\documentclass[10pt, a4paper]{amsart}
\usepackage{amsmath, amssymb,amsthm,amsfonts,amsmath,euscript,mathtools,relsize}
\usepackage[extra]{tipa}
\usepackage{lettrine}
\usepackage[Sonny]{fncychap}
\usepackage[english]{babel}
\usepackage{amsmath, amssymb,amsthm}
\usepackage[all]{xy}
\usepackage{tabularx}
\usepackage[applemac]{inputenc}
\usepackage{graphicx}
\usepackage[dvipsnames]{xcolor}
\usepackage{hyperref}
\usepackage[applemac]{inputenc}
\usepackage{enumitem}
\usepackage{geometry}
\usepackage{prerex}
\usetikzlibrary{fit}
\usepackage{pdflscape}
\usepackage{tikz-cd} 
\usepackage{mathtools}

\makeatletter
\def\oversortoftilde#1{\mathop{\vbox{\m@th\ialign{##\crcr\noalign{\kern3\p@}%
      \sortoftildefill\crcr\noalign{\kern3\p@\nointerlineskip}%
      $\hfil\displaystyle{#1}\hfil$\crcr}}}\limits}

\def\sortoftildefill{$\m@th \setbox\z@\hbox{$\braceld$}%
  \braceld\leaders\vrule \@height\ht\z@ \@depth\z@\hfill\braceru$}

\makeatother

\usepackage{mathrsfs}

\DeclareMathOperator{\B}{\overline{B}}

\DeclareMathOperator{\Ker}{Ker}

\DeclareMathOperator{\ord}{ord}
\DeclareMathOperator{\Gal}{Gal}

\DeclareMathOperator{\SL}{SL}

\DeclareMathOperator{\PSL}{PSL}

\newcommand{\cf}{\textit{cf. }}
\newcommand{\ie}{\textit{i.e. }}

\newcommand{\eg}{\textit{e.g. }}
\theoremstyle{definition}

\newtheorem*{notation}{Notation}

\newtheorem{rem}{Remark}

\theoremstyle{plain}
\newtheorem{thm}{Theorem}[section]
\newtheorem{defn}[thm]{Definition}
\newtheorem{conj}[thm]{Conjecture}
\newtheorem{lem}[thm]{Lemma}
\newtheorem{corr}[thm]{Corollary}

\newtheorem{prop}[thm]{Proposition}
% I made some change here Jun Wang

% I add some newcommand here : Jun Wang
% will change to mathbf

\newcommand{\Z}{\mathbf Z}
\newcommand{\Q}{\mathbf Q}

\newcommand{\C}{\mathbf C}

\newcommand{\Zp}{\mathbf Z_{p}}
\newcommand{\Qp}{\mathbf Q_{p}}
\newcommand{\Fp}{\mathbf{F}_{p}}
\renewcommand{\O}{\mathcal O}
\newcommand{\T}{\mathbb{T}}

\def\makeop#1{\expandafter\def\csname#1\endcsname
  {\mathop{\rm #1}\nolimits}\ignorespaces}
\makeop{Hom}   \makeop{End}   \makeop{Aut}   \makeop{Isom}  \makeop{Pic}
\makeop{ord}   \makeop{Char}  \makeop{Div}   \makeop{Lie}

%%%%%%%%%%%%%%%%%%%%%%%%%%%%%%%%%%%%%%%%%%%%%%%%%%
%%%%%%		Debut du document 	    %%%%%%
%%%%%%%%%%%%%%%%%%%%%%%%%%%%%%%%%%%%%%%%%%%%%%%%%%
\pagestyle{plain}

\title{On a conjecture of Sharifi and Mazur's Eisenstein ideal}
\author{Emmanuel Lecouturier and Jun Wang}

\begin{document}
\maketitle

\begin{abstract}
Let $N$ and $p$ be prime numbers $\geq 5$ such that $p$ divides $N-1$. Let $I$ be Mazur's Eisenstein ideal of level $N$ and $H_+$ be the plus part of $H_1(X_0(N), \Zp)$ for the complex conjugation. We give a conjectural explicit description of the group $I\cdot H_+/I^2\cdot H_+$ in terms of the second $K$-group of the cyclotomic field $\Q(\zeta_N)$. We prove that this conjecture follows from a conjecture of Sharifi about some Eisenstein ideal of level $\Gamma_1(N)$. Following the work of Fukaya--Kato, we prove partial results on Sharifi's conjecture. This allows us to prove partial results on our conjecture.
\end{abstract}
\tableofcontents

\section{Introduction}\label{introduction}
Let $N$ and $p$ be prime numbers $\geq 5$ such that $p$ divides $N-1$. Let $\T=\Zp[T_n, n\geq 1]$ be the Hecke algebra acting on $S_2(\Gamma_0(N), \Zp)$, the space of cusp forms of weight $2$ and level $\Gamma_0(N)$ over $\Zp$. Let $I \subset \T$ be Mazur's Eisenstein ideal \cite{Mazur_Eisenstein}, \ie the ideal generated by the Hecke operators $T_{\ell}-\ell-1$ for primes $\ell \neq N$ and by $T_N-1$. We denote by $H$ (resp. $H_+$) the singular homology group $H_1(X_0(N), \Zp)$ (resp. the subgroup fixed by the complex conjugation in $H_1(X_0(N), \Zp)$). 

Barry Mazur constructed an explicit isomorphism $H_+/I\cdot H_+ \xrightarrow{\sim} (\Z/N\Z)^{\times} \otimes_{\Z} \Zp$. This fundamental construction is used in numerous applications, \eg \cite{Merel_accouplement}, \cite{Merel_cyclotomic}, \cite{Lecouturier_MT}, \cite{Lecouturier_higher_Eisenstein}. This also allowed Mazur to give a criterion for the Hecke operator $T_{\ell}-\ell-1$ to locally generate $I$ \cite[Theorem 18.10]{Mazur_Eisenstein}.

The results of Mazur imply that the group $I\cdot H_+/I^2\cdot H_+$ is canonically isomorphic to $\left((\Z/N\Z)^{\times} \otimes_{\Z} \Zp\right)^{\otimes 2}$. Lo\"ic Merel asked for an explicit description of this isomorphism \cite[p. 104 ``\`A quoi est \'egal...?'']{Merel_cyclotomic}. Our paper can be considered as an attempt to answer his question. We now give more details about our work.

Let $\xi : \Zp[(\Z/N\Z)^{\times}] \rightarrow H_+$ be the  map given by $$\xi([a])=\frac{1}{2}\cdot(\{\infty, \frac{a}{N}\}+\{\infty, \frac{-a}{N}\})$$
where if $\alpha$, $\beta$ $\in \mathbf{P}^1(\Q)$, we denote by $\{\alpha, \beta\}$ the homology class of the geodesic path connecting $\alpha$ and $\beta$ in the upper-half plane (a similar notation applies to other modular curves). This is the plus part of the Manin map \cite[\S 1.5]{Manin} composed with the Atkin--Lehner involution (we make this modification to be consistent with \cite{Fukaya_Kato}). The map $\xi$ is well-known to be surjective (\cf \cite[Proposition 3]{Merel_accouplement}).

\begin{thm}[Mazur]\cite[Proposition II.18.8]{Mazur_Eisenstein}
There is a unique group isomorphism
$$H_+/I\cdot H_+ \xrightarrow{\sim} (\Z/N\Z)^{\times} \otimes_{\Z} \Zp$$
sending $\xi([a])$ to $a \otimes 1$, for all $a \in (\Z/N\Z)^{\times}$.
\end{thm}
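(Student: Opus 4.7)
The plan is to construct the inverse map $\phi : (\Z/N\Z)^\times \otimes \Zp \to H_+/IH_+$ sending $a \otimes 1 \mapsto \xi([a]) \bmod IH_+$, check that it is well-defined, and conclude by a cardinality comparison. Uniqueness of such an isomorphism is automatic, because $\xi$ surjects onto $H_+$ and the values of $\phi$ on the $\xi([a])$ are prescribed.

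A homomorphism from $(\Z/N\Z)^\times$ into a $\Zp$-module extends uniquely to $(\Z/N\Z)^\times \otimes \Zp$, so the substantive point in producing $\phi$ is the multiplicative relation
\[
\xi([ab]) \equiv \xi([a]) + \xi([b]) \pmod{IH_+} \quad \text{for all } a,b,
\]
which in particular forces $\xi([1]) \equiv 0$ by setting $a = b = 1$. The natural tool is the Hecke action. For a prime $\ell \neq N$, the standard formula on modular symbols
\[
T_\ell \{\infty, a/N\} = \{\infty, \ell a / N\} + \sum_{i=0}^{\ell - 1} \{\infty, (a + iN)/(\ell N)\},
\]
combined with the reductions of each $(a+iN)/(\ell N)$ modulo $\Gamma_0(N)$ into the Manin basis $\{\infty, b/N\}$, should yield an identity in $H_+$ relating $T_\ell \xi([a])$ to $\xi([\ell a])$, $\xi([a\ell^{-1}])$ and an element of $IH_+$. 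Since $T_\ell - \ell - 1 \in I$, this identity becomes a $\Zp$-linear relation among the $\xi$-values modulo $IH_+$, and by letting $a$ vary and $\ell$ range through primes $\neq N$ (which generate $(\Z/N\Z)^\times$) one extracts the desired multiplicativity.

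To finish, I would invoke Mazur's companion structure result $\T/I \cong \Zp/(N-1)\Zp$. The Hecke identity above shows in addition that $H_+/IH_+$ is cyclic as a $\T/I$-module (generated, for instance, by $\xi([1])$), and hence cyclic as a $\Zp$-module of order dividing $(N-1)_p$. Since $\phi$ is a surjection (the $\xi([a])$ already span $H_+$) and $(\Z/N\Z)^\times \otimes \Zp$ is cyclic of order exactly $(N-1)_p$, $\phi$ must be an isomorphism.

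The main obstacle is the Hecke computation in the second paragraph. Of the $\ell + 1$ terms in $T_\ell \{\infty, a/N\}$, the term $\{\infty, \ell a/N\}$ gives $\xi([\ell a])$ immediately, and for the unique index $i^* \in \{0,\ldots,\ell-1\}$ with $\ell \mid a + i^* N$ the symbol $\{\infty, (a + i^* N)/(\ell N)\}$ reduces to $\xi([a \ell^{-1}])$; but the remaining $\ell - 1$ indices give loops $\{\infty, \gamma \cdot \infty\}$ with $\gamma \in \Gamma_0(N)$, which must be expanded in the Manin basis via continued-fraction reductions. Controlling the combinatorics of these loop contributions so that the final identity assembles cleanly modulo $IH_+$, rather than producing a spurious obstruction to the multiplicativity of $a \mapsto \xi([a])$, is where the Eisenstein nature of $I$ is genuinely used, and I expect this to be the most technical step.
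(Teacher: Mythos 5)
The paper offers no proof of this statement --- it is quoted directly from Mazur (Proposition II.18.8) --- so your argument has to stand on its own, and its final step does not. Everything your Hecke computation would deliver (granting it) is an \emph{upper} bound on $H_+/I\cdot H_+$: the multiplicativity $\xi([ab])\equiv\xi([a])+\xi([b])$ makes $\phi$ well defined and surjective, and shows $H_+/I\cdot H_+$ is cyclic of order dividing $p^t$ where $t=\ord_p(N-1)$. But a surjection from $\Z/p^t\Z$ onto a cyclic group of order \emph{dividing} $p^t$ need not be injective; a priori one could have $I\cdot H_+=H_+$, in which case $\phi$ is the zero map onto the trivial group and is still ``surjective.'' What is missing is the lower bound $\#(H_+/I\cdot H_+)\geq p^t$, equivalently the existence of the homomorphism in the direction the theorem actually asserts, $H_+/I\cdot H_+\to(\Z/N\Z)^{\times}\otimes_{\Z}\Zp$ with $\xi([a])\mapsto a\otimes 1$, which would split $\phi$ and force injectivity. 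This is where the real content of Mazur's result sits: one must either check that the \emph{full} presentation of $H_+/I\cdot H_+$ as a quotient of $\Zp[(\Z/N\Z)^{\times}]$ (Manin's two- and three-term relations together with the Eisenstein relations) is compatible with $[a]\mapsto a\otimes 1$, or invoke Mazur's structural theorems ($\T/I\simeq\Z/p^t\Z$ plus the freeness of $H_+$ over the completed Hecke algebra at the Eisenstein maximal ideal, giving $H_+/I\cdot H_+\simeq\T/I$). Neither appears in your write-up, and neither is a formality.

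Two further points. The element $\xi([1])$ cannot generate $H_+/I\cdot H_+$: once multiplicativity holds, $\xi([1])=\xi([1\cdot 1])=2\xi([1])$ forces $\xi([1])\equiv 0$; the generator should be $\xi([g])$ for $g$ a primitive root modulo $N$, and cyclicity over $\Zp$ then comes for free without appealing to the $\T/I$-module structure. Finally, the continued-fraction computation you defer is not a routine verification: the $\ell-1$ ``loop'' terms do not vanish individually, and what multiplicativity requires (given $T_\ell\equiv\ell+1$ on the quotient) is that their total contribution be $(\ell-1)\xi([a])$ modulo $I\cdot H_+$. That identity is essentially the computation carried out by Merel in \cite{Merel_accouplement}, and it constitutes a substantial part of the proof rather than a technical afterthought.
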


In other words, we have:
\begin{equation}\label{Mazur_I/I^2}
I\cdot H_+ = \{\sum_{a \in (\Z/N\Z)^{\times}} \lambda_a\cdot \xi([a]), \lambda_a \in \Zp,  \text{ such that } \sum_{a \in (\Z/N\Z)^{\times}} a \otimes \lambda_a = 0 \in (\Z/N\Z)^{\times} \otimes_{\Z} \Zp\} \text{ .}
\end{equation}

To describe $I^2\cdot H_+$, we need to introduce some more notation. If $A$ is a ring, we denote by $K_2(A)$ the second $K$-group as defined by Quillen. We let $\mathcal{K} = K_2(\Z[\zeta_N, \frac{1}{Np}]) \otimes_{\Z} \Zp$, where $\zeta_N$ is a primitive $N$th root of unity in an algebraic closure of $\Q$. There is an action of the group ring $\Lambda:= \Zp[\Gal(\Q(\zeta_N)/\Q)]$ on $\mathcal{K}$. 
In this paper, we choose the ``inverse'' of the natural action. Thus, we have $$\sigma_a\cdot \{1-\zeta_N^u, 1-\zeta_N^v\} =  \{1-\zeta_N^{a^{-1}u}, 1-\zeta_N^{a^{-1}v}\}$$ where if $a\in (\Z/N\Z)^{\times}$ then $\sigma_a \in \Gal(\Q(\zeta_N)/\Q)$ is characterized by $\sigma_a(\zeta_N)=\zeta_N^a$.

We denote by $J$ the augmentation ideal of $\Lambda$. If $x$, $y$ $\in \Z[\zeta_N, \frac{1}{Np}]^{\times}$, let $\langle x,y\rangle$ be the associated Steinberg symbol in $\mathcal{K}$.

\begin{conj}\label{conjecture_I/I^2}
There is a group isomorphism
$$I\cdot H_+/I^2\cdot H_+ \xrightarrow{\sim} J\cdot \mathcal{K}/J^2\cdot \mathcal{K}$$
sending $\sum_{a \in (\Z/N\Z)^{\times}} \lambda_a\cdot \xi([a])$ to $$\sum_{a \in (\Z/N\Z)^{\times}} \lambda_a\cdot (\langle 1-\zeta_N^a, 1-\zeta_N\rangle-\frac{1}{2}\cdot ([\sigma_a]-1)\cdot \langle 1-\zeta_N^a, 1-\zeta_N\rangle) \text{ .}$$

\end{conj}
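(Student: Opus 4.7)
My plan is to deduce Conjecture \ref{conjecture_I/I^2} from Sharifi's conjecture at level $\Gamma_1(N)$, as promised in the abstract. Let $\mathcal{H}$ denote a suitable Eisenstein-localized version of the plus part of $H_1(X_1(N), \Zp)$, and let $\mathcal{I}$ denote the Eisenstein ideal of the corresponding Hecke algebra at level $\Gamma_1(N)$. Sharifi's conjecture asserts that the map $\varpi$ sending a Manin symbol $[c:d]^+$ at level $\Gamma_1(N)$ to the Steinberg symbol $\{1-\zeta_N^c, 1-\zeta_N^d\}$ induces a $\Lambda$-equivariant isomorphism $\mathcal{H}/\mathcal{I}\mathcal{H} \xrightarrow{\sim} \mathcal{K}/J\mathcal{K}$.

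The first step is to descend from level $\Gamma_1(N)$ to level $\Gamma_0(N)$. The trace map $X_1(N) \to X_0(N)$ identifies $H_+$ with the $\Lambda$-coinvariants $\mathcal{H}/J\mathcal{H}$ and matches $\mathcal{I}$ with $I$. Granting Sharifi's conjecture, applying $\otimes_\Lambda J/J^2$ to $\mathcal{H}/\mathcal{I}\mathcal{H} \cong \mathcal{K}/J\mathcal{K}$ would produce a canonical isomorphism $I H_+/I^2 H_+ \cong J\mathcal{K}/J^2\mathcal{K}$, provided one checks that the $J$-adic and $\mathcal{I}$-adic filtrations on $\mathcal{H}$ agree to second order. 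Concretely, for each prime $\ell \neq N$ one needs the Hecke operator $T_\ell - \ell - 1$ to act on $\mathcal{H}$ the same way as the Iwasawa element $\ell[\sigma_\ell]^{-1} - 1$ modulo $\mathcal{I}\cdot J\cdot \mathcal{H}$, a second-order refinement of the standard Eisenstein congruences.

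To recover the explicit formula, one traces an element $\sum_a \lambda_a \xi([a]) \in IH_+$ through these identifications. A natural lift of $\xi([a])$ to $\mathcal{H}$ is a symmetric Manin symbol at level $\Gamma_1(N)$, which under $\varpi$ maps to $\frac{1}{2}\bigl(\{1-\zeta_N^a, 1-\zeta_N\} + \langle -1\rangle \cdot \{1-\zeta_N^a, 1-\zeta_N\}\bigr)$. Using $\langle -1\rangle = [\sigma_{-1}]$ and the augmentation-zero constraint $\sum_a \lambda_a \otimes a = 0$ that characterizes $IH_+$, one can rewrite $[\sigma_{-1}]-1$ against the $\lambda_a$'s modulo $J^2$ to produce the advertised correction term $-\frac{1}{2}([\sigma_a]-1)\langle 1-\zeta_N^a, 1-\zeta_N\rangle$.

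The main obstacle is precisely the second-order compatibility of filtrations alluded to above: Sharifi's conjecture only controls $\mathcal{H}/\mathcal{I}\mathcal{H}$, whereas the computation of $IH_+/I^2H_+$ requires information about $\mathcal{H}$ modulo $\mathcal{I}^2\mathcal{H}$ after multiplication by $J$. Establishing this refinement should require a Hecke-equivariant enhancement of Sharifi's map in the spirit of Fukaya--Kato, together with careful book-keeping of the plus-part symmetrization and of the interaction between $[\sigma_{-1}]$ and the augmentation ideal modulo $J^2$. Once these technical points are settled, the remaining manipulations reduce to standard identities in $K_2$, in particular the antisymmetry $\{x,y\} + \{y,x\} = 0$.
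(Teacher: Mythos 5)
Your overall strategy --- deduce the conjecture from Sharifi's conjecture by descending from level $\Gamma_1(N)$ to level $\Gamma_0(N)$ --- is the same as the paper's (Theorem \ref{thm_conj}), but you have misidentified where the real difficulty lies, and the step you flag as the ``main obstacle'' is not the one the paper needs to overcome. You claim one must control $\mathcal{H}$ modulo $\mathcal{I}^2\mathcal{H}$, i.e.\ a second-order refinement of Sharifi's map. The paper never needs this. The key mechanism is Mazur's Lemma II.18.7: the pushforward of \emph{cuspidal} homology $H_1(X_1(N),\Zp)_+$ (or of $H^{(p)}_+$ at the intermediate level $\Gamma_1^{(p)}(N)$) to level $\Gamma_0(N)$ has image exactly $I\cdot H_+$, not all of $H_+$. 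One power of $I$ is therefore absorbed for free by working with cuspidal rather than relative homology upstairs; the second power comes from applying the upstairs Eisenstein ideal $I_0$ once, and the refined Hida control (Proposition \ref{odd_modSymb_refined_Hida}) identifies the kernel of the pushforward with $J\cdot H^{(p)}_+$. This yields $I\cdot H_+/I^2\cdot H_+\simeq H^{(p)}_+/(I_0+J^{(p)})\cdot H^{(p)}_+$ (Proposition \ref{odd_modSymb_construction}) using only \emph{first-order} information at level $\Gamma_1^{(p)}(N)$. Pursuing your route of a ``second-order compatibility of filtrations'' would require an enhancement of Sharifi's conjecture that is not available and is not assumed.

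The second gap is that your isomorphism cannot be established by the formal tensoring $\otimes_{\Lambda}J/J^2$ you propose: even granting that $\varpi$ induces a surjection $H^{(p)}_+/I_0\cdot H^{(p)}_+\twoheadrightarrow J^{(p)}\cdot\mathcal{K}^{(p)}$ (which the paper extracts from the Eisenstein property via the identity $\tilde{\varpi}^{(p)}((T_\ell-\ell-\langle\ell\rangle)x)=([\ell]-1)(\ell-1)\tilde{\varpi}^{(p)}(x)$ and Dirichlet's theorem), upgrading it to an isomorphism requires matching cardinalities. This rests on two nontrivial structure results you omit entirely: the computation $\tilde{\mathbb{T}}^{(p)}/\tilde{I}_0\simeq\Lambda^{(p)}/(\zeta^{(p)})$ together with the extended winding homomorphism giving $H^{(p)}_+/I_0\cdot H^{(p)}_+\simeq\Lambda^{(p)}/(\zeta^{(p)},\nu^{(p)})$ (Theorems \ref{odd_modSymb_Eisenstein_ideal_X1} and \ref{odd_modSymb_structure_H_1}), and the Greither--Popescu computation of the Fitting ideal showing $\mathcal{K}^{(p)}\simeq\Lambda^{(p)}/(\zeta^{(p)})$ (Proposition \ref{Prop_K_theory}). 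Note also that your stated hypothesis is already the isomorphism form of Sharifi's conjecture, whereas the paper assumes only the Eisenstein annihilation property and \emph{derives} the isomorphism from these structure results. Finally, your account of the explicit formula is off: a single $\xi([a])$ does not lift to cuspidal homology (only combinations in $I\cdot H_+$ do, and the paper lifts the generators $\xi([xy]-[x]-[y])$ to $[x,y^{-1}]^*_+-[x,1]^*_++[y^{-1},1]^*_+$), and the correction term $-\tfrac{1}{2}([\sigma_a]-1)\langle 1-\zeta_N^a,1-\zeta_N\rangle$ arises from rewriting $\langle 1-\zeta_N^x,1-\zeta_N^{y^{-1}}\rangle=[y^{-1}]\cdot\langle 1-\zeta_N^{xy},1-\zeta_N\rangle$ via $\Lambda$-equivariance and the residue symbol modulo $J^2$, not from the $\langle-1\rangle$-symmetrization as you suggest.
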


\begin{rem}\label{remark_augmentation_K}
By Proposition \ref{Prop_K_theory} (\ref{Prop_K_theory_augmentation}), the condition 
$$\sum_{a \in (\Z/N\Z)^{\times}} a \otimes \lambda_a = 0 \in (\Z/N\Z)^{\times} \otimes_{\Z} \Zp$$
is equivalent to
$$\sum_{a \in (\Z/N\Z)^{\times}} \lambda_a\cdot \langle1-\zeta_N^a, 1-\zeta_N\rangle \in J \cdot \mathcal{K} \text{ .}$$
\end{rem}

We now relate Conjecture \ref{conjecture_I/I^2} to a conjecture of Romyar Sharifi. Let $X_1(N)$ be the compact modular curve of level $\Gamma_1(N)$. Let $C_{\infty}$ be the set of cusps of $X_1(N)$ above the cusp $\Gamma_0(N)\infty$ of $X_0(N)$. We denote by $H_1(X_1(N), C_{\infty}, \Zp)$ the singular homology group of $X_1(N)$ relative to $C_{\infty}$ (with coefficients in $\Zp$). Let $H_1(X_1(N), C_{\infty}, \Zp)_+$ be the subgroup of $H_1(X_1(N), C_{\infty}, \Zp)$ fixed by the complex conjugation.

There is a surjective map (\cf Proposition \ref{generation_Manin_C_0^{(p^r)}}) 
$$\xi_1 : \Z[((\Z/N\Z)^{\times})^2]\rightarrow H_1(X_1(N), C_{\infty}, \Zp)_+$$
given by $$\xi_1([(u,v)]) = \frac{1}{2}\cdot(\{\frac{-d}{bN}, \frac{-c}{aN}\}+\{\frac{d}{bN}, \frac{c}{aN}\})$$
where $\begin{pmatrix}a & b \\ c & d \end{pmatrix} \in \SL_2(\Z)$ is such that $c \equiv u \text{ (modulo }N\text{)}$ and $d \equiv v \text{ (modulo }N\text{)}$. 

One can prove (\cf Proposition \ref{Prop_existence_varpi}) that the map $\Z[((\Z/N\Z)^{\times})^2]\rightarrow \mathcal{K}$ sending $[(u,v)]$ to $\langle 1-\zeta_N^u, 1-\zeta_N^v\rangle$ factors through $\xi_1$, thus giving a map
$$\tilde{\varpi} : H_1(X_1(N), C_{\infty}, \Zp)_+ \rightarrow \mathcal{K} \text{ .}$$

\begin{conj}[Sharifi]\label{Sharifi_conjecture}
The map $\tilde{\varpi}$ is annihilated by the Hecke operators $T_{\ell}-\ell\cdot \langle \ell \rangle -1$ for primes $\ell \neq N$ and by $T_{N}-1$ (here the Hecke operators are the usual ones, induced by Albanese functoriality).
\end{conj}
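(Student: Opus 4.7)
The plan is to adapt the strategy of Fukaya--Kato, which realizes both sides of Sharifi's conjecture inside a common geometric object built from Siegel units and Beilinson--Kato zeta elements on modular curves. The target $\mathcal{K}$ is to be regarded as a Hecke module on which $T_\ell$ acts as $\ell \langle \ell \rangle + 1$ and $T_N$ acts as $1$, and the goal is to promote $\tilde{\varpi}$ to a Hecke-equivariant map for this action. Since $\xi_1$ is surjective, it suffices to verify the identity on Manin symbols $\xi_1([(u,v)])$.

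First, I would reinterpret $\tilde{\varpi}$ geometrically. The proof of Proposition \ref{Prop_existence_varpi} exhibits the Steinberg symbol $\langle 1 - \zeta_N^u, 1 - \zeta_N^v \rangle$ as the specialization at a cusp above $\Gamma_0(N)\infty$ of a symbol $\{g_u, g_v\}$ in $K_2$ of the open curve $Y_1(N)$, where $g_u, g_v$ are Siegel units indexed by $u, v \in (\Z/N\Z)^\times$. Consequently $\tilde{\varpi}$ factors as ``Manin symbol $\mapsto$ Steinberg symbol of Siegel units in $K_2(Y_1(N)) \mapsto$ tame residue at the cusp'', and each of these maps is natural with respect to Hecke correspondences on $X_1(N)$.

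Second, I would verify Hecke-equivariance at the level of Siegel units. The units $g_u$ satisfy classical norm-distribution relations with respect to the degeneracy maps defining $T_\ell$ for $\ell \nmid N$, and a somewhat involved manipulation with Steinberg relations should yield $T_\ell \{g_u, g_v\} = (\ell \langle \ell \rangle + 1)\{g_u, g_v\}$ in a suitable quotient of $K_2(Y_1(N))$; upon taking the tame residue at the cusps above $\Gamma_0(N)\infty$ this descends to the conjectural identity in $\mathcal{K}$. For $T_N - 1$, one uses that the action of $T_N$ on cohomology is governed by the Eichler--Shimura congruence at $N$ and becomes, when restricted to the cusps above $\infty$ and composed with the tame residue, the identity.

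The main obstacle is the Hecke operator $T_p$ at the residue characteristic: controlling how $T_p$ interacts with the $p$-adic filtration on $K_2$ of modular curves requires serious input from $p$-adic Hodge theory and Kato's explicit reciprocity law, and it is the source of the auxiliary hypotheses (for instance on the Galois representation or on Vandiver-type statements) under which only partial results can currently be obtained; this is exactly the reason why the paper proves a conditional/partial version rather than the full statement. A secondary, more technical difficulty is bookkeeping: one must carefully check that the sign conventions and the averaging over complex conjugation built into $\xi_1$ are compatible with those on the $K_2$ side, in particular with the ``inverse'' Galois action chosen on $\mathcal{K}$.
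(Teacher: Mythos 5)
The statement you are asked to prove is stated in the paper as a \emph{conjecture}, and the paper contains no proof of it; it only establishes partial results, chiefly Theorem \ref{thm_Sharifi_mod_norms} (the statement modulo $\nu\cdot\mathcal{K}$). Your outline correctly identifies the Fukaya--Kato strategy that the paper follows, but the decisive step in your second paragraph --- that the norm-distribution relations for Siegel units together with ``a somewhat involved manipulation with Steinberg relations should yield'' the integral Hecke-equivariance of $[u,v]^*\mapsto g_{0,\frac{u}{N}}\cup g_{0,\frac{v}{N}}$ in $H^2_{\textnormal{\'et}}(\mathcal{Y}_1(N)\otimes\Z[1/p],\Zp(2))$ --- is precisely the open Conjecture \ref{Eisensein_rho'} of the paper, which implies Conjecture \ref{Sharifi_conjecture} via the Fukaya--Kato specialization map $\infty(0,1)$ (Corollary \ref{Cor2}). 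Asserting it ``should'' follow is not a proof: after applying $\infty(0,1)$, the equivariance for $T_\ell$ amounts to explicit identities among Steinberg symbols $\langle 1-\zeta_N^a,1-\zeta_N^b\rangle$ in $\mathcal{K}$; the one for $\ell=5$ is written out in Remark \ref{Remark_identity_Buisioc}, and the authors state explicitly that they were unable to prove it (the Busuioc--Sharifi manipulations succeed only for $\ell\in\{2,3\}$).

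You also misdiagnose where the genuine obstruction lies. The operator $T_p$ is not the problem: the paper controls it by showing that $\eta_p=1-T_p^*+p\langle p\rangle^{-1}$ acts injectively on $H^1(\Z[\tfrac{1}{Np}],H^1_{\textnormal{\'et}}(X_1(N))(2)\otimes\Qp)$ using the Weil bounds (Lemma \ref{Prop10}), and handles integrality via crystallinity (Lemma \ref{Prop12}); no Vandiver-type hypothesis enters Theorem \ref{thm_Sharifi_mod_norms}. The actual obstruction is the bottom graded piece of the Hochschild--Serre filtration: the Beilinson--Kato class is only controlled modulo $H^2(\Z[\tfrac{1}{Np}],\Zp(2))$, whose image in $\mathcal{K}$ is exactly $\nu\cdot\mathcal{K}$. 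In the Fukaya--Kato setting ($p$ dividing the level, ordinary parts) this group vanishes, but here it is isomorphic to $(\Z/N\Z)^{\times}\otimes_{\Z}\Zp\neq 0$, which is why the method yields only the statement modulo $\nu\cdot\mathcal{K}$ and why the full conjecture remains open. Your proposal, as written, does not supply the missing idea needed to control this norm part.
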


\begin{rem}\label{Remark_Sharifi_Venkatesh}
Sharifi announced in a talk at the workshop ``Eisenstein ideal and Iwasawa theory'' held in Beijing in June 2019 that, together with Akshay Venkatesh, he was able to prove that the restriction of $\tilde{\varpi}$ to $H_1(X_1(N), \Zp)$ is annihilated by the Hecke operators $T_{\ell}-\ell\cdot \langle \ell \rangle -1$ for primes $\ell \neq N$. 
\end{rem}

\begin{rem}\label{Remark_identity_Buisioc}
Following the techniques of Cecilia Busuioc and Sharifi, one can prove that $\tilde{\varpi}$ is annihilated by the Hecke operators $T_{\ell}-\ell\cdot \langle \ell \rangle -1$ for $\ell \in \{2,3\}$ \cite[Theorem 1.2]{Busuioc}. It seems however that such an approach does not work for $\ell \geq 5$. For $\ell=5$, we need to prove that for any $u$, $v$ $\in (\Z/N\Z)^{\times}$, we have in $\mathcal{K}$:
\begin{align*}
&\langle 1-\zeta_N^{5u}, 1-\zeta_N^{v} \rangle + \langle 1-\zeta_N^{5u}, 1-\zeta_N^{2u+v} \rangle - \langle 1-\zeta_N^{u-2v}, 1-\zeta_N^{2u+v} \rangle+\langle 1-\zeta_N^{u-2v}, 1-\zeta_N^{5v} \rangle \\& + \langle 1-\zeta_N^{5u}, 1-\zeta_N^{4u+v} \rangle - \langle 1-\zeta_N^{-3u-2v}, 1-\zeta_N^{4u+v} \rangle +\langle 1-\zeta_N^{3u+2v}, 1-\zeta_N^{2u+3v} \rangle  \\& -\langle 1-\zeta_N^{-u-4v}, 1-\zeta_N^{2u+3v} \rangle +\langle 1-\zeta_N^{u+4v}, 1-\zeta_N^{5v} \rangle  + \langle 1-\zeta_N^{5u}, 1-\zeta_N^{-2u+v} \rangle \\& - \langle 1-\zeta_N^{u+2v}, 1-\zeta_N^{-2u+v} \rangle   + \langle 1-\zeta_N^{u+2v}, 1-\zeta_N^{5v} \rangle  + \langle 1-\zeta_N^{5u}, 1-\zeta_N^{-4u+v} \rangle \\& - \langle 1-\zeta_N^{-3u+2v}, 1-\zeta_N^{-4u+v} \rangle + \langle 1-\zeta_N^{3u-2v}, 1-\zeta_N^{-2u+3v} \rangle  - \langle 1-\zeta_N^{-u+4v}, 1-\zeta_N^{-2u+3v} \rangle  \\& + \langle 1-\zeta_N^{u-4v}, 1-\zeta_N^{5v} \rangle + \langle 1-\zeta_N^{u}, 1-\zeta_N^{5v} \rangle\\& =\langle 1-\zeta_N^{5u}, 1-\zeta_N^{5v}\rangle + 5\cdot \langle 1-\zeta_N^{u}, 1-\zeta_N^{v} \rangle \text{ .}
\end{align*}
We were not able to prove this identity.
\end{rem}

A study of the kernel and image of the natural map $H_1(X_1(N), C_{\infty}, \Zp)_+ \rightarrow H_+$ leads to the following result.

\begin{thm}\label{thm_conj}
Conjecture \ref{Sharifi_conjecture} implies Conjecture \ref{conjecture_I/I^2}.
\end{thm}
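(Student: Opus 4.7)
Proof plan. Assume Conjecture \ref{Sharifi_conjecture}. Set $\mathcal{H} := H_1(X_1(N), C_\infty, \Zp)_+$ and let $\pi_* : \mathcal{H} \to H_+$ denote the natural pushforward. The plan is to construct, via $\tilde{\varpi}$, a map $\Phi : I\cdot H_+/I^2 \cdot H_+ \to J\cdot\mathcal{K}/J^2\cdot\mathcal{K}$ realizing the formula of Conjecture \ref{conjecture_I/I^2}. Using the matrix $\begin{pmatrix} 1 & 0 \\ a & 1\end{pmatrix} \in \SL_2(\Z)$ one checks that $\pi_*(\xi_1([(a,1)])) = \xi([a])$, so any $y = \sum_a \lambda_a \xi([a]) \in I\cdot H_+$ (with $\sum_a a\otimes\lambda_a = 0$ by Mazur) admits the lift $\tilde{y} := \sum_a \lambda_a \xi_1([(a,1)]) \in \mathcal{H}$. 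By Proposition \ref{Prop_existence_varpi} and Remark \ref{remark_augmentation_K}, the element $\tilde{\varpi}(\tilde{y}) = \sum_a \lambda_a \langle 1-\zeta_N^a, 1-\zeta_N\rangle$ belongs to $J\cdot\mathcal{K}$ and will provide the main term of $\Phi(y)$.

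The heart of the proof is Sharifi's conjecture combined with the identity
$$T_\ell - \ell - 1 = (T_\ell - \ell\langle\ell\rangle - 1) + \ell(\langle\ell\rangle - 1),$$
its companion $T_N - 1$, and the $\Lambda$-equivariance of $\tilde{\varpi}$ (the diamond $\langle\ell\rangle$ on $\mathcal{H}$ corresponds to $\sigma_\ell$ on $\mathcal{K}$). Applied to any $\tilde{x} \in \mathcal{H}$, Sharifi kills the first summand and yields $\tilde{\varpi}((T_\ell - \ell - 1)\tilde{x}) = \ell(\sigma_\ell - 1)\tilde{\varpi}(\tilde{x}) \in J\cdot\mathcal{K}$; iterating, the image under $\tilde{\varpi}$ of any Hecke-theoretic lift of an element of $I^k\cdot H_+$ lies in $J^k\cdot\mathcal{K}$. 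Comparing the naive lift $\tilde{y}$ with a Hecke-theoretic lift, their difference lies in $\ker \pi_*$; the paper's promised ``study of the kernel and image of $\pi_*$'' must identify $\ker \pi_*$ precisely enough---essentially as generated, modulo pieces that push forward into $I\cdot H_+$, by expressions of the form $(\langle d\rangle - 1)\tilde{h}$---so that $\tilde{\varpi}$ sends this ambiguity into $J\cdot J\cdot\mathcal{K} = J^2\cdot\mathcal{K}$. This will make $\Phi(y) := \tilde{\varpi}(\tilde{y}) \bmod J^2\mathcal{K}$ well-defined on $I\cdot H_+$ and will ensure $\Phi(I^2\cdot H_+) = 0$.

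To identify $\Phi$ with the formula of Conjecture \ref{conjecture_I/I^2}, note that the main term $\sum_a \lambda_a \langle 1-\zeta_N^a, 1-\zeta_N\rangle$ is immediate from the construction, while the correction $-\tfrac{1}{2}([\sigma_a]-1)\langle 1-\zeta_N^a, 1-\zeta_N\rangle$ arises from replacing the asymmetric lift $\xi_1([(a,1)])$ by a symmetrized variant reflecting the $\pm 1$-averaging inherent in both $\xi$ and $\xi_1$; by $\Lambda$-equivariance and the augmentation hypothesis $\sum_a a \otimes \lambda_a = 0$, this modification contributes precisely that term modulo $J^2\mathcal{K}$. Surjectivity of $\Phi$ is clear from the image of the $\xi([a])$; bijectivity then follows from a rank comparison using Mazur's isomorphism $I\cdot H_+/I^2\cdot H_+ \simeq ((\Z/N\Z)^\times \otimes \Zp)^{\otimes 2}$ and the analogous description of $J\cdot\mathcal{K}/J^2\cdot\mathcal{K}$ provided by Proposition \ref{Prop_K_theory}. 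The main obstacle is the well-definedness step in the second paragraph: one must control $\ker \pi_*$ at the level of Manin symbols at $\Gamma_1(N)$ versus $\Gamma_0(N)$ precisely enough that, combined with Sharifi's conjecture, the ambiguity of the lift is killed modulo $J^2\cdot\mathcal{K}$.
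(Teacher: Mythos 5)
There is a genuine gap at the step you yourself flag as the main obstacle, and the heuristic you offer for closing it does not work. By the refined Hida control (Proposition \ref{odd_modSymb_refined_Hida}), the kernel of $\pi_* : H_1(X_1(N),C_\infty,\Zp)_+ \rightarrow H_+$ is exactly $J$ times the source; since $\tilde{\varpi}$ is surjective onto $\mathcal{K}$, the image of this kernel under $\tilde{\varpi}$ is all of $J\cdot\mathcal{K}$, i.e.\ the whole target of your $\Phi$, not a subgroup of $J^2\cdot\mathcal{K}$. Your claim that $\tilde{\varpi}$ sends $(\langle d\rangle-1)\tilde h$ into $J\cdot J\cdot\mathcal{K}$ presupposes $\tilde{\varpi}(\tilde h)\in J\cdot\mathcal{K}$, which fails for general $\tilde h$ in the \emph{relative} homology; and your chosen lift $\sum_a\lambda_a\,\xi_1([(a,1)])$ has nonzero boundary in $\Zp[C_\infty]$ in general, so it does not lie in the subgroup where this could be repaired. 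The paper's resolution is precisely to avoid relative homology at this stage: by Mazur's lemma (quoted in the proof of Proposition \ref{odd_modSymb_construction}) the image of the absolute homology $H^{(p)}_+$ of the intermediate curve $X_1^{(p)}(N)$ in $H_+$ is exactly $I\cdot H_+$, giving the isomorphism $H^{(p)}_+/(I_0+J)\cdot H^{(p)}_+\simeq I\cdot H_+/I^2\cdot H_+$, and by the boundary compatibility of Lemma \ref{Lemma_boundary_compatibility} the restriction $\varpi^{(p)}$ to absolute homology already lands in $J^{(p)}\cdot\mathcal{K}^{(p)}$; only then does the ambiguity $(I_0+J^{(p)})\cdot H^{(p)}_+$ of the lift map into $(J^{(p)})^2\cdot\mathcal{K}^{(p)}$ (Sharifi's conjecture handling the $I_0$ part). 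Concretely, one lifts $\xi([xy]-[x]-[y])$ to the boundary-free element $[x,y^{-1}]^*_+-[x,1]^*_+ + [y^{-1},1]^*_+$ rather than treating the symbols $[a,1]^*_+$ separately; this is also where the $-\tfrac{1}{2}([\sigma_a]-1)$ correction is actually computed, rather than by the symmetrization heuristic you describe.

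Two further points. Surjectivity of $\Phi$ is not ``clear from the image of the $\xi([a])$'': the target $J\cdot\mathcal{K}/J^2\cdot\mathcal{K}$ is generated by classes $([\sigma_b]-1)\langle1-\zeta_N^u,1-\zeta_N^v\rangle$ involving two-variable symbols, and reducing these to your proposed generators is exactly the content of Proposition \ref{Prop_iso_H_K}, which needs a Nakayama argument modulo $J^{(p)}$ via the residue symbol together with the fact (Sharifi plus Dirichlet) that the elements $([\ell]-1)(\ell-1)$ generate $J^{(p)}$. And while the final cardinality comparison is sound in outline, the identification of $J\cdot\mathcal{K}/J^2\cdot\mathcal{K}$ with $((\Z/N\Z)^{\times})^{\otimes 2}\otimes_{\Z}\Zp$ rests on Proposition \ref{Prop_K_theory} (\ref{Prop_K_theory_Lambda}), i.e.\ on the Greither--Popescu theorem giving $\mathcal{K}^{(p)}\simeq\Lambda^{(p)}/(\zeta^{(p)})$; this substantial arithmetic input is invisible in your plan.
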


\begin{rem}\label{Remark_Sharifi_Venkatesh}
If we only assume that the restriction of $\tilde{\varpi}$ to $H_1(X_1(N), \Zp)$ is annihilated by the Hecke operators $T_{\ell}-\ell\cdot \langle \ell \rangle -1$ for primes $\ell \neq N$ and by $T_N-1$ (\cf Remark \ref{Remark_Sharifi_Venkatesh}), the proof of Theorem \ref{thm_conj} gives the existence of the map $I\cdot H_+/I^2\cdot H_+ \rightarrow J\cdot \mathcal{K}/J^2\cdot \mathcal{K}$ of Conjecture \ref{conjecture_I/I^2}. However, we do not know how to prove that this map is surjective without assuming Conjecture \ref{Sharifi_conjecture}.
\end{rem}

The embedding $\Z[\frac{1}{Np}] \hookrightarrow \Z[\zeta_N, \frac{1}{Np}]$ yields a map on $K$-groups $K_2(\Z[\frac{1}{Np}]) \rightarrow K_2(\Z[\zeta_N, \frac{1}{Np}])$. One can show that we have a canonical group isomorphism $K_2(\Z[\frac{1}{Np}]) \otimes_{\Z} \Zp \simeq (\Z/N\Z)^{\times} \otimes_{\Z} \Zp$ and that the map $K_2(\Z[\frac{1}{Np}]) \otimes_{\Z} \Zp \rightarrow \mathcal{K}$ has image $\nu \cdot \mathcal{K}$ where $\nu = \sum_{g \in \Gal(\Q(\zeta_N)/\Q)} [g]$ is the norm. We prove that Conjecture (\ref{Sharifi_conjecture}) is true after quotienting $\tilde{\varpi}$ by $\nu \cdot \mathcal{K}$.

\begin{thm}\label{thm_Sharifi_mod_norms}
The map $H_1(X_1(N), C_{\infty}, \Zp)_+ \rightarrow \mathcal{K}/\nu\cdot \mathcal{K}$ obtained from $\tilde{\varpi}$ is annihilated by the Hecke operators $T_{\ell}-\ell\cdot \langle \ell \rangle -1$ for primes $\ell \neq N$ and by $T_{N}-1$.
\end{thm}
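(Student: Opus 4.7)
The plan is to follow the Iwasawa-theoretic strategy of Fukaya--Kato for Sharifi's conjecture and then to descend to finite level $\Gamma_1(N)$; the role of the quotient by $\nu \cdot \mathcal{K}$ is precisely to absorb the Eisenstein error that appears in the descent.

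I would first lift the setup to the modular-and-cyclotomic tower. For each $r \geq 0$, define a map
$$\tilde{\varpi}_r : H_1(X_1(Np^r), C_\infty^{(r)}, \Zp)_+ \longrightarrow K_2\!\left(\Z[\zeta_{Np^{r+1}}, \tfrac{1}{Np}]\right) \otimes_\Z \Zp$$
by the same Manin-symbol recipe defining $\tilde{\varpi}$, using Steinberg symbols $\langle 1-\zeta_{Np^{r+1}}^u, 1-\zeta_{Np^{r+1}}^v \rangle$, where $C_\infty^{(r)}$ denotes the analog of $C_\infty$ at level $\Gamma_1(Np^r)$. Applying Hida's ordinary projector and passing to the inverse limit over $r$ assembles these into an Iwasawa-theoretic map $\tilde{\varpi}_\infty$ between an ordinary Hida-theoretic modular symbol module and an Iwasawa $K_2$ module. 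Fukaya--Kato's main theorem on Sharifi's conjecture then gives the Hecke annihilation of $\tilde{\varpi}_\infty$ by the Iwasawa-theoretic analogues of $T_\ell - \ell\langle\ell\rangle - 1$ (for $\ell \neq N$) and of $T_N-1$.

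The main obstacle is the descent from this Iwasawa setting back to level $\Gamma_1(N)$. Two kinds of errors must be controlled: first, the passage from the full modular symbols at level $N$ to the ordinary part at level $Np^r$ used by Fukaya--Kato; second, the corestriction from $\Q(\zeta_{Np^{r+1}})$ down to $\Q(\zeta_N)$. I would argue that both sources of error produce only classes that land in the image of the natural map $K_2(\Z[\tfrac{1}{Np}]) \otimes \Zp \rightarrow \mathcal{K}$, which by the discussion immediately preceding the theorem equals the norm submodule $\nu \cdot \mathcal{K}$; intuitively, the descent obstruction is Galois-invariant, and $\nu\cdot\mathcal{K}$ captures exactly this trivial-character piece (up to a factor of $N-1$). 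Once this containment is established, quotienting by $\nu \cdot \mathcal{K}$ kills both error contributions, and the Iwasawa-level annihilation descends cleanly to level $\Gamma_1(N)$, yielding the theorem.
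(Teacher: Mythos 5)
Your overall strategy---invoke the Fukaya--Kato machinery and let the quotient by $\nu\cdot\mathcal{K}$ absorb an Eisenstein-type error---has the right flavor, but the route you choose (Hida ordinary projectors up the modular tower $X_1(Np^r)$, then descent) is not the one the paper takes, and the step you leave unjustified is exactly the hard point. The paper stays at modular level $N$ throughout: it sends $[u,v]^*$ to the Beilinson--Kato class $g_{0,\frac{u}{N}}\cup g_{0,\frac{v}{N}}$ in $H^2_{\textnormal{\'et}}(\mathcal{Y}_1(N)\otimes\Z[\tfrac{1}{p}],\Zp(2))$, and the tower it climbs is the cyclotomic one $\Q(\zeta_{p^n})$ in the coefficients, not the modular tower. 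The quotient by $\nu\cdot\mathcal{K}$ enters through the Hochschild--Serre exact sequence
$$0\to H^2(\Z[\tfrac{1}{Np}],\Zp(2))\to H^2_{\textnormal{\'et}}(\mathcal{Y}_1(N)\otimes\Z[\tfrac{1}{p}],\Zp(2))\xrightarrow{g} H^1(\Z[\tfrac{1}{Np}],H^1_{\textnormal{\'et}}(Y_1(N))(2))\to 0\text{ ,}$$
because only the composite $\rho=g\circ\rho'$ can be shown to be well defined and anti-Hecke-equivariant, and the image of the subobject $H^2(\Z[\tfrac{1}{Np}],\Zp(2))$ in $\mathcal{K}$ is exactly $\nu\cdot\mathcal{K}$ by a restriction--corestriction computation.

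The gap in your proposal is the assertion that the two descent errors (ordinary projection and corestriction down the tower) land in $\nu\cdot\mathcal{K}$ "because the descent obstruction is Galois-invariant." That is not a proof, and the heuristic is off: the error produced by corestriction along the $p$-power direction is not a Galois norm for $\Gal(\Q(\zeta_N)/\Q)$ but an Euler factor at $p$, namely $\eta_p=1-T_p^*+p\langle p\rangle^{-1}$ (\cf Proposition \ref{Prop8}). To remove it one must invert $\eta_p$, and $\eta_p$ is invertible only on $H^1(\Z[\tfrac{1}{Np}],H^1_{\textnormal{\'et}}(X_1(N))(2))\otimes\Qp$ (via the Weil bounds, Lemma \ref{Prop10}), while it kills the $H^2(\Z[\tfrac{1}{Np}],\Zp(2))$ piece. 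This injectivity, together with the integrality statement (torsion-freeness of $H^1(\Z[\tfrac{1}{Np}],H^1_{\textnormal{\'et}}(Y_1(N))(2))$, Lemma \ref{Prop12}, which rests on crystallinity), is the substance of the proof and is entirely absent from your plan. Without it, neither the well-definedness of your descended map nor the containment of the error in $\nu\cdot\mathcal{K}$ is established; the paper itself remarks that the ordinary-projector mechanism that makes the corresponding error vanish in Fukaya--Kato's $p\mid N$ setting is unavailable when $p\nmid N$, which is precisely why $\nu\cdot\mathcal{K}$ is nonzero here and the theorem only holds modulo norms.
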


\begin{rem}
This proves that the identity of Remark \ref{Remark_identity_Buisioc} holds in $\mathcal{K}/\nu\cdot \mathcal{K}$. However, we could not find a direct or elementary proof. 
\end{rem}

Our proof is inspired by -- and rely on -- the unpublished work of Fukaya Takako and Kazuya Kato on a conjecture of Sharifi similar to Conjecture \ref{Sharifi_conjecture} \cite{Fukaya_Kato}. In particular, we make use of Beilinson--Kato elements in the $K_2$ group of the modular curve $Y_1(N)$. Fukaya and Kato assume that the level $N$ is divisible by $p$. The analogue of $\nu\cdot \mathcal{K}$ in their situation is the group $H^2(\Z[\tfrac{1}{Np}], H^{0, \text{ord}}_{\textnormal{\'et}}(Y_1(N)(2)))$, where $\text{ord}$ means the ordinary part. The group $H^{0, \text{ord}}_{\textnormal{\'et}}(Y_1(N)(2))$ is trivial in their case, since $T_p$ acts on  by multiplication by $p$ on $H^{0}_{\textnormal{\'et}}(Y_1(N)(2))=\Zp$ (\cf \cite[Lemma 5.2.5]{Fukaya_Kato}). In our situation there are no ordinariness considerations and the group $\nu\cdot \mathcal{K} \simeq H^2(\Z[\tfrac{1}{Np}], H^{0}_{\textnormal{\'et}}(Y_1(N)(2))) = H^2(\Z[\tfrac{1}{Np}], \Zp(2))$ is non-zero (and is in fact canonically isomorphic to $(\Z/N\Z)^{\times} \otimes_{\Z} \Zp$). It thus seems that the method of Fukaya--Kato alone is not enough to go beyond Theorem \ref{thm_Sharifi_mod_norms}.

Using Theorem \ref{thm_Sharifi_mod_norms}, a result of Merel \cite{Merel_accouplement} and a result of Cornelius Greither and Christian D. Popescu \cite{GP}, we are able to prove:

\begin{thm}\label{thm_conj_2}
Conjecture (\ref{conjecture_I/I^2}) holds if the image of the integer $\prod_{k=1}^{\frac{N-1}{2}}k^k$ in $(\Z/N\Z)^{\times} \otimes_{\Z} \Zp$ is trivial.
\end{thm}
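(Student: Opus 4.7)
The plan is to upgrade Theorem \ref{thm_Sharifi_mod_norms} to the full Conjecture \ref{conjecture_I/I^2} by controlling the obstruction coming from the norm subgroup $\nu \cdot \mathcal{K}$. First, by Theorem \ref{thm_Sharifi_mod_norms}, the composite
$$H_1(X_1(N), C_{\infty}, \Zp)_+ \xrightarrow{\tilde{\varpi}} \mathcal{K} \longrightarrow \mathcal{K}/\nu \cdot \mathcal{K}$$
is annihilated by the Eisenstein-type Hecke operators $T_\ell - \ell \langle \ell \rangle - 1$ ($\ell \neq N$) and by $T_N - 1$. Following the strategy of the proof of Theorem \ref{thm_conj}, this yields a well-defined map
$$\bar{\psi} : I\cdot H_+/I^2\cdot H_+ \longrightarrow J\cdot \mathcal{K}/(J^2\cdot \mathcal{K} + J\cdot \mathcal{K} \cap \nu\cdot \mathcal{K})$$
that agrees with the formula of Conjecture \ref{conjecture_I/I^2} modulo the image of $\nu\cdot \mathcal{K}$.

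Next, since $J\cdot \nu = 0$ in $\Lambda$, the subgroup $\nu\cdot \mathcal{K}$ lies in the $J$-invariants of $\mathcal{K}$, so $J\cdot \mathcal{K} \cap \nu\cdot \mathcal{K}$ is a subquotient of $\nu\cdot \mathcal{K} \cong K_2(\Z[\tfrac{1}{Np}])\otimes \Zp \cong (\Z/N\Z)^{\times}\otimes \Zp$ recalled in the introduction. The theorem of Greither--Popescu \cite{GP} on the Fitting ideal of the \'etale $K_2$ of cyclotomic integers (an equivariant refinement of Coates--Lichtenbaum and a $K_2$-analogue of Brumer--Stark) gives precise structural control on this module in terms of Stickelberger-type elements, and in particular provides an explicit presentation of the obstruction group $(J\cdot \mathcal{K}\cap \nu\cdot \mathcal{K})/(J^2\cdot \mathcal{K}\cap \nu\cdot \mathcal{K})$.

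Then, the obstruction to lifting $\bar{\psi}$ to a map $\psi : I\cdot H_+/I^2\cdot H_+ \to J\cdot \mathcal{K}/J^2\cdot \mathcal{K}$ realizing the formula of Conjecture \ref{conjecture_I/I^2} is a homomorphism into the group just described. Applying Merel's explicit formula for the arithmetic pairing on modular symbols \cite{Merel_accouplement} to a generating family of $I\cdot H_+/I^2\cdot H_+$, and matching the result via the Greither--Popescu presentation with Steinberg symbols of cyclotomic units of the form $\prod_k (1-\zeta_N^k)^{k}$, one identifies this obstruction class with the image of $\prod_{k=1}^{(N-1)/2} k^k$ in $(\Z/N\Z)^{\times}\otimes \Zp$. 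Under the hypothesis of the theorem this class is trivial, so $\bar{\psi}$ lifts to a well-defined $\psi$. Since by Mazur's theorem $I\cdot H_+/I^2\cdot H_+$ is isomorphic to $((\Z/N\Z)^{\times}\otimes \Zp)^{\otimes 2}$, and the same holds for $J\cdot \mathcal{K}/J^2\cdot \mathcal{K}$ by Proposition \ref{Prop_K_theory}, the two sides have the same finite cardinality; the surjectivity of $\psi$ (visible because $\bar{\psi}$ is surjective modulo norms and the obstruction lift covers $J\cdot \mathcal{K}\cap \nu\cdot \mathcal{K}$) then forces $\psi$ to be an isomorphism.

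The principal difficulty will be the third step, namely turning the \emph{a priori} abstract obstruction class into the explicit invariant $\prod_{k=1}^{(N-1)/2}k^k$. Concretely one must write Merel's pairing as a Steinberg-symbol expression, carefully track the factor $\tfrac{1}{2}$ and the symmetric/antisymmetric decomposition of $\langle 1-\zeta_N^a,1-\zeta_N\rangle$ under $\sigma \mapsto \sigma^{-1}$ on $\Gal(\Q(\zeta_N)/\Q)$, and then match this against the Greither--Popescu description of $\nu\cdot \mathcal{K}\cap J\cdot \mathcal{K}$; identifying the resulting cyclotomic-unit expression with $\prod_{k=1}^{(N-1)/2}k^k$ modulo norms is where the bulk of the computational work lies.
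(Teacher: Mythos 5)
Your ingredients --- Theorem \ref{thm_Sharifi_mod_norms}, Merel's pairing, Greither--Popescu, and a final cardinality count --- are the right ones, but the mechanism you propose for removing the $\nu\cdot\mathcal{K}$ ambiguity has a gap at its centre. Your third step asserts that the obstruction to lifting $\bar{\psi}$ to a well-defined map into $J\cdot \mathcal{K}/J^2\cdot \mathcal{K}$ realizing the formula of Conjecture \ref{conjecture_I/I^2} \emph{is} the class of $\prod_{k=1}^{(N-1)/2}k^k$; no argument is given for this identification, and you yourself flag it as where the bulk of the work lies. Worse, the identification is almost certainly not the right statement: by Theorem \ref{thm_conj}, Sharifi's conjecture --- which is expected to hold unconditionally --- already implies that the map of Conjecture \ref{conjecture_I/I^2} is well defined with \emph{no} hypothesis on $\prod_k k^k$. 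So the obstruction to well-definedness must vanish in general and cannot equal the Merel number, which is frequently nontrivial. The hypothesis of Theorem \ref{thm_conj_2} is therefore not playing the role of killing an obstruction class, and your surjectivity argument, which leans on ``the obstruction lift covers $J\cdot\mathcal{K}\cap\nu\cdot\mathcal{K}$,'' inherits the same vagueness.

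The paper's route is different and never computes an obstruction. One first proves the mod $p^s$ statement (Theorem \ref{I/I^2_modulo_p}) for every $1\le s\le t$: after tensoring with $\Z/p^s\Z$ the norm ambiguity disappears for elementary reasons, since $\nu^{(p)}\in p^t\Lambda^{(p)}+(J^{(p)})^2$ gives $\nu^{(p)}\cdot \overline{\mathcal{K}}^{(p)}\subset (J^{(p)})^2\cdot \overline{\mathcal{K}}^{(p)}$ once $s\le t$, so Theorem \ref{thm_Sharifi_mod_norms} already yields a surjection $I\cdot H_+/I^2\cdot H_+\twoheadrightarrow J\cdot\overline{\mathcal{K}}/J^2\cdot\overline{\mathcal{K}}$. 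The Merel number then enters not as an obstruction but as the common \emph{order} of the two sides: Merel's theorem gives $I\cdot\overline{H}_+/I^2\cdot\overline{H}_+\cong \Z/p^v\Z$ with $v=\ord_p\bigl(\sum_k k\log k\bigr)$, while Greither--Popescu combined with the identity $\sum_x \B_2(\tfrac{x}{N})\log x=-\tfrac{4}{3}\sum_k k\log k$ gives $J\cdot\overline{\mathcal{K}}/J^2\cdot\overline{\mathcal{K}}\cong \Z/p^v\Z$ as well, so the surjection is an isomorphism. Finally, the hypothesis of Theorem \ref{thm_conj_2} says exactly that $v=t$, which makes the reduction maps $I\cdot H_+/I^2\cdot H_+\to I\cdot\overline{H}_+/I^2\cdot\overline{H}_+$ and $J\cdot\mathcal{K}/J^2\cdot\mathcal{K}\to J\cdot\overline{\mathcal{K}}/J^2\cdot\overline{\mathcal{K}}$ isomorphisms between groups of order $p^t$, and the integral statement follows from the mod $p^t$ one. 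If you wish to keep your framing, the step you must actually supply is integral well-definedness of the explicit formula, and that information cannot be extracted from the Merel number alone.
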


\begin{rem}
The condition that the integer $\prod_{k=1}^{\frac{N-1}{2}}k^k$ in $(\Z/N\Z)^{\times} \otimes_{\Z} \Zp$ is trivial is related to the Newton polygon of the completion of $\mathbb{T}$ at $I$ (\cf \cite[Corollary 1.4.2 and Proposotion 1.5.3]{WWE}).
\end{rem}

There is a ``modulo $p^s$'' version of Conjecture (\ref{conjecture_I/I^2}) that we can prove in all cases using Theorem \ref{thm_Sharifi_mod_norms}. Let $t = \ord_p(N-1)$ be the $p$-adic valuation of $N-1$ and $s$ be an integer such that $1 \leq s \leq t$. 

\begin{thm}\label{I/I^2_modulo_p}
There is a group isomorphism
$$I\cdot (H_+ \otimes_{\Zp} \Z/p^s\Z) /I^2\cdot (H_+ \otimes_{\Zp} \Z/p^s\Z) \xrightarrow{\sim} J\cdot (\mathcal{K}\otimes_{\Zp} \Z/p^s\Z)/J^2\cdot (\mathcal{K}\otimes_{\Zp} \Z/p^s\Z)$$
sending $\sum_{a \in (\Z/N\Z)^{\times}} \lambda_a\cdot \xi([a])$ to $$\sum_{a \in (\Z/N\Z)^{\times}} \lambda_a\cdot (\langle 1-\zeta_N^a, 1-\zeta_N\rangle-\frac{1}{2}\cdot ([\sigma_a]-1)\cdot \langle 1-\zeta_N^a, 1-\zeta_N\rangle) \text{ ,}$$
where $\lambda_a \in \Z/p^s\Z$.
\end{thm}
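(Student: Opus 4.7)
The plan is to combine Theorem \ref{thm_Sharifi_mod_norms} with the descent strategy of Theorem \ref{thm_conj}, exploiting that the submodule $\nu\cdot\mathcal{K}$ of norms becomes negligible modulo $p^s$ whenever $s\leq t$. By Theorem \ref{thm_Sharifi_mod_norms}, the composition
\[ H_1(X_1(N),C_{\infty},\Zp)_+ \xrightarrow{\tilde{\varpi}} \mathcal{K} \twoheadrightarrow \mathcal{K}/\nu\cdot\mathcal{K} \]
is annihilated by the Eisenstein operators $T_\ell-\ell\langle\ell\rangle-1$ and $T_N-1$. Running the kernel-image analysis of $H_1(X_1(N),C_{\infty},\Zp)_+ \to H_+$ exactly as in the proof of Theorem \ref{thm_conj}, but with target $\mathcal{K}/\nu\cdot\mathcal{K}$ in place of $\mathcal{K}$, produces a homomorphism
\[ \psi \colon I\cdot H_+/I^2\cdot H_+ \longrightarrow J\cdot(\mathcal{K}/\nu\cdot\mathcal{K})/J^2\cdot(\mathcal{K}/\nu\cdot\mathcal{K}) \]
that sends $\sum_a\lambda_a\cdot\xi([a])$ (with $\sum_a a\otimes\lambda_a=0$) to the reduction modulo $\nu\cdot\mathcal{K}$ of the expression of Conjecture \ref{conjecture_I/I^2}.

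The next step is a group-ring computation showing $\nu\in p^s\Lambda+J^2$. Writing $\nu=(N-1)+\sum_{g\in G}([g]-1)$ with $G=\Gal(\Q(\zeta_N)/\Q)$, the scalar $N-1$ vanishes modulo $p^s$ since $s\leq t=\ord_p(N-1)$. For the second term, fix a generator $g_0$ of the cyclic group $G$; under the canonical identification $J/J^2\simeq G\otimes_{\Z}\Zp\simeq\Z/p^t\Z$ sending $[g]-1\mapsto g\otimes 1$,
\[ \sum_{g\in G}([g]-1) \equiv \sum_{i=0}^{N-2} g_0^i\otimes 1 = \tfrac{(N-1)(N-2)}{2}\cdot(g_0\otimes 1) \pmod{J^2}, \]
and the coefficient has $p$-adic valuation at least $t$ since $p\geq 5$ makes $2$ a $p$-adic unit. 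Thus $\nu\cdot\mathcal{K}\subset p^s\cdot\mathcal{K}+J^2\cdot\mathcal{K}$, so the natural surjection
\[ J\cdot(\mathcal{K}\otimes_{\Zp}\Z/p^s\Z)/J^2\cdot(\mathcal{K}\otimes_{\Zp}\Z/p^s\Z) \twoheadrightarrow J\cdot((\mathcal{K}/\nu\cdot\mathcal{K})\otimes_{\Zp}\Z/p^s\Z)/J^2\cdot((\mathcal{K}/\nu\cdot\mathcal{K})\otimes_{\Zp}\Z/p^s\Z) \]
is in fact an isomorphism.

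Composing the mod-$p^s$ reduction of $\psi$ with the inverse of this isomorphism produces the map of the theorem. Surjectivity follows from the generation of $\mathcal{K}$ as a $\Lambda$-module by cyclotomic Steinberg symbols together with Remark \ref{remark_augmentation_K}, since the classes of $\sum_a\lambda_a\langle 1-\zeta_N^a,1-\zeta_N\rangle$ with $\sum_a a\otimes\lambda_a=0$ already span the target, and the correction term $-\tfrac{1}{2}([\sigma_a]-1)\langle 1-\zeta_N^a,1-\zeta_N\rangle$ stays in $J\cdot\mathcal{K}$ so does not affect surjectivity. Bijectivity is then obtained by comparing orders: the source is identified via Mazur's theorem, and the target via the $\Lambda$-module structure of $\mathcal{K}$ coming from the work of Greither--Popescu, both sides turning out to be cyclic of order $p^s$. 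The main obstacle will be this last order comparison on the target side, which demands a careful analysis of the $J$-adic filtration on $\mathcal{K}$; a secondary point is verifying that the descent argument of the proof of Theorem \ref{thm_conj} adapts verbatim to the mod-norms setting.
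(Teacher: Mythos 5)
Your reduction steps track the paper's proof closely: you invoke Theorem \ref{thm_Sharifi_mod_norms} together with the descent mechanism from the proof of Theorem \ref{thm_conj} to produce a surjection $\psi$ out of $I\cdot H_+/I^2\cdot H_+$, and your group-ring computation that $\nu\in p^s\Lambda+J^2$ (hence that the norm submodule becomes invisible in $J\cdot\overline{\mathcal{K}}/J^2\cdot\overline{\mathcal{K}}$ once $s\leq t$) is exactly the observation the paper uses, carried out at the level of $\mathcal{K}$ rather than $\mathcal{K}^{(p)}$. The genuine gap is in your final bijectivity step: the two sides are \emph{not} cyclic of order $p^s$. They are cyclic of order $p^v$, where $p^v$ is the exact power of $p$ dividing the Merel number $\sum_{k=1}^{(N-1)/2}k\cdot\log(k)$, and $v$ can be strictly smaller than $s$. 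Indeed, if both sides had order $p^s$ for every $s\leq t$, then taking $s=t$ would prove Conjecture \ref{conjecture_I/I^2} unconditionally, whereas the paper obtains it only under the hypothesis of Theorem \ref{thm_conj_2}, which is precisely the condition $v=t$. Concretely, Mazur's theorem does not determine the source: one has $I\cdot\overline{H}_+/I^2\cdot\overline{H}_+\simeq I\cdot H_+/\bigl(I^2\cdot H_+ + (I\cdot H_+\cap p^s\cdot H_+)\bigr)$, and the intersection $I\cdot H_+\cap p^s\cdot H_+$ is in general strictly larger than $p^s\cdot I\cdot H_+ + I^2\cdot H_+$; its size is governed by Merel's computation of the intersection pairing (\cite[Th\'eor\`eme 1]{Merel_accouplement}, recorded as Theorem \ref{Thm_structure_I_H_bar}), which yields order exactly $p^v$.

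On the target side, the ``careful analysis of the $J$-adic filtration'' that you defer is Lemma \ref{Lemma_structure_J_K_bar}: using $\mathcal{K}^{(p)}\simeq\Lambda^{(p)}/(\zeta^{(p)})$ from Greither--Popescu, one gets $J\cdot\overline{\mathcal{K}}/J^2\cdot\overline{\mathcal{K}}\simeq\overline{J}^{(p)}/\bigl((\overline{J}^{(p)})^2+(\overline{\zeta}^{(p)})\bigr)$, and the image of $\zeta^{(p)}$ in $\overline{J}^{(p)}/(\overline{J}^{(p)})^2\simeq\Z/p^s\Z$ equals $\sum_{x}\B_2(\tfrac{x}{N})\cdot\log(x)=-\tfrac{4}{3}\sum_{k}k\cdot\log(k)$, so the target also has order exactly $p^v$. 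The actual content of the theorem is the matching of these two orders through the Merel number; without both inputs, your surjection $\psi$ is not known to descend to an injection on $I\cdot\overline{H}_+/I^2\cdot\overline{H}_+$, and the order comparison as you state it fails whenever $v<s$.
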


\begin{rem}
Theorem \ref{I/I^2_modulo_p} is equivalent to the determination of the second \textit{higher Eisenstein element} in $H_1(Y_0(N), \Z/p^s\Z)^-$ in the language of \cite{Lecouturier_higher_Eisenstein}.
\end{rem}

\subsection*{Acknowledgements}
This paper grew out of the authors PhD theses. We want to thank our respective advisors Lo\"ic Merel and Romyar Sharifi for their support and advice during the completion of this work. The first author was funded by Universit\'e Paris--Diderot during his PhD thesis and by the Yau Mathematical Sciences Center and Tsinghua university during the writing of this paper. The second author was funded by the University of Arizona during his PhD thesis and partially supported by National Science Foundation under Grants No. DMS-1360583 and No. DMS-1401122. The second author would like to thank Sujatha Ramdorai for supporting his
postdoctoral studies. Finally, we would like to thank the Yau Mathematical Sciences Center for inviting the second author on December 2018, when this project began.

\section{Refined Hida theory}\label{odd_modSymb_section_hida}
In this section, we will use the following notation.
 \begin{itemize}
  \item $\sigma = \begin{pmatrix}
0 & -1 \\
1 & 0
\end{pmatrix} $ and $\tau = \begin{pmatrix}
0 & -1 \\
1 & -1
\end{pmatrix}$ $\in \SL_2(\Z)$.
 \item If $\Gamma$ is a subgroup of $\Gamma_0(N)$ containing $\Gamma_1(N)$, let $X_{\Gamma}$ be the compact modular curve associated to $\Gamma$. 
 \item  $C_{\Gamma}^{\infty}$ (resp. $C_{\Gamma}^{0}$) is the set of cusps of $X_{\Gamma}$ above the cusp $\Gamma_0(N) \cdot \infty$ (resp. $\Gamma_0(N) \cdot 0$) of $X_0(N)$. 
 \item $C_{\Gamma} = C_{\Gamma}^0 \cup C_{\Gamma}^{\infty}$
 \item  $\tilde{H}_{\Gamma}' = H_1(X_{\Gamma}, C_{\Gamma}, \Zp)$, $\tilde{H}_{\Gamma} = H_1(X_{\Gamma}, C_{\Gamma}^{\infty}, \Zp)$ and $H_{\Gamma} = H_1(X_{\Gamma}, \Zp)$. 
 \item $\partial : \tilde{H}_{\Gamma}'  \rightarrow \Zp[C_{\Gamma}]^0$ is the boundary map, sending the geodesic path $\{\alpha, \beta\}$ to $[\beta]-[\alpha]$ where $\alpha$, $\beta$ $\in \mathbf{P}^1(\mathbf{Q})$ and $\Zp[C_{\Gamma}]^0$ is the augmentation subgroup of $\Zp[C_{\Gamma}]$.
 \item $\left(\tilde{H}_{\Gamma}\right)_+$ is the subgroup of elements of $\tilde{H}_{\Gamma}$ fixed by the complex conjugation. A similar notation applies to $H_{\Gamma}$.
 \item $D_{\Gamma} \subset (\Z/N\Z)^{\times}$ is the subgroup generated by the classes of the lower right corners of the elements of $\Gamma$ and by the class of $-1$. 
 \item $\Lambda_{\Gamma} = \Zp[(\Z/N\Z)^{\times}/D_{\Gamma}]$.
 \item If $\Gamma_1$ and $\Gamma_2$ are subgroups of $\SL_2(\Z)$ such that $\Gamma_1(N) \subset \Gamma_1 \subset \Gamma_2 \subset \Gamma_0(N)$, we let $J_{\Gamma_1\rightarrow \Gamma_2} = \Ker(\Lambda_{\Gamma_1} \rightarrow \Lambda_{\Gamma_2})$. It is a principal ideal of $\Lambda_{\Gamma_1}$, generated by $[x]-1$ where $x$ is a generator of $\Ker((\Z/N\Z)^{\times}/D_{\Gamma_1} \rightarrow (\Z/N\Z)^{\times}/D_{\Gamma_2})$.  
 \item $\tilde{\mathbb{T}}_{\Gamma}'$ (resp. $\tilde{\mathbb{T}}_{\Gamma}$, resp. $\mathbb{T}_{\Gamma}$) is the $\Zp$-Hecke algebra acting faithfully on $\tilde{H}_{\Gamma}'$ (resp. $\tilde{H}_{\Gamma}$, resp. $H_{\Gamma}$) generated by the Hecke operators $T_n$ for $n \geq 1$ and the diamond operators (induced by the Albanese functoriality). The $d$th diamond operator is denoted by $\langle d \rangle$. By convention, it corresponds on modular form to the action of a matrix whose lower right corner is congruent to $d$ modulo $N$.
\end{itemize}

We will need some ``refined Hida control'' results, describing the kernel of the various maps in homology induced by the degeneracy maps between the various modular curves. 

Manin proved \cite[Theorem 1.9]{Manin} that we have a surjection
$$\mathcal{\xi}_{\Gamma} : \Zp[\Gamma \backslash \PSL_2(\Z)] \rightarrow H_1(X_{\Gamma}, C_{\Gamma}, \Zp)$$
such that $\xi_{\Gamma}(\Gamma\cdot g)$ is the class of the geodesic path $w_N\{g(0), g(\infty)\}$, $w_N$ being the Atkin--Lehner involution (induced by the map $z \mapsto -\frac{1}{Nz}$ in the upper-half plane). Furthermore, he proved that the kernel of $\xi_{\Gamma}$ is spanned by the sum of the (right) $\sigma$-invariants and $\tau$-invariants.

Recall that $\Gamma_1(N) \subset \Gamma \subset \Gamma_0(N)$. Consider the bijection $$\kappa : \Gamma \backslash \PSL_2(\Z) \xrightarrow{\sim} \left((\Z/N\Z)^2\backslash \{(0,0)\} \right) / D_{\Gamma}$$ given by $\kappa(\Gamma \cdot g)= [c,d]$
where $g = \begin{pmatrix}
a & b \\
c & d
\end{pmatrix}$ and $[c,d]$ is the class of $(c,d)$ modulo $D_{\Gamma}$. By abuse of notation, we identify $\Gamma \backslash \PSL_2(\Z)$ and $\left((\Z/N\Z)^2\backslash \{(0,0)\} \right) / D_{\Gamma}$.

The map $(\Z/N\Z)^{\times}/ D_{\Gamma} \rightarrow C_{\Gamma}^0$ (resp. $(\Z/N\Z)^{\times}/D_{\Gamma} \rightarrow C_{\Gamma}^{\infty}$) given by $u \mapsto \langle u^{-1} \rangle \cdot (\Gamma \cdot 0)$ (resp. $u \mapsto \langle u^{-1} \rangle \cdot (\Gamma\cdot \infty)$) (where $\langle \cdot\rangle$ denotes the diamond operator) is a bijection. If $u \in (\Z/N\Z)^{\times}/D_{\Gamma} $, we denote by $[u]_{\Gamma}^0$ (resp. $[u]_{\Gamma}^{\infty}$) the image of $u$ in $C_{\Gamma}^0$ (resp. $C_{\Gamma}^{\infty}$). In other words, we have $[u]_{\Gamma}^0 = \Gamma \cdot \frac{c}{d}$ for some coprime integers $c$ and $d$ not divisible by $N$, and such that the image of $d$ in $(\Z/N\Z)^{\times}/D_{\Gamma} $ is $u^{-1}$. Similarly, $[u]_{\Gamma}^{\infty} = \Gamma \cdot \frac{a}{N\cdot b}$ for some coprime integers $a$ and $b$ not divisible by $N$, and such that the image of $a$ in $(\Z/N\Z)^{\times}/D_{\Gamma} $ is $u$. 

Let $\begin{pmatrix} a&b \\ c&d \end{pmatrix} \in \SL_2(\Z)$. We describe $\partial(\xi_{\Gamma}([c,d]))$ in the various cases that can happen.

\begin{itemize}
\item If $c \equiv 0 \text{ (modulo }N\text{)}$ then $a \equiv d^{-1} \text{ (modulo }N\text{)}$.
Thus, we have $\partial(\xi_{\Gamma}([c,d])) = [d]_{\Gamma}^{0} - [d]_{\Gamma}^{\infty}$.

\item If $d \equiv 0 \text{ (modulo }N\text{)}$ then we have $b \equiv -c^{-1} \text{ (modulo }N\text{)}$.
Thus, we have $\partial(\xi_{\Gamma}([c,d])) = [c]_{\Gamma}^{\infty} - [c]_{\Gamma}^{0}$.

\item If  $c\cdot d \not\equiv 0 \text{ (modulo }N\text{)}$ then we have $\partial(\xi_{\Gamma}([c,d])) = [c]_{\Gamma}^{\infty} - [d]_{\Gamma}^{\infty}$.
\end{itemize}

In particular, the set of $[c,d]$ such that $\partial(\xi_{\Gamma}([c,d])) \in \Z[C_{\Gamma}^{\infty}]$ coincides with the set of $[c,d]$ such that $c\cdot d \not\equiv 0 \text{ (modulo }N \text{)}$. Let $M_{\Gamma}^0$ be the sub-$\Zp$-module of $\Zp[\left((\Z/N\Z)^2\backslash \{(0,0)\} \right) / D_{\Gamma}]$ generated by the symbols $[c,d]$ with $c\cdot d \not\equiv 0 \text{ (modulo }N\text{)}$. 

The following statement is well-known, but we could not find a reference.
\begin{prop}\label{generation_Manin_C_0^{(p^r)}} 
The map $\xi_{\Gamma}$ induces a surjective homomorphism
$$\xi_{\Gamma}^0 : M_{\Gamma}^0 \rightarrow \tilde{H}_{\Gamma}$$
whose kernel is $R_{\Gamma}^0 = (M_{\Gamma}^0)^{\tau} + (M_{\Gamma}^0)^{\sigma} + \sum_{d \in (\Z/N\Z)^{\times}} \Zp \cdot [-d,d]$ where $(M_{\Gamma}^0)^{\tau}$ (resp. $(M_{\Gamma}^0)^{\sigma}$) is the subgroup of elements of $M_{\Gamma}^0$ fixed by the right action of $\tau$ (resp. $\sigma$).
\end{prop}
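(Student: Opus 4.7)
The plan is to combine Manin's presentation of $\tilde{H}_\Gamma'$ with a careful analysis of how the relations generated by $(1+\sigma)$ and $(1+\tau+\tau^2)$ interact with the decomposition $\Zp[\Gamma\backslash\PSL_2(\Z)] = M_\Gamma^0 \oplus M'$, where $M'$ is the $\Zp$-span of the ``boundary'' Manin symbols $[c,0]$ and $[0,d]$ (for $c, d \in (\Z/N\Z)^\times / D_\Gamma$).

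For surjectivity of $\xi_\Gamma^0$, the long exact sequence of the triple $(X_\Gamma, C_\Gamma, C_\Gamma^\infty)$ identifies $\tilde{H}_\Gamma$ with the kernel of the composition $\tilde{H}_\Gamma' \xrightarrow{\partial} \Zp[C_\Gamma]^0 \twoheadrightarrow \Zp[C_\Gamma^0]$. The boundary formulas recalled just before the proposition show that $\partial \xi_\Gamma([c,d]) \in \Zp[C_\Gamma^\infty]$ exactly when $[c,d] \in M_\Gamma^0$, so $\xi_\Gamma^0$ is well defined. Given $z \in \tilde{H}_\Gamma$, I lift it to $\xi_\Gamma(y)$ for some $y \in \Zp[\Gamma\backslash\PSL_2(\Z)]$ and decompose $y = y_0 + \sum_c m_c [c,0] + \sum_d n_d [0,d]$ with $y_0 \in M_\Gamma^0$. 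Since $(1+\sigma)[c,0] = [c,0] + [0,-c] \in \ker \xi_\Gamma$, replacing each $[c,0]$ by $-[0,-c]$ yields $y \equiv y_0 + \sum_d (n_d - m_{-d})[0,d]$ modulo $\ker \xi_\Gamma$. The hypothesis $\xi_\Gamma(y) \in \tilde{H}_\Gamma$ forces the $C_\Gamma^0$-projection of $\partial \xi_\Gamma(y)$ to vanish, giving $n_d - m_{-d} = 0$ for all $d$, and thus $z = \xi_\Gamma^0(y_0)$.

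For the kernel, the snake lemma yields $\ker \xi_\Gamma^0 = M_\Gamma^0 \cap (K_\sigma + K_\tau)$, where $K_\sigma := (1+\sigma)\Zp[\Gamma\backslash\PSL_2(\Z)]$ and $K_\tau := (1+\tau+\tau^2)\Zp[\Gamma\backslash\PSL_2(\Z)]$. The inclusion $R_\Gamma^0 \subseteq \ker \xi_\Gamma^0$ is direct: $\sigma$ preserves $M_\Gamma^0$ so $(M_\Gamma^0)^\sigma = (1+\sigma)M_\Gamma^0 \subseteq K_\sigma$ (using $p$ odd); a short analysis of the $\tau$-orbits meeting $M'$ shows that $(M_\Gamma^0)^\tau$ consists precisely of the ``nice'' $\tau$-sums $[c,d]+[d,-c-d]+[-c-d,c]$ with $c, d, c+d$ all non-zero mod $N$, and thus lies in $K_\tau$ (using $p \neq 3$); finally, $\xi_\Gamma([-d,d]) = 0$ follows by combining the $\sigma$-relation $\xi_\Gamma([-c,0]) = -\xi_\Gamma([0,c])$ with the $\tau$-relation $\xi_\Gamma([c,-c]) + \xi_\Gamma([-c,0]) + \xi_\Gamma([0,c]) = 0$.

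The reverse inclusion is the heart of the matter. Given $y = (1+\sigma)p + (1+\tau+\tau^2)q \in M_\Gamma^0$, I decompose $p = p_0 + p_1$ and $q = q_0 + q_1$ along $M_\Gamma^0 \oplus M'$, and expand the $M_\Gamma^0$- and $M'$-components of $y$ explicitly in the coordinates $\alpha^p_c, \beta^p_d$ of $p_1$, the coordinates $\alpha^q_c, \beta^q_d$ of $q_1$, and the coordinates $\gamma_{c,d}$ of $q_0$. Setting the $M'$-component of $y$ to zero yields a linear relation expressing the quantity $\delta_d := \gamma_{d,-d} + \alpha^q_{-d} + \beta^q_d$ as $-(\beta^p_d + \alpha^p_{-d})$. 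Substituting back, $y$ decomposes as $(1+\sigma)p_0 + A + \sum_d \delta_d[d,-d]$, with $(1+\sigma)p_0 \in (M_\Gamma^0)^\sigma$, $A \in (M_\Gamma^0)^\tau$ being the ``nice'' $\tau$-invariant part extracted from $q_0$, and the last term lying in $\sum_d \Zp \cdot [-d,d]$. The main technical obstacle is precisely this bookkeeping: one must track how the $\tau$-orbits $\{[c,-c],[-c,0],[0,c]\}$ that straddle $M_\Gamma^0$ and $M'$ produce a ``defect'' $\sum_d \gamma_{d,-d}[d,-d]$ in the $M_\Gamma^0$-component of $y$, and see that it is absorbed exactly by the extra generators $[-d,d]$ of $R_\Gamma^0$ (and not by the $\sigma$- or $\tau$-invariant parts alone).
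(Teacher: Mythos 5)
Your proposal is correct and follows essentially the same route as the paper: surjectivity by lifting to Manin's presentation and using the relation $\xi_\Gamma([d,0]+[0,d])=0$ together with the vanishing of the $C_\Gamma^0$-part of the boundary, and the kernel computation by decomposing a kernel element along $M_\Gamma^0\oplus M'$ and observing that the $\sigma$- and $\tau$-orbits straddling the two summands contribute exactly the extra generators $[-d,d]$. The only cosmetic difference is that you phrase Manin's kernel via the norm elements $(1+\sigma)$ and $(1+\tau+\tau^2)$ where the paper works directly with $\sigma$- and $\tau$-invariants; these agree since $p\geq 5$.
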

\begin{proof}
Let $\xi_{\Gamma}' = \xi_{\Gamma} \circ \kappa^{-1} :  \Zp[\left((\Z/N\Z)^2\backslash \{(0,0)\} \right) / D_{\Gamma}] \rightarrow \tilde{H}_{\Gamma}'$ and $\xi_{\Gamma}^0$ be the restriction of $\xi_{\Gamma}'$ to $M_{\Gamma}^0$. The computation of $\partial$ shows that $\xi_{\Gamma}^0$ takes values in $\tilde{H}_{\Gamma}$. 
Let $y \in \tilde{H}_{\Gamma}$. Since $\xi_{\Gamma}'$ is surjective, there is some element $x = \sum_{[c,d] \in  \left((\Z/N\Z)^2\backslash \{(0,0)\} \right) / D_{\Gamma}} \lambda_{[c,d]} \cdot [c,d] \in \Zp[\left((\Z/N\Z)^2\backslash \{(0,0)\} \right) / D_{\Gamma}]$ such that $\xi_{\Gamma}'(x)=y$. Since $\partial \xi_{\Gamma}'(x) \in \Zp[C_{\Gamma}^{\infty}]$, we have $\lambda_{[d,0]} = \lambda_{[0,d]}$ for all $d \in  \left((\Z/N\Z)^2\backslash \{(0,0)\} \right) / D_{\Gamma}$. Since $\xi_{\Gamma}'([0,d]+[d,0]) = 0$, the element $y$ is in the image of $\xi_{\Gamma}^0$. Thus, we have proved that $\xi_{\Gamma}^0$ is surjective.

Let $x = \sum_{[c,d] \in  \left((\Z/N\Z)^2\backslash \{(0,0)\} \right) / D_{\Gamma}} \lambda_{[c,d]} \cdot [c,d] - \mu_{[c,d]} \cdot [c,d] \in \Ker(\xi_{\Gamma}^0) = \Ker(\xi_{\Gamma}') \cap M_{\Gamma}^0$ with $\lambda_{[c,d]}=\lambda_{[c,d]\cdot \tau}$ and $\mu_{[c,d]}=\mu_{[c,d]\cdot \sigma}$ for all $[c,d] \in  \left((\Z/N\Z)^2\backslash \{(0,0)\} \right) / D_{\Gamma}$.  We also have $\lambda_{[d,0]} = \mu_{[d,0]}$ and $\lambda_{[0,d]} = \mu_{[0,d]}$ for all $d \in (\Z/N\Z)^{\times}/D_{\Gamma}$.
Note that for all $d \in (\Z/N\Z)^{\times}/D_{\Gamma}$, we have:
\begin{equation}\label{[-u,u]_relation}
[d,-d]=([d,0]+[0,d]+[d,-d])-([d,0]+[0,d]) \in \Ker(\xi_{\Gamma}^0) \text{ .}
\end{equation}
Hence, $x - \sum_{d \in (\Z/N\Z)^{\times}/D_{\Gamma}} \lambda_{[d,0]} \cdot [d,-d]$ belongs to $(M_{\Gamma}^0)^{\sigma} + (M_{\Gamma}^0)^{\tau}$ so $x$ has the desired form.

\end{proof}

\begin{corr}\label{surjection_homology}
The map $\tilde{H}_{\Gamma_1} \rightarrow \tilde{H}_{\Gamma_2}$ is surjective.
\end{corr}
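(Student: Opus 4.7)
The plan is to deduce this from Proposition \ref{generation_Manin_C_0^{(p^r)}} by comparing the Manin-type presentations of $\tilde{H}_{\Gamma_1}$ and $\tilde{H}_{\Gamma_2}$ through the natural degeneracy map.

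First I would observe that the inclusion $\Gamma_1 \subset \Gamma_2$ implies $D_{\Gamma_1} \subset D_{\Gamma_2}$, so there is a natural quotient map
$$q : \left((\Z/N\Z)^2 \setminus \{(0,0)\}\right)/D_{\Gamma_1} \twoheadrightarrow \left((\Z/N\Z)^2 \setminus \{(0,0)\}\right)/D_{\Gamma_2}$$
that respects the condition $cd \not\equiv 0 \pmod{N}$. Consequently, $q$ induces a surjection $M_{\Gamma_1}^0 \twoheadrightarrow M_{\Gamma_2}^0$ on the corresponding free $\Zp$-modules.

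Next I would check that the square
$$\begin{tikzcd} M_{\Gamma_1}^0 \arrow[r, twoheadrightarrow, "\xi_{\Gamma_1}^0"] \arrow[d, twoheadrightarrow] & \tilde{H}_{\Gamma_1} \arrow[d] \\ M_{\Gamma_2}^0 \arrow[r, twoheadrightarrow, "\xi_{\Gamma_2}^0"] & \tilde{H}_{\Gamma_2} \end{tikzcd}$$
commutes, where the right vertical arrow is the map induced by the covering $X_{\Gamma_1} \to X_{\Gamma_2}$. This is essentially a tautology: both $\xi_{\Gamma_1}^0([c,d])$ and $\xi_{\Gamma_2}^0([c,d])$ are defined as the class of $w_N\{g(0), g(\infty)\}$ for the same lift $g \in \SL_2(\Z)$ with bottom row $(c,d)$, and the covering map sends this geodesic path in $X_{\Gamma_1}$ to the same geodesic path in $X_{\Gamma_2}$; moreover the covering sends $C_{\Gamma_1}^{\infty}$ into $C_{\Gamma_2}^{\infty}$, so the relative homology class is preserved.

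Given the commutative square, the surjectivity of $\xi_{\Gamma_2}^0$ (from Proposition \ref{generation_Manin_C_0^{(p^r)}}) together with the surjectivity of the left vertical arrow forces the right vertical arrow $\tilde{H}_{\Gamma_1} \to \tilde{H}_{\Gamma_2}$ to be surjective as well. There is no real obstacle here; the only point that requires attention is the bookkeeping showing that $q$ restricts to a surjection on the submodules $M_{\Gamma_i}^0$, which amounts to checking that every coset in $M_{\Gamma_2}^0$ admits a representative $[c,d]$ with $cd \not\equiv 0 \pmod N$, and this is immediate from the definition.
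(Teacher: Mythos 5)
Your argument is correct and is exactly the proof the paper gives: the surjection $M_{\Gamma_1}^0 \twoheadrightarrow M_{\Gamma_2}^0$ combined with the surjectivity of $\xi_{\Gamma_2}^0$ from Proposition \ref{generation_Manin_C_0^{(p^r)}} and the (tautological) commutativity of the square yields the claim. The paper states this in two lines; you have merely spelled out the bookkeeping.
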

\begin{proof}
The map $M_{\Gamma_1}^0 \rightarrow M_{\Gamma_2}^0$ is surjective. We conclude using Proposition \ref{generation_Manin_C_0^{(p^r)}}.
\end{proof}

The ring $\Lambda_{\Gamma_i}$ acts naturally on $R_{\Gamma_i}^0$, $M_{\Gamma_i}$ and $\tilde{H}_{\Gamma_i}$ (for $i=1,2$).

\begin{prop}\label{odd_modSymb_refined_Hida}
\begin{enumerate}
\item The kernel of the homomorphism $\tilde{H}_{\Gamma_1} \rightarrow \tilde{H}_{\Gamma_2}$ is $J_{\Gamma_1 \rightarrow \Gamma_2}  \cdot \tilde{H}_{\Gamma_1}$. 
\item The kernel of the homomorphism $H_{\Gamma_1} \rightarrow H_{\Gamma_2}$ is $J_{\Gamma_1 \rightarrow \Gamma_2}  \cdot H_{\Gamma_1}$.
\end{enumerate}
\end{prop}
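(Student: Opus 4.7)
The inclusion $J_{\Gamma_1 \to \Gamma_2}\cdot \tilde H_{\Gamma_1} \subset \ker(\tilde H_{\Gamma_1} \to \tilde H_{\Gamma_2})$ (and similarly for $H$) is immediate from the $\Lambda_{\Gamma_1}$-equivariance of the degeneracy map, since the $\Lambda_{\Gamma_1}$-action on the target factors through $\Lambda_{\Gamma_2}$. For the reverse inclusions, my plan is to exploit the Manin presentation of Proposition~\ref{generation_Manin_C_0^{(p^r)}} for (i), and to combine (i) with the boundary map for (ii).

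For (i), I first observe that the diagonal action of $(\Z/N\Z)^{\times}/D_{\Gamma_1}$ on the set of $D_{\Gamma_1}$-orbits of pairs $(c,d)$ with $cd \not\equiv 0 \pmod N$ is free; this makes $M_{\Gamma_1}^0$ a free $\Lambda_{\Gamma_1}$-module and yields $\ker(\pi\colon M_{\Gamma_1}^0 \twoheadrightarrow M_{\Gamma_2}^0) = J_{\Gamma_1 \to \Gamma_2}\cdot M_{\Gamma_1}^0$. The proof then reduces to checking $R_{\Gamma_2}^0 \subset \pi(R_{\Gamma_1}^0)$: given $\bar x \in \ker(\tilde H_{\Gamma_1} \to \tilde H_{\Gamma_2})$ and a lift $x \in M_{\Gamma_1}^0$, one can find $r \in R_{\Gamma_1}^0$ with $\pi(x-r)=0$, hence $x - r \in J_{\Gamma_1 \to \Gamma_2}\cdot M_{\Gamma_1}^0$ and $\bar x \in J_{\Gamma_1\to \Gamma_2}\cdot \tilde H_{\Gamma_1}$. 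To verify $R_{\Gamma_2}^0 \subset \pi(R_{\Gamma_1}^0)$, I treat each generator type separately: the elements $[-d,d]$ lift trivially; a norm sum $[c,d] + [d,-c]$ coming from a non-$\sigma$-fixed component of $(M_{\Gamma_2}^0)^{\sigma}$ lifts to $[\tilde c, \tilde d] + [\tilde d, -\tilde c]$ in $(M_{\Gamma_1}^0)^{\sigma}$; and a $\sigma$-fixed generator $[c,d]$ (where $[c,d]=[d,-c]$ already in $M_{\Gamma_2}^0$) is the image of $\tfrac{1}{2}([\tilde c, \tilde d] + [\tilde d, -\tilde c]) \in (M_{\Gamma_1}^0)^{\sigma} \subset R_{\Gamma_1}^0$, which is legitimate since $2 \in \Zp^{\times}$ (using $p \geq 5$). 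The $\tau$-invariants are handled symmetrically, with $\tfrac{1}{3}$ in place of $\tfrac{1}{2}$.

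For (ii), I combine (i) with the short exact sequence of $\Lambda_\Gamma$-modules $0 \to H_\Gamma \to \tilde H_\Gamma \xrightarrow{\partial} \Zp[C_\Gamma^\infty]^0 \to 0$ coming from the long exact sequence of the pair $(X_\Gamma, C_\Gamma^\infty)$. Since $C_\Gamma^\infty$ is a torsor under $(\Z/N\Z)^{\times}/D_\Gamma$, a choice of basepoint identifies $\Zp[C_\Gamma^\infty]^0$ with the augmentation ideal of $\Lambda_\Gamma$, and the kernel of $\Zp[C_{\Gamma_1}^\infty]^0 \to \Zp[C_{\Gamma_2}^\infty]^0$ is exactly $J_{\Gamma_1 \to \Gamma_2}$. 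A snake-lemma argument, combined with part~(i) and Corollary~\ref{surjection_homology}, identifies $\ker(H_{\Gamma_1} \to H_{\Gamma_2})$ with $J_{\Gamma_1 \to \Gamma_2}\cdot \tilde H_{\Gamma_1} \cap H_{\Gamma_1}$. To show this equals $J_{\Gamma_1 \to \Gamma_2}\cdot H_{\Gamma_1}$, let $s$ generate $\ker((\Z/N\Z)^{\times}/D_{\Gamma_1} \to (\Z/N\Z)^{\times}/D_{\Gamma_2})$ and take $x = ([s]-1)y \in H_{\Gamma_1}$. Then $([s]-1)\partial y = 0$, and since $\Zp$ is torsion-free the kernel of $([s]-1)$ on $\Zp[C_{\Gamma_1}^\infty]^0$ equals $N_{\langle s\rangle}\cdot \Zp[C_{\Gamma_1}^\infty]^0$ where $N_{\langle s\rangle}=\sum_i [s^i]$ is the norm, so $\partial y$ lies in this subgroup. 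Using the surjectivity of $\partial$, one can find $z \in N_{\langle s\rangle}\tilde H_{\Gamma_1}\subset \tilde H_{\Gamma_1}^{\langle s\rangle}$ with $\partial z = \partial y$; then $y' := y - z \in H_{\Gamma_1}$ satisfies $([s]-1)y' = x$, witnessing $x \in J_{\Gamma_1 \to \Gamma_2}\cdot H_{\Gamma_1}$.

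The main obstacle is the treatment of the $\sigma$- and $\tau$-fixed generators in part~(i); this is precisely what requires the hypothesis $p \geq 5$ to invert $2$ and $3$ in $\Zp$.
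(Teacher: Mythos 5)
Your proposal is correct and follows essentially the same route as the paper: part (i) is the snake lemma applied to the Manin presentation of Proposition~\ref{generation_Manin_C_0^{(p^r)}}, with the key point being the surjectivity of $R_{\Gamma_1}^0 \rightarrow R_{\Gamma_2}^0$ (which the paper asserts tersely ``using $p>3$'' and you verify in more detail, correctly locating where $p\geq 5$ is used), and part (ii) reduces via the boundary exact sequence to the surjectivity of $\tilde{H}_{\Gamma_1}[J_{\Gamma_1\rightarrow\Gamma_2}] \rightarrow \Zp[C_{\Gamma_1}^{\infty}]^0[J_{\Gamma_1\rightarrow\Gamma_2}]$. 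Your construction of the norm element $z$ is just a hands-on version of the paper's snake-lemma step, where the paper exhibits the same norm applied to explicit Manin symbols $(\sum_{k}[d^{k-1}])\cdot[u,v]$.
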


\begin{proof}
We prove point (i).
Consider the following commutative diagram, where the rows are exact:
$$\xymatrix{
   0 \ar[r] &  R_{\Gamma_1}^0 \ar[r] \ar[d]  & M_{\Gamma_1}^0 \ar[r] \ar[d]  & \tilde{H}_{\Gamma_1}\ar[r] \ar[d]& 0  \\
    0 \ar[r] & R_{\Gamma_2}^0 \ar[r]  & M_{\Gamma_2}^0  \ar[r] & \tilde{H}_{\Gamma_2} \ar[r] & 0   
  } $$

It is clear that the kernel of the middle vertical arrow is $J_{\Gamma_1 \rightarrow \Gamma_2}\cdot M_{\Gamma_1}^0$. The cokernel of the left vertical map is zero by Proposition \ref{generation_Manin_C_0^{(p^r)}} (using $p>3$). The snake lemma concludes the proof of point (i).

We now prove point (ii). Using point (i), it suffices to show that $H_{\Gamma_1} \cap \left(J_{\Gamma_1\rightarrow \Gamma_2} \cdot \tilde{H}_{\Gamma_1}\right) = J_{\Gamma_1\rightarrow \Gamma_2}\cdot H_{\Gamma_1}$. Consider the following commutative diagram, where the rows are exact:
$$\xymatrix{
   0 \ar[r] &   H_{\Gamma_1} \ar[r] \ar[d]  & \tilde{H}_{\Gamma_1} \ar[r] \ar[d]  &  \Zp[C_{\Gamma_1}^{\infty}] ^0\ar[r] \ar[d]& 0  \\
    0 \ar[r] &  H_{\Gamma_1} \ar[r]  & \tilde{H}_{\Gamma_1}  \ar[r] &   \Zp[C_{\Gamma_1}^{\infty}]^0 \ar[r] & 0   
  }$$
Here, the vertical maps are induced by the action of $[d]-1$ where $d$ is a fixed generator of $\Ker((\Z/N\Z)^{\times}/D_{\Gamma_1} \rightarrow (\Z/N\Z)^{\times}/D_{\Gamma_2})$. Recall that $J_{\Gamma_1 \rightarrow \Gamma_2}$ is principal, generated by $[d]-1$. Thus, to prove (ii) it suffices to show (using the snake Lemma) that the map $\tilde{H}_{\Gamma_1}[J_{\Gamma_1 \rightarrow \Gamma_2}] \rightarrow \Zp[C_{\Gamma_1}^{\infty}] ^0[J_{\Gamma_1 \rightarrow \Gamma_2}]$ is surjective. 

It suffices to show that the boundary map $M_{\Gamma_1}^0[J_{\Gamma_1 \rightarrow \Gamma_2}] \rightarrow  \Zp[C_{\Gamma_1}^{\infty}] ^0[J_{\Gamma_1 \rightarrow \Gamma_2}]$ is surjective. Since we can identify $C_{\Gamma_1}^{\infty}$ with $(\Z/N\Z)^{\times}/D_{\Gamma_1}$, the action of $(\Z/N\Z)^{\times}/D_{\Gamma_1}$ on $C_{\Gamma_1}^{\infty}$ is free. Thus, any element of $\Zp[C_{\Gamma_1}^{\infty}] ^0[J_{\Gamma_1 \rightarrow \Gamma_2}]$ is of the form $\sum_{x \in C_{\Gamma_1}^{\infty}} \lambda_x \cdot (\sum_{k=0}^{m-1} [d^{k-1}])\cdot [x]$ where $m$ is the order of $d$ and $\sum_{x \in C_{\Gamma_1}^{\infty}} \lambda_x = 0$. Thus, $\Zp[C_{\Gamma_1}^{\infty}] ^0[J_{\Gamma_1 \rightarrow \Gamma_2}]$ is spanned over $\Zp$ by the elements $(\sum_{k=0}^{m-1} [d^{k-1}])\cdot ([u]-[v])$ for $u$, $v$ $\in C_{\Gamma_1}^{\infty}$. If we identify $u$ and $v$ with elements of $(\Z/N\Z)^{\times}/D_{\Gamma_1}$ and lift them to elements of $(\Z/N\Z)^{\times}$, $(\sum_{k=0}^{m-1} [d^{k-1}])\cdot ([u]-[v])$ is the boundary of the Manin symbol $(\sum_{k=0}^{m-1} [d^{k-1}])\cdot [u,v]$, which is annihilated by $J_{\Gamma_1 \rightarrow \Gamma_2}$. This concludes the proof of point (ii).
\end{proof}

\section{Eisenstein ideals of $X_1^{(p)}(N)$}\label{odd_modSymb_section_eisenstein_ideal}
We keep the notation of section \ref{odd_modSymb_section_hida} and add the following ones.
\begin{itemize}
\item $t$ is the $p$-adic valuation of $N-1$.
\item $P=(\Z/N\Z)^{\times}/\left( (\Z/N\Z)^{\times} \right)^{p^t}$.
 \item $P' =\left( (\Z/N\Z)^{\times} \right)^{p^t}$.
 \item $\Lambda^{(p)} = \Zp[P]$.
 \item $J^{(p)} \subset \Lambda^{(p)}$ is the augmentation ideal.
 \item $J_{(p)} = \Ker\left(\Zp[(\Z/N\Z)^{\times}] \rightarrow \Lambda^{(p)} \right)$.
 \item $\Gamma_1^{(p)}(N) \subset \Gamma_0(N)$ is the subgroup of $\Gamma_0(N)$ corresponding to the matrices whose diagonal entries are in $P'$ modulo $N$.
  \item If $\Gamma = \Gamma_1^{(p)}(N)$, we let $X_1^{(p)}(N)=X_{\Gamma}$, $\tilde{H}^{(p)} = \tilde{H}_{\Gamma}$, $\tilde{H}^{(p)}_+ = \left(\tilde{H}_{\Gamma}\right)_+$, $H^{(p)} = H_{\Gamma}$, $H^{(p)}_+ = \left(  H_{\Gamma} \right)_+$, $\widetilde{\mathbb{T}'}^{(p)} = \tilde{\mathbb{T}}_{\Gamma}'$, $\tilde{\mathbb{T}}^{(p)} = \tilde{\mathbb{T}}_{\Gamma}$, $\mathbb{T}^{(p)} = \mathbb{T}_{\Gamma}$, $C_0^{(p)} = C_{\Gamma}^0$ and $C_{\infty}^{(p)} = C_{\Gamma}^{\infty}$.
 \item If $\Gamma = \Gamma_0(N)$, we recall that $\tilde{H} = \tilde{H}_{\Gamma}$, $H = H_{\Gamma}$ and $\mathbb{T}= \mathbb{T}_{\Gamma}$.
 \item $\tilde{I}_{0}'$ is the ideal of $\widetilde{\mathbb{T}'}^{(p)}$ is generated by the operators $T_{n} - \sum_{d \mid n, \gcd(d,N)=1} \langle d \rangle \cdot d$. 
 \item We denote by $\tilde{I}_{0}$ (resp. $I_{0}$) the image of $\tilde{I}_{0}'$ in $\tilde{\mathbb{T}}^{(p)}$ (resp. $\mathbb{T}^{(p)}$).
\end{itemize}

The main goal of this section is to give an explicit description of $\tilde{H}^{(p)}/\tilde{I}_{0} \cdot \tilde{H}^{(p)}$. The Hecke algebra $\widetilde{\mathbb{T}'}^{(p)}$ (resp. $\tilde{\mathbb{T}}^{(p)}$, $\mathbb{T}^{(p)}$) acts faithfully on the space of modular forms of weight $2$ and level $\Gamma_1^{(p)}(N)$ (resp. which vanish at the cusps in $C_{\infty}^{(p)}$, resp. which are cuspidal). 

Let
$$ \zeta^{(p)} = \sum_{x \in (\Z/N\Z)^{\times}}  \B_2\left(\frac{x}{N}\right)\cdot [x] \in \Lambda^{(p)} $$
and
$$\nu^{(p)} =  \sum_{x \in P}  [x] \in \Lambda^{(p)} \text{ .}$$
Here, $\B_2(x) = (x-E(x))^2-(x-E(x))+\frac{1}{6}$ is the second periodic Bernoulli polynomial function ($E(x)$ is the integer part of $x$).

The following lemma will be useful in our proofs. It is an immediate consequence of Nakayama's lemma, since $\Lambda^{(p)}$ is a local ring.

\begin{lem}\label{odd_modSymb_Nakayama}
Let $f : M_1 \rightarrow M_2$ be a morphism of finitely generated $\Lambda^{(p)}$-modules. Let $\overline{f} :  M_1 \rightarrow M_2/J^{(p)}\cdot M_2$ be the map obtained from $f$. Then $f$ is surjective if and only if $\overline{f}$ is surjective.
\end{lem}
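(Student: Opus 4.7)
The plan is to reduce the statement to the classical Nakayama lemma applied to the local ring $\Lambda^{(p)}$. The forward implication is tautological, so only the converse requires work. The core point I would verify first is that $\Lambda^{(p)}$ really is local, since the hint in the statement rests entirely on this fact.

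To show locality, I would observe that $P = (\Z/N\Z)^{\times}/((\Z/N\Z)^{\times})^{p^t}$ is a finite abelian $p$-group: since $(\Z/N\Z)^{\times}$ is cyclic of order $N-1$ and $p^t$ is the exact power of $p$ dividing $N-1$, the quotient $P$ is cyclic of order $p^t$. Hence $\mathbf{F}_p[P]$ is a local ring (the group algebra of a finite $p$-group in characteristic $p$ is local, with maximal ideal equal to its augmentation ideal, which is nilpotent). Then $\Lambda^{(p)} = \Zp[P]$ is local too: its unique maximal ideal $\mathfrak{m}$ is the preimage of $p\Zp$ under the composite augmentation $\Lambda^{(p)} \twoheadrightarrow \Zp \twoheadrightarrow \Fp$. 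In particular,
\[ \mathfrak{m} = J^{(p)} + p\cdot\Lambda^{(p)}, \qquad \text{so } J^{(p)} \subset \mathfrak{m}. \]

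With locality in hand, the proof proceeds as follows. Assume $\overline{f}$ is surjective, which by definition means $M_2 = \mathrm{im}(f) + J^{(p)}\cdot M_2$. Setting $N := M_2/\mathrm{im}(f)$, this reads $N = J^{(p)} \cdot N$, hence a fortiori $N = \mathfrak{m}\cdot N$ since $J^{(p)} \subset \mathfrak{m}$. The module $N$ is finitely generated over $\Lambda^{(p)}$ as a quotient of $M_2$, so the classical Nakayama lemma over the local ring $(\Lambda^{(p)}, \mathfrak{m})$ gives $N = 0$, i.e., $f$ is surjective.

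The only step requiring a moment's thought is the locality of $\Lambda^{(p)}$; the rest is a direct application of Nakayama. No obstacle is expected. I would write the argument in one short paragraph, emphasizing that $J^{(p)}$ strictly sits inside the maximal ideal $\mathfrak{m}$ (the quotient $\Lambda^{(p)}/J^{(p)} \cong \Zp$ is itself local but not a field), so that the hypothesis $\overline{f}$ surjective is genuinely weaker than the hypothesis ``$f$ surjective modulo $\mathfrak{m}$'' — yet still enough to trigger Nakayama.
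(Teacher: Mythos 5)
Your proof is correct and follows exactly the paper's (one-line) argument: the paper simply observes that the lemma ``is an immediate consequence of Nakayama's lemma, since $\Lambda^{(p)}$ is a local ring,'' and you have supplied precisely the details behind that remark, including the correct identification of the maximal ideal $\mathfrak{m}=J^{(p)}+p\Lambda^{(p)}$. One small slip in your closing aside: surjectivity of $\overline{f}$ modulo $J^{(p)}$ is the \emph{stronger} hypothesis (it implies surjectivity modulo $\mathfrak{m}\supset J^{(p)}$), not the weaker one, though this does not affect the proof itself.
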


The following result is analogous to Mazur's computation of $\mathbb{T}/I$ \cite[Proposition II.9.7]{Mazur_Eisenstein}. In fact, our proof uses Mazur's results and techniques. 
\begin{thm}\label{odd_modSymb_Eisenstein_ideal_X1}
Assume $p \geq 5$.
\begin{enumerate}

\item\label{odd_modSymb_Eisenstein_ideal_X1_i} The map $\Lambda^{(p)}\rightarrow \tilde{\mathbb{T}}^{(p)}$ given by $[d] \mapsto \langle d \rangle$ gives an isomorphism of $\Lambda^{(p)}$-modules $$\Lambda^{(p)}/(\zeta^{(p)}) \xrightarrow{\sim} \tilde{\mathbb{T}}^{(p)}/\tilde{I}_{0} \text{ .}$$

\item\label{odd_modSymb_Eisenstein_ideal_X1_ii} The map $\Lambda^{(p)}\rightarrow \mathbb{T}^{(p)}$ given by $[d] \mapsto \langle d \rangle$ gives an isomorphism of $\Lambda^{(p)}$-modules $$\Lambda^{(p)}/\left(\zeta^{(p)}, \nu^{(p)}\right) \xrightarrow{\sim} \mathbb{T}^{(p)}/I_{0} \text{ .}$$

\item\label{odd_modSymb_Eisenstein_ideal_X1_iii} The groups $\tilde{\mathbb{T}}^{(p)}/\tilde{I}_{0}$ and $\mathbb{T}^{(p)}/I_{0}$ are finite.
\end{enumerate}
\end{thm}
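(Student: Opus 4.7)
The plan is to follow Mazur's computation of $\mathbb{T}/I$ (\cite[Proposition II.9.7]{Mazur_Eisenstein}), refined to keep track of the diamond operators indexed by $P$. The three key steps are: surjectivity (easy), identification of the cyclic $\Lambda^{(p)}$-module $M := \tilde{H}^{(p)}/\tilde{I}_{0}\tilde{H}^{(p)}$ with $\Lambda^{(p)}/(\zeta^{(p)})$ (the hard part), and a cyclic-module argument passing from this to the Hecke algebra quotient.

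Surjectivity of the maps in (\ref{odd_modSymb_Eisenstein_ideal_X1_i}) and (\ref{odd_modSymb_Eisenstein_ideal_X1_ii}) is immediate: modulo $\tilde{I}_{0}$ each $T_n$ equals the image of $\sigma_n := \sum_{d\mid n,\,(d,N)=1} d\cdot [d]\in\Lambda^{(p)}$ under $[d]\mapsto \langle d\rangle$, and both $\tilde{\mathbb{T}}^{(p)}$ and $\mathbb{T}^{(p)}$ are $\Zp$-generated by the Hecke operators and the diamonds. To see that $M$ is cyclic as a $\Lambda^{(p)}$-module, identify $\Zp[C_\infty^{(p)}]\cong \Lambda^{(p)}$ via $C_\infty^{(p)}\cong P$ (section \ref{odd_modSymb_section_hida}): an explicit Manin-symbol calculation using the boundary formulas of section \ref{odd_modSymb_section_hida} shows that $T_n$ acts on $\Zp[C_\infty^{(p)}]$ as multiplication by $\sigma_n$, so $\tilde{I}_{0}$ kills the augmentation $\Zp[C_\infty^{(p)}]^{0}\cong J^{(p)}$. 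The boundary then induces a surjection $M\twoheadrightarrow J^{(p)}$, and Lemma \ref{odd_modSymb_Nakayama} applied to a lift $e\in \tilde{H}^{(p)}$ of a generator of $J^{(p)}$ (a suitable winding element) yields the surjection $\Lambda^{(p)}\twoheadrightarrow M$ sending $[d]$ to $\langle d \rangle\cdot e$.

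The main obstacle is identifying the annihilator of $e$ in $\Lambda^{(p)}$ as $(\zeta^{(p)})$. I would prove this by an Eisenstein-series period computation: for each character $\chi$ of $(\Z/N\Z)^{\times}$ factoring through $P$, pair $e$ with the weight-$2$ Eisenstein series $E_{2,\chi}$. The resulting period is, up to an explicit unit, the generalized Bernoulli number $B_{2,\chi} = N\sum_x \chi(x)\B_2(x/N)$, which is the image of $\zeta^{(p)}$ under the character $\chi:\Lambda^{(p)}\to \Zp(\chi)$. For non-trivial $\chi$ (all characters of $P$ are even since $|P|=p^t$ is odd), $B_{2,\chi}\neq 0$ by classical non-vanishing results; for the trivial character one recovers Mazur's numerator $-(N-1)/(6N)\in p^t\Zp^{\times}$. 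These period identities show at once that $\zeta^{(p)}\cdot e \in \tilde{I}_{0}\tilde{H}^{(p)}$ and that nothing outside $(\zeta^{(p)})$ annihilates $e$, giving $M \cong \Lambda^{(p)}/(\zeta^{(p)})$ with $e\mapsto 1$.

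Part (\ref{odd_modSymb_Eisenstein_ideal_X1_i}) then follows by a cyclic-module argument: the composition $\Lambda^{(p)}/(\zeta^{(p)}) \twoheadrightarrow \tilde{\mathbb{T}}^{(p)}/\tilde{I}_{0} \to \End_{\Lambda^{(p)}}(M) = \Lambda^{(p)}/(\zeta^{(p)})$, where the first arrow is surjective (and well-defined because $\zeta^{(p)}$ kills $M$, whence $\zeta^{(p)}\cdot \tilde{H}^{(p)}\subset \tilde{I}_{0}\tilde{H}^{(p)}$ and the faithful action of $\tilde{\mathbb{T}}^{(p)}$ on $\tilde{H}^{(p)}$ forces the image of $\zeta^{(p)}$ into $\tilde{I}_{0}$) and the second is evaluation at $e$, is the identity map, so both arrows are isomorphisms. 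For (\ref{odd_modSymb_Eisenstein_ideal_X1_ii}) the same argument with $H^{(p)}$ replacing $\tilde{H}^{(p)}$ produces the extra relation $\nu^{(p)}=0$ via the exact sequence $0\to H^{(p)}\to \tilde{H}^{(p)}\to \Zp[C_\infty^{(p)}]^{0}\to 0$, since $\nu^{(p)}$ generates the annihilator of $J^{(p)}\cong \Zp[C_\infty^{(p)}]^{0}$ in $\Lambda^{(p)}$. Finally (\ref{odd_modSymb_Eisenstein_ideal_X1_iii}) is automatic: the image of $\zeta^{(p)}$ under every character of $P$ is non-zero by the character-by-character computation above, so $\zeta^{(p)}$ is a non-zero-divisor in $\Lambda^{(p)}$, making $\Lambda^{(p)}/(\zeta^{(p)})$ (and a fortiori $\Lambda^{(p)}/(\zeta^{(p)},\nu^{(p)})$) finite.
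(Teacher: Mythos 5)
Your architecture (a cyclic module generated by a winding element, then an endomorphism-ring argument to recover the Hecke algebra) is plausible, and your surjectivity and finiteness steps match the paper's. But the step that carries all the content --- pinning the annihilator of $e$ down to exactly $(\zeta^{(p)})$ --- is not delivered by the period computation you propose. A pairing of $e$ against $E_{2,\chi}$ takes values in $\C$ and can only yield statements after inverting $p$; since $\tilde{H}^{(p)}_+/\tilde{I}_0\cdot\tilde{H}^{(p)}_+$ is finite, so that $\tilde{I}_0\cdot\tilde{H}^{(p)}\otimes\Qp=\tilde{H}^{(p)}\otimes\Qp$, the non-vanishing of $B_{2,\chi}$ proves finiteness but cannot identify the integral ideal. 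Your own mechanism is in fact internally inconsistent: if the pairing with $E_{2,\chi}$ vanished on $\tilde{I}_0\cdot\tilde{H}^{(p)}$ and $\langle e,E_{2,\chi}\rangle\neq 0$, you would conclude that \emph{no} nonzero element of $\Lambda^{(p)}$ annihilates $e$, contradicting your simultaneous claim that $\zeta^{(p)}\cdot e\in\tilde{I}_0\cdot\tilde{H}^{(p)}$. (There is also a convergence problem: $\int_0^{i\infty}E_{2,\chi}(z)\,dz$ diverges because these Eisenstein series vanish at neither $0$ nor $\infty$.) The integral statement is exactly where the paper works: it constructs the Eisenstein family $F_0$ over $\Lambda^{(p)}$ with constant term $-\tfrac{N}{4}\zeta^{(p)}$ and $\langle d\rangle F_0=[d]F_0$, characterizes $\Ker(\Lambda^{(p)}\to\tilde{\mathbb{T}}^{(p)}/\tilde{I}_0)$ as the largest ideal modulo which that constant is the $q$-expansion of such a form, and then kills constant forms by Mazur's Lemma II.5.9/Corollary II.5.11 bootstrapped along the $J^{(p)}$-adic filtration (Lemma \ref{odd_modSymb_constant_modular_form}). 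No archimedean period identity substitutes for this.

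Three further points would need repair even if the annihilator were granted. First, $\tilde{I}_0$ does not kill $\Zp[C_\infty^{(p)}]^0$: in this paper's conventions $T_\ell-\ell-\langle\ell\rangle$ annihilates $C_\infty^{(p)}$, so the generator $T_\ell-1-\ell\langle\ell\rangle$ of $\tilde{I}_0$ acts on the boundary as $-(\ell-1)(\langle\ell\rangle-1)\in J^{(p)}$; your boundary map therefore only surjects onto $J^{(p)}/(J^{(p)})^2$, not $J^{(p)}$, and the Nakayama step as written does not produce a generator of $M$. Second, the full module $M=\tilde{H}^{(p)}/\tilde{I}_0\cdot\tilde{H}^{(p)}$ is not cyclic over $\Lambda^{(p)}$: by Eichler--Shimura the cuspidal part of $\tilde{H}^{(p)}$ has generic rank $2$ over the Hecke algebra, so you must pass to the plus (or minus) part, as the paper does in Theorem \ref{odd_modSymb_structure_H_1}. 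Third, the inference ``$\zeta^{(p)}\cdot\tilde{H}^{(p)}\subset\tilde{I}_0\cdot\tilde{H}^{(p)}$ and faithfulness force the image of $\zeta^{(p)}$ into $\tilde{I}_0$'' is a non sequitur: faithfulness excludes $T\cdot\tilde{H}^{(p)}=0$, not $T\cdot\tilde{H}^{(p)}\subset\mathfrak{a}\cdot\tilde{H}^{(p)}$; the implication you want amounts to freeness of the homology over the Hecke algebra, which the paper proves only afterwards and as a consequence of the present theorem. For part (\ref{odd_modSymb_Eisenstein_ideal_X1_ii}), note also that the paper does not argue via the annihilator of the boundary but via Weisinger's formula for the constant term of $w_N(F_0)$, which is how the relation $\nu^{(p)}$ actually appears.
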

\begin{proof}
The assertion (\ref{odd_modSymb_Eisenstein_ideal_X1_iii}) follows from (\ref{odd_modSymb_Eisenstein_ideal_X1_i}), (\ref{odd_modSymb_Eisenstein_ideal_X1_ii}) and the well-known property of Stickelberger elements (non vanishing of $L(\chi,2)$ for any even Dirichlet character $\chi$).

Let $$E_0:=\sum_{n \geq 1} \left(\sum_{d \mid n, \atop \gcd(d,N)=1}  [d]\cdot d\right)\cdot q^n  \in \Lambda^{(p)}[[q]]$$.

For any non-trivial character $\epsilon : P \rightarrow \mathbf{C}^{\times}$, the element $$-\frac{N}{4}\cdot \left(\sum_{x \in (\Z/N\Z)^{\times}} \epsilon(x)\cdot \B_2(\frac{x}{N})\right) + \sum_{n \geq 1} \left(\sum_{d \mid n, \atop \gcd(d,N)=1}  \epsilon(d) \cdot d\right)\cdot q^n \in \mathbf{C}[[q]]$$ is the $q$-expansion at the cusp $\infty$ of an Eisenstein series of weight $2$ and level $\Gamma_1^{(p)}(N)$, which we denote by $E_{1, \epsilon}$ (\cf for instance \cite[Theorem $4.6.2$]{Diamond_Shurman}). Furthermore, we have already seen that $\frac{N-1}{24} + \sum_{n \geq 1} \left(\sum_{d \mid n, \gcd(d,N)=1} d\right)\cdot q^n$ is the $q$-expansion at the cusp $\infty$ of an Eisenstein series of level $\Gamma_0(N)$, denoted by $E_2$.  Fix an embedding of $\overline{\mathbf{Q}}_p \hookrightarrow \mathbf{C}$. We get a natural injective ring homomorphism $\iota : \Lambda^{(p)}\rightarrow \prod_{\epsilon \in \hat{P}} \mathbf{C}$ where $\hat{P}$ is the set of characters of $P$. Thus, we have shown that $-\frac{N}{4}\cdot \zeta^{(p)} + E_{0} \in \Lambda^{(p)}[[q]]$ is the $q$-expansion at the cusp $\infty$ of a modular form of weight $2$ and level $\Gamma_1^{(p)}(N)$ over $\prod_{\epsilon \in \hat{P}} \mathbf{C}$. By the $q$-expansion principle \cite[Corollary 1.6.2]{Katz_properties}, such a modular form is over $\Lambda^{(p)}$. We denote it by $F_0$. Since for all $d \in P$ we have $\langle d \rangle E_{1,\epsilon} = \epsilon(d) \cdot E_{1,\epsilon}$, the $q$-expansion principle shows that
$\langle d \rangle  F_{0} = [d]\cdot F_{0}$.

We now prove point (\ref{odd_modSymb_Eisenstein_ideal_X1_i}). The map $\Lambda^{(p)} \rightarrow \tilde{\mathbb{T}}^{(p)}/\tilde{I}_0$ is surjective (by definition of $\tilde{I}_0$). Let $K$ denote its kernel. Let $\overline{E}_0$ be the image of $E_0$ in $(\Lambda^{(p)}/K)[[q]]$. Then $\overline{E}_0$ is the $q$-expansion at $\infty$ of a modular form (still denoted by $\overline{E}_0$) satisfying $\langle d \rangle \cdot \overline{E}_0 = [d] \cdot \overline{E}_0$ for all $d\in P$. Furthermore, $K$ is the largest ideal of $\Lambda^{(p)}$ satisfying this property. By the discussion above, we have proved that $K$ is the largest ideal of $\Lambda^{(p)}$ such that $-\frac{N}{4}\cdot \zeta^{(p)} \in \Lambda^{(p)}/K$ is the $q$-expansion at $\infty$ of a modular form $F$ over $\Lambda^{(p)}/K$ satisfying  $\langle d \rangle \cdot F = [d] \cdot F$ for all $d\in P$.

\begin{lem}\label{odd_modSymb_constant_modular_form}
Let $I$ be an ideal of $\Lambda^{(p)}$ and $G \in \Lambda^{(p)}/I$. Assume that $G$ is the $q$-expansion of a modular form of weight $2$ and level $\Gamma_1^{(p)}(N)$ over $\Lambda^{(p)}/I$ such that for all $d \in P$, we have $\langle d \rangle \cdot G = [d] \cdot G$. Then we have $G=0$.
\end{lem}
\begin{proof}
For simplicity, we denote $J^{(p)}$ by $J$ in this proof.
We prove by induction on $n\geq 0$ that $G \in J^n \cdot (\Lambda^{(p)}/I)$. This is true if $n=0$. Assume that this is true for some $n \geq 0$. By the $q$-expansion principle, $G$ is the $q$-expansion at the cusp $\infty$ of a modular form over the $\Z[\frac{1}{N}]$-module $J^n \cdot (\Lambda^{(p)}/I)$ (\cf \cite[Section 1.6]{Katz_properties} for the notion of a modular form over an abelian group). Let $\overline{G}$ be the image $G$ by the map $J^n \cdot (\Lambda^{(p)}/I) \twoheadrightarrow J^n \cdot (\Lambda^{(p)}/I)/J^{n+1} \cdot (\Lambda^{(p)}/I) $. The diamond operators act trivially on $\overline{G}$. Thus, $\overline{G}$ is the $q$-expansion at the cusp $\infty$ of a modular form of weight $2$ and level $\Gamma_0(N)$ with coefficients in the module $J^n \cdot (\Lambda^{(p)}/I)/J^{n+1} \cdot (\Lambda^{(p)}/I) $. Note that $J^n \cdot (\Lambda^{(p)}/I)/J^{n+1} \cdot (\Lambda^{(p)}/I) $ is a quotient of $J^n/J^{n+1} \simeq \Z/p^t\Z$. Since $p \geq 5$ and $\gcd(N,p)=1$, \cite[Lemma 5.9, Corollary 5.11]{Mazur_Eisenstein} shows that $\overline{G}=0$, \ie we have $G \in J^{n+1} \cdot (\Lambda^{(p)}/I)$. This concludes the induction step. Since $\bigcap_{n \geq 0} J^n \cdot (\Lambda^{(p)}/I) = 0$, we have $G=0$. This concludes the proof of Lemma \ref{odd_modSymb_constant_modular_form}.
\end{proof}
By Lemma \ref{odd_modSymb_constant_modular_form}, we have $F=0$. This proves that $K = (-\frac{N}{4}\cdot \zeta^{(p)})=(\zeta^{(p)})$, which concludes the proof of Theorem \ref{odd_modSymb_Eisenstein_ideal_X1} (\ref{odd_modSymb_Eisenstein_ideal_X1_i}).

We finally prove Theorem \ref{odd_modSymb_Eisenstein_ideal_X1} (\ref{odd_modSymb_Eisenstein_ideal_X1_ii}). Let $w_N$ be the Atkin--Lehner involution. 
By \cite[Proposition 1]{Weisinger_thesis}, the $q$-expansion at the cusp $\infty$ of $w_N(E_{1, \epsilon})$ is
$$\left(\sum_{x \in (\Z/N\Z)^{\times}} \epsilon(x) \cdot e^{\frac{2 i \pi x }{N}} \right) \cdot \left(\sum_{n \geq 1} \sum_{d \mid n \atop \gcd(d,N)=1}  \epsilon(d)^{-1} \cdot \frac{n}{d} \cdot q^n \right) \in \Lambda^{(p)}[[q]] \text{ .}$$
Furthermore, we have $w_N(E_2) = -E_2$.

Let $\mu \in \overline{\Z}_p$ be the primitive $N$th root of unity corresponding to $e^{\frac{2 i \pi}{N}}$ under our fixed embedding $\overline{\mathbf{Q}}_p \hookrightarrow \mathbf{C}$. Let $\Lambda^{(p)}{}' = (\Zp[\mu])[P]$ and $\mathcal{G'} = \sum_{x \in (\Z/N\Z)^{\times}} \mu^x \cdot [x] \in \Lambda^{(p)}{}'$. The element $\mathcal{G}'$ is invertible in $\Lambda^{(p)}{}'$ since its degree is $-1$, which is prime to $p$, and $\Lambda^{(p)}{}'$ is a local ring whose maximal ideal is $J'+(\varpi)$ where $J'$ is the augmentation ideal of $\Lambda^{(p)}{}'$ and $\varpi$ is a uniformizer of $\Zp[\mu]$.

By reformulating Weisinger's formula, the $q$-expansion principle shows that the $q$-expansion at the cusp $\infty$ of $F_{\infty}:=w_N(F_{0})$ is 
$$
-\frac{N-1}{24\cdot p^t}\cdot \nu^{(p)} + \mathcal{G}' \cdot \sum_{n\geq 1} \left(\sum_{d \mid n, \text{ gcd}(d,N)=1}  [d]^{-1} \cdot \frac{n}{d} - a_n \right)\cdot q^n \text{ , }
$$
where if $n=N^v\cdot n_0$ with $v\in \mathbf{Z}_{\geq 0}$ and $\gcd(n_0, N)=1$, we let 
$$a_n = \nu^{(p)}\cdot \frac{N^v-1}{p^t} \cdot \sum_{d \mid n_0} d \in \Lambda^{(p)}\text{ .}$$

The modular form $F_0$ is cuspidal modulo some ideal $I$ of $\Lambda^{(p)}$ if and only if $I$ contains $\zeta^{(p)}$ and $a_0(F_{\infty})$. This concludes the proof of Theorem \ref{odd_modSymb_Eisenstein_ideal_X1}.
\end{proof}

\section{The extended winding homomorphism}
In the following result, we extend the winding homomorphism of Mazur \cite[p. 137]{Mazur_Eisenstein}.

\begin{thm}\label{odd_modSymb_structure_H_1}
\begin{enumerate}
\item\label{odd_modSymb_structure_H_1_i} The $\tilde{\mathbb{T}}^{(p)}/\tilde{I}_0$-module $\tilde{I}_0/\tilde{I}_0^2$ is free of rank $1$, \ie $\tilde{I}_0$ is locally principal. Consequently, the completion of $\tilde{\mathbb{T}}^{(p)}$ at $\tilde{I}_0$ is complete intersection.

\item \label{odd_modSymb_structure_H_1_ii} There is a canonical group isomorphism $$\tilde{I}_0/\tilde{I}_0^2 \xrightarrow{\sim} \tilde{H}^{(p)}_+/\tilde{I}_{0} \cdot  \tilde{H}^{(p)}_+ \text{ .}$$
In particular, the $\Lambda^{(p)}$-module $\tilde{H}^{(p)}_+/\tilde{I}_{0} \cdot  \tilde{H}^{(p)}_+$ is (non-canonically) isomorphic to $\Lambda^{(p)}/(\zeta^{(p)})$. 
\item \label{odd_modSymb_structure_H_1_iii} The $\Lambda^{(p)}$-module $H^{(p)}_+/I_0 \cdot  H^{(p)}_+$ is isomorphic to $\Lambda^{(p)}/(\zeta^{(p)}, \nu^{(p)})$. 
\end{enumerate}
\end{thm}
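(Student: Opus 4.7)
The plan is to adapt Mazur's winding-homomorphism argument \cite[\S II.18]{Mazur_Eisenstein} from $\Gamma_0(N)$ to $\Gamma_1^{(p)}(N)$, using the refined Hida control of Section \ref{odd_modSymb_section_hida} to reduce modulo the augmentation ideal $J^{(p)}$ to Mazur's original setting, and using Theorem \ref{odd_modSymb_Eisenstein_ideal_X1} to count orders. The central construction is a winding element $e \in \tilde H^{(p)}_+$, built from a suitable Manin symbol via Proposition \ref{generation_Manin_C_0^{(p^r)}} and normalized so that its image in $\tilde H_+ = H_+$ under the degeneracy map to $X_0(N)$ (whose kernel equals $J^{(p)}\cdot \tilde H^{(p)}_+$ by Proposition \ref{odd_modSymb_refined_Hida}) is Mazur's winding element $e_0 \in H_+$.

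With $e$ fixed, I form the winding homomorphism $\phi \colon \tilde{\mathbb{T}}^{(p)} \to \tilde H^{(p)}_+$, $T \mapsto T\cdot e$, and consider the induced $\Lambda^{(p)}$-linear map $\bar\phi \colon \tilde{\mathbb{T}}^{(p)}/\tilde I_0 \to \tilde H^{(p)}_+/\tilde I_0 \cdot \tilde H^{(p)}_+$. The key task is to prove $\bar\phi$ is an isomorphism. For surjectivity: since $\Lambda^{(p)}$ is local with maximal ideal $(p, J^{(p)})$ and the target is a finitely generated $\Lambda^{(p)}$-module, Lemma \ref{odd_modSymb_Nakayama} reduces the statement to its reduction modulo $(p, J^{(p)})$, which via the degeneracy unwinds to Mazur's classical fact that $e_0$ generates the one-dimensional $\mathbf{F}_p$-space $H_+/(I + p)\cdot H_+$. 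For injectivity: Theorem \ref{odd_modSymb_Eisenstein_ideal_X1} (\ref{odd_modSymb_Eisenstein_ideal_X1_i}) identifies $|\tilde{\mathbb{T}}^{(p)}/\tilde I_0|$ with $|\Lambda^{(p)}/(\zeta^{(p)})|$, and a matching lower bound on $|\tilde H^{(p)}_+/\tilde I_0 \cdot \tilde H^{(p)}_+|$ is produced by the $q$-expansion pairing between $\tilde H^{(p)}_+$ and the Eisenstein part of the space of weight-$2$ modular forms of level $\Gamma_1^{(p)}(N)$ over $\Lambda^{(p)}$, reusing the Katz integrality input that already underlies the proof of Theorem \ref{odd_modSymb_Eisenstein_ideal_X1}.

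Once $\bar\phi$ is an isomorphism, part (\ref{odd_modSymb_structure_H_1_ii}) is immediate (together with its canonical iso to $\tilde I_0/\tilde I_0^2$, canonical once $e$ is fixed). For part (\ref{odd_modSymb_structure_H_1_i}), the cyclicity of $\tilde H^{(p)}_+$ over $\tilde{\mathbb{T}}^{(p)}$ at $\tilde I_0$ (another application of Nakayama), combined with the faithfulness of the Hecke action on the $+$-eigenspace (available for $p\geq 5$ thanks to the involution idempotent $(1+\iota)/2$), implies that the $\tilde I_0$-adic completion of $\tilde H^{(p)}_+$ is free of rank one over the corresponding completion of $\tilde{\mathbb{T}}^{(p)}$. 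Hence $\tilde I_0/\tilde I_0^2$ is free of rank one over $\tilde{\mathbb{T}}^{(p)}/\tilde I_0$, from which the complete-intersection property follows by standard commutative algebra. For part (\ref{odd_modSymb_structure_H_1_iii}), I would repeat the same argument with a cuspidal winding element $e' \in H^{(p)}_+$ lifting $e_0$ through the surjection $H^{(p)}_+ \twoheadrightarrow H_+$ (the closed analogue of Corollary \ref{surjection_homology}, which follows via the snake lemma from the boundary sequence and Proposition \ref{odd_modSymb_refined_Hida}), together with the cuspidal Hecke algebra $\mathbb{T}^{(p)}$ and Theorem \ref{odd_modSymb_Eisenstein_ideal_X1} (\ref{odd_modSymb_Eisenstein_ideal_X1_ii}), which furnishes the extra relation $\nu^{(p)}$.

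The principal obstacle will be the injectivity of $\bar\phi$: producing the matching lower bound on $|\tilde H^{(p)}_+/\tilde I_0 \cdot \tilde H^{(p)}_+|$ requires a careful pairing/duality argument that extends Mazur's original computation on $\Gamma_0(N)$ to the $\Lambda^{(p)}$-structure present at level $\Gamma_1^{(p)}(N)$, and constitutes the technical heart of the theorem.
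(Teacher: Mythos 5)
Your overall strategy (winding map, Nakayama via Lemma \ref{odd_modSymb_Nakayama} and the refined Hida control, order/rank comparison against Theorem \ref{odd_modSymb_Eisenstein_ideal_X1}) is the same as the paper's, but two steps as you describe them do not go through.

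First, the injectivity of $\bar\phi$, which you correctly identify as the crux and then leave open. You propose to produce a matching \emph{lower bound on the cardinality} of $\tilde{H}^{(p)}_+/\tilde I_0\cdot\tilde H^{(p)}_+$ by a $q$-expansion pairing over $\Lambda^{(p)}$; no such computation is needed, and attempting it would essentially reprove Theorem \ref{odd_modSymb_Eisenstein_ideal_X1} on the homology side. The paper instead completes at the Eisenstein maximal ideal and compares $\Zp$-\emph{ranks}: both the source and the target of the completed winding map are free $\Zp$-modules whose rank equals $\rk_{\Zp}\tilde{\mathbf{T}}^{(p)}$ (for the target this is Eichler--Shimura over $\C$, identifying $\tilde H^{(p)}_+\otimes\C$ with the full space of weight-$2$ forms), and a surjection between free $\Zp$-modules of the same finite rank is an isomorphism. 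This rank argument is the missing idea; once you have it, the surjectivity you establish via Nakayama already finishes the job. Relatedly, the paper's winding map is $\tilde e:\tilde I_0\to\tilde H^{(p)}_+$, $\eta\mapsto\eta\cdot\{0,\infty\}$ (well-definedness checked using the Eisenstein series with divisor $(0)-(\infty)$), not $T\mapsto T\cdot e$ out of the whole Hecke algebra for a chosen $e\in\tilde H^{(p)}_+$. Your version presupposes a single generator $e$ of $\tilde{\mathbf{H}}^{(p)}_+$ over $\tilde{\mathbf{T}}^{(p)}$, which is close to what is being proven, and ``canonical once $e$ is fixed'' is not the canonicity asserted in (\ref{odd_modSymb_structure_H_1_ii}); the class $\{0,\infty\}$ is what makes the isomorphism $\tilde I_0/\tilde I_0^2\simeq\tilde H^{(p)}_+/\tilde I_0\cdot\tilde H^{(p)}_+$ canonical.

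Second, your plan for (\ref{odd_modSymb_structure_H_1_iii}) rests on a surjection $H^{(p)}_+\twoheadrightarrow H_+$ through which you would lift $e_0$. No such surjection exists: by Mazur's Lemma II.18.7 (used in the paper in Proposition \ref{odd_modSymb_construction}) the image of $H^{(p)}_+\to H_+$ is $I\cdot H_+$, whose quotient $H_+/I\cdot H_+\simeq\Z/p^t\Z$ is nonzero; in particular a local generator of $\mathbf{H}_+$ cannot be lifted. Corollary \ref{surjection_homology} is a statement about the \emph{relative} homology groups and does not transfer to the closed curves. The repair is the paper's route: show that $I\cdot H_+$ is $\mathbb{T}$-free of rank one (Mazur's multiplicity-one results at each maximal ideal), deduce from Proposition \ref{odd_modSymb_refined_Hida} and Nakayama that $H^{(p)}_+$ is monogenic over $\mathbb{T}^{(p)}$, and conclude freeness by equality of $\Zp$-ranks, after which Theorem \ref{odd_modSymb_Eisenstein_ideal_X1} (\ref{odd_modSymb_Eisenstein_ideal_X1_ii}) gives $H^{(p)}_+/I_0\cdot H^{(p)}_+\simeq\Lambda^{(p)}/(\zeta^{(p)},\nu^{(p)})$.
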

\begin{proof} 
We first define a homomorphism of $\Lambda^{(p)}$-modules $\tilde{e}: \tilde{I}_{0} \rightarrow \tilde{H}^{(p)}_+$ as follows.

We have a map $\tilde{I}_{0}' \rightarrow \tilde{H}^{(p)}_+$
given by $\eta \mapsto \eta\cdot \{0,\infty\}$. 
This induces the desired map $\tilde{I}_{0} \rightarrow \tilde{H}^{(p)}_+$. 
Indeed, if $\eta \in \tilde{I}_{0}'$ maps to $0$ in $\tilde{I}_{0}$ then $\eta$ annihilates all the Eisenstein series of $M_2(\Gamma_1^{(p)}(N), \mathbf{C})$. There is an Eisenstein series $E \in M_2(\Gamma_1^{(p)}(N), \mathbf{C})$ such that the divisor of the meromorphic differential form $E(z)dz$ is $(0)-(\infty)$. This Eisenstein series induces (via integration) a morphism $H_1(Y_1^{(p)}(N), \Z) \rightarrow \mathbf{C}$. By intersection duality, we get an element $\mathcal{E} \in H_1(X_1^{(p)}(N), C_0^{(p)}\cup C_{\infty}^{(p)}, \mathbf{C})$. Since $E$ is annihilated by $\eta^*:=w_N\eta w_N^{-1}$, the element $\mathcal{E}$ is annihilated by $\eta$. Since $\mathcal{E} - \{0,\infty\} \in H_1(X_1^{(p)}(N), \mathbf{C})$ and $\eta$ acts trivially on $H_1(X_1^{(p)}(N), \mathbf{C})$, we see that $\eta \cdot \{0,\infty\}=0$. 

Let $e: I \rightarrow H_+$ be the winding homomorphism of Mazur, denoted by $e_+$ in \cite[Definition, p. 137]{Mazur_Eisenstein}. We denote by a bold letter the various $\tilde{\mathbb{T}}^{(p)}$ or $\mathbb{T}$-modules involved completed at $\tilde{I}_{0}$ or at $I$. Thus, for example, $\tilde{\textbf{H}}^{(p)}_+$ (resp. $\textbf{H}_+$) is the $\tilde{I}_0$ (resp. $I$)-adic completion of $\tilde{H}^{(p)}_+$ (resp. $H_+$). Let $\tilde{\textbf{e}}: \tilde{\textbf{I}}_{0} \rightarrow \tilde{\textbf{H}}^{(p)}_+$ (resp. $\textbf{e} : \textbf{I} \rightarrow \textbf{H}_+ $) be the map obtained after completion at the ideal $\tilde{I}_{0}$ (resp. $I$).

\begin{lem}\label{odd_modSymb_completion_projection}
The map $\tilde{H}^{(p)}_+ \rightarrow H_+$ defines by passing to completion a group isomorphism $\tilde{\textbf{H}}^{(p)}_+/J\cdot \tilde{\textbf{H}}^{(p)}_+ \xrightarrow{\sim} \textbf{H}_+$.
\end{lem}
\begin{proof}
The rings $\tilde{\mathbb{T}}^{(p)}$ and $\mathbb{T}$ are semi-local and $p$-adically complete, so we have $$\tilde{\mathbb{T}}^{(p)} = \bigoplus_{\tilde{\mathfrak{m}} \in \text{SpecMax}(\tilde{\mathbb{T}}^{(p)})} (\tilde{\mathbb{T}}^{(p)})_{\tilde{\mathfrak{m}}}$$ and $$\mathbb{T} = \bigoplus_{\mathfrak{m} \in \text{SpecMax}(\mathbb{T}) } \mathbb{T}_{\mathfrak{m}}$$ where the subscript means the completion. By Theorem \ref{odd_modSymb_Eisenstein_ideal_X1} (\ref{odd_modSymb_Eisenstein_ideal_X1_i}), there is a unique maximal ideal $\tilde{\mathfrak{m}}_0 \in \text{SpecMax}(\tilde{\mathbb{T}}^{(p)})$ containing $\tilde{I}_0$. Similarly, there exists a unique maximal ideal $\mathfrak{m} \in \text{SpecMax}(\mathbb{T})$ containing Mazur's Eisenstein ideal $I$. We denote by $e_{\tilde{\mathfrak{m}}_0}$ (resp. $e_{\mathfrak{m}}$) the idempotent of $\tilde{\mathbb{T}}^{(p)}$ (resp. $\mathbb{T}$) corresponding to $\tilde{\mathfrak{m}}_0$ (resp. $\mathfrak{m}$). The image of $e_{\tilde{\mathfrak{m}}_0}$ in $\mathbb{T}$ is $e_{\mathfrak{m}}$. We have $\tilde{\textbf{H}}^{(p)}_+ = e_{\tilde{\mathfrak{m}}_0}\cdot \tilde{H}^{(p)}_+$ and $\textbf{H}_+ = e_{\mathfrak{m}} \cdot H_+$. Since $\tilde{H}^{(p)}_+/J\cdot \tilde{H}^{(p)}_+ = H_+$ by Proposition \ref{odd_modSymb_refined_Hida}, we have:
$$\tilde{\textbf{H}}^{(p)}_+/J\cdot \tilde{\textbf{H}}^{(p)}_+ =e_{\tilde{\mathfrak{m}}_0} \cdot (\tilde{H}^{(p)}_+/J\cdot \tilde{H}^{(p)}_+)  = e_{\mathfrak{m}} \cdot H_+ = \textbf{H}_+ \text{ .} $$
This concludes the proof of Lemma \ref{odd_modSymb_completion_projection}.
\end{proof}

Thus, we get a commutative diagram:
$$\xymatrix{
 \tilde{\textbf{I}}_{0} \ar[r]^{\tilde{\textbf{e}}} \ar[d]  & \tilde{\textbf{H}}^{(p)}_+  \ar[d]  \\
     \textbf{I} \ar[r]^{\textbf{e}} & \textbf{H}_+ }$$

Since $\textbf{e}$ is surjective (it is even an isomorphism by \cite[Theorem $18.10$]{Mazur_Eisenstein}) and the map $\tilde{\textbf{I}}_{0}\rightarrow \textbf{I}$ is surjective, Lemmas \ref{odd_modSymb_Nakayama} and \ref{odd_modSymb_completion_projection} show that $\tilde{\textbf{e}}$ is surjective. By the Eichler--Shimura isomorphism (over $\mathbf{C}$), the $\Zp$-rank of these two modules must be equal to the $\Zp$-rank of $\tilde{\mathbf{T}}^{(p)}$. Thus, $\tilde{\textbf{e}}$ is an isomorphism. 

By passing to the quotient map, $\tilde{\textbf{e}}$ gives rise to an isomorphism of $\Lambda^{(p)}$-modules $\tilde{I}_{0} / \tilde{I}_{0}^2\simeq \tilde{H}^{(p)}_+/\tilde{I}_{0}\cdot \tilde{H}^{(p)}_+$. The $\Lambda^{(p)}$-module $\tilde{H}^{(p)}_+/\tilde{I}_{0}\cdot \tilde{H}^{(p)}_+$, and so $\tilde{I}_{0} / \tilde{I}_{0}^2$, is cyclic since it is cyclic modulo $J$ by Proposition \ref{odd_modSymb_refined_Hida}. By Nakayama's Lemma the ideal $\tilde{\textbf{I}}_{0}$ is principal. Since a generator of $\tilde{\textbf{I}}_{0}$ is not a zero-divisor, we get:
$$\tilde{\mathbb{T}}^{(p)}/\tilde{I}_{0} \simeq \tilde{I}_{0}/\tilde{I}_{0}^2 \text{ .}$$
This concludes the proof of points (\ref{odd_modSymb_structure_H_1_i}) and (\ref{odd_modSymb_structure_H_1_ii}) by Theorem \ref{odd_modSymb_Eisenstein_ideal_X1} (\ref{odd_modSymb_Eisenstein_ideal_X1_i}), except for the assertion concerning the complete intersection property. Since the ring homomorphism $\Lambda^{(p)} \rightarrow \tilde{\mathbf{T}}^{(p)}$ is injective, the $\Zp$-algebra $\tilde{\mathbf{T}}^{(p)}$ is isomorphic to $\Lambda^{(p)}[X]/(P(X))$ for some $P \in \Lambda^{(p)}[X]$ such that $P(0)=\zeta^{(p)}$. Thus, $\tilde{\mathbf{T}}^{(p)} \simeq \Zp[X,Y]/((1+Y)^{p^t}-1, Q(X,Y))$ for some $Q(X,Y) \in \Zp[X,Y]$ satisfying $Q(0,Y)=\sum_{a\in (\Z/N\Z)^{\times}} \B_2(\frac{a}{N})\cdot (1+Y)^{\log(a)}$ (we take any representative of $\log(a)$ in $\Z$). In particular, for any $p^t$th root of unity $\mu$, we have $Q(0,\mu-1)\neq 0$. Thus, $Q(0,Y)$ and $R(Y):=(1+Y)^{p^t}-1$ are coprime in $\Zp[Y]$. We easily deduce that $Q(X,Y)$ is not a zero-divisor in $\Zp[X,Y]/(R(Y))$. Thus, the sequence $(R,Q)$ is regular in $\Zp[X,Y]$ so $\tilde{\mathbf{T}}^{(p)}$ is complete intersection.

We finally prove point (\ref{odd_modSymb_structure_H_1_iii}). By Theorem \ref{odd_modSymb_structure_H_1} (\ref{odd_modSymb_structure_H_1_iii}), it suffices to prove that $H^{(p)}_+$ is free of rank $1$ over $\mathbb{T}^{(p)}$. The $\mathbb{T}$-module $I\cdot H_+$ is free of rank $1$. Indeed, it suffices to prove this by localizing at each maximal ideal of $\mathbb{T}$; at non-Eisenstein ideals this follows from \cite[Corollary 15.2]{Mazur_Eisenstein} since $p>2$ while at the maximal Eisenstein ideal this follows from \cite[Proposition 16.6]{Mazur_Eisenstein}. Proposition \ref{odd_modSymb_refined_Hida} and Nakayama's lemma then show that $H^{(p)}_+$ is monogenic over $\mathbb{T}^{(p)}$. Since the $\Zp$ rank of $\mathbb{T}^{(p)}$ and $H^{(p)}_+$ are the same (namely the genus of $X_1(N)^{(p)}$), $H^{(p)}_+$ must be free of rank $1$ over $\mathbb{T}^{(p)}$.
\end{proof}

The following result will be useful later.

\begin{prop} \label{odd_modSymb_construction}
The projection map $\tilde{H}^{(p)} \rightarrow H$ gives an isomorphism
$$ H^{(p)}_+/(I_0 + J)\cdot H^{(p)}_+ \xrightarrow{\sim} I\cdot H_+/I^2\cdot H_+\text{ .}$$
\end{prop}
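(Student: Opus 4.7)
The strategy is to use the projection $\pi : H^{(p)}_+ \twoheadrightarrow H_+$ together with Mazur's structural results on $\mathbb{T}$ and $H_+$ at the Eisenstein prime. First, I would observe that Proposition \ref{odd_modSymb_refined_Hida} (ii) restricts to the plus parts because the diamond operators, coming from elements of $\Gamma_0(N)$ of determinant one, commute with complex conjugation; hence $\pi$ is surjective with kernel $J\cdot H^{(p)}_+$. Moreover $\pi$ is compatible with the natural surjective ring homomorphism $\mathbb{T}^{(p)} \twoheadrightarrow \mathbb{T}$ (Albanese functoriality) sending $T_n\mapsto T_n$ and $\langle d\rangle\mapsto 1$: under this map $J$ goes to $0$, while each generator $T_n - \sum_{d\mid n,\,\gcd(d,N)=1}\langle d\rangle\cdot d$ of $I_0$ maps onto the generator $T_n - \sum_{d\mid n,\,\gcd(d,N)=1} d$ of Mazur's ideal $I$; in particular $\pi(I_0\cdot H^{(p)}_+) = I\cdot H_+$.

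The crucial claim is that $\pi^{-1}(I\cdot H_+) = (I_0+J)\cdot H^{(p)}_+$. The inclusion $\supseteq$ is immediate from the previous paragraph. Conversely, given $h \in H^{(p)}_+$ with $\pi(h) \in I\cdot H_+$, the surjection $\pi|_{I_0\cdot H^{(p)}_+} : I_0\cdot H^{(p)}_+ \twoheadrightarrow I\cdot H_+$ furnishes $h_1 \in I_0\cdot H^{(p)}_+$ with $\pi(h_1) = \pi(h)$; then $h - h_1 \in \ker\pi = J\cdot H^{(p)}_+$, so $h \in (I_0+J)\cdot H^{(p)}_+$. The first isomorphism theorem then yields a canonical isomorphism
\[
H^{(p)}_+/(I_0+J)\cdot H^{(p)}_+ \xrightarrow{\sim} H_+/I\cdot H_+.
\]

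To convert the right-hand side into $I\cdot H_+/I^2\cdot H_+$, I would invoke two theorems of Mazur: $H_+$ is locally free of rank one over $\mathbb{T}$ at the Eisenstein prime \cite[Proposition 16.6]{Mazur_Eisenstein}, and the completion of $\mathbb{T}$ at $I$ is a local complete intersection \cite[Theorem 18.10]{Mazur_Eisenstein}, so $I$ is locally principal. For any local generator $\eta$ of $I$, multiplication by $\eta$ gives an isomorphism $H_+/I\cdot H_+ \xrightarrow{\sim} I\cdot H_+/I^2\cdot H_+$; composing with the isomorphism above produces the desired one, which sends $h \in H^{(p)}_+$ to $\pi(\tilde{\eta}\cdot h)\bmod I^2\cdot H_+$ for any lift $\tilde{\eta}\in I_0$ of $\eta$ (well-defined independent of the lift since two lifts differ by an element of $J$, which annihilates $H_+$), and so is manifestly induced by the projection. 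The main obstacle is the surjectivity $\pi(I_0\cdot H^{(p)}_+)=I\cdot H_+$, which rests on the explicit compatibility of generators of $I_0$ and $I$ under the Hecke quotient map. As a consistency check both sides have order $p^t$: by Theorem \ref{odd_modSymb_structure_H_1} (iii) the left side is $\Lambda^{(p)}/(\zeta^{(p)},\nu^{(p)},J) \cong \Zp/((1-N)/(6N),\, p^t) \cong \Z/p^t\Z$ (using $\sum_{x=1}^{N-1}\B_2(x/N) = (1-N)/(6N)$, which equals a $p$-adic unit times $N-1$), while by Mazur $I\cdot H_+/I^2\cdot H_+ \cong ((\Z/N\Z)^\times\otimes_\Z\Zp)^{\otimes 2} \cong \Z/p^t\Z$.
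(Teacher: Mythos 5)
Your argument breaks down at its first step: the pushforward $\pi\colon H^{(p)}_+\to H_+$ on absolute (cuspidal) homology is \emph{not} surjective. Corollary \ref{surjection_homology} gives surjectivity only for the relative groups $\tilde{H}_{\Gamma}$, and Proposition \ref{odd_modSymb_refined_Hida}(ii) computes only the kernel of $H_{\Gamma_1}\to H_{\Gamma_2}$, not its image. In fact the cokernel of $H^{(p)}_+\to H_+$ is a nontrivial quotient of $\Zp[C_{\infty}^{(p)}]^0$ of order $p^t$, and the nontrivial arithmetic input that the paper's proof rests on is precisely \cite[Lemma II.18.7]{Mazur_Eisenstein}: the image of $\pi$ equals $I\cdot H_+$. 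Once this is corrected, your subsequent claims fail: since the generators $T_n-\sum_{d\mid n}\langle d\rangle d$ of $I_0$ push forward to the generators $T_n-\sum_{d\mid n}d$ of $I$ (this part of your argument is right), one gets $\pi(I_0\cdot H^{(p)}_+)=I\cdot\pi(H^{(p)}_+)=I^2\cdot H_+$, not $I\cdot H_+$; the preimage $\pi^{-1}(I\cdot H_+)$ is all of $H^{(p)}_+$, not $(I_0+J)\cdot H^{(p)}_+$; and the map you ultimately write down, $h\mapsto\pi(\tilde{\eta}\cdot h)\bmod I^2\cdot H_+$, takes values in $\eta\cdot I\cdot H_+\subset I^2\cdot H_+$ and is therefore identically zero. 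Your order check at the end does not detect any of this because $H_+/I\cdot H_+$ and $I\cdot H_+/I^2\cdot H_+$ are both abstractly isomorphic to $\Z/p^t\Z$.

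The repair is the paper's (shorter) route, which needs neither the local principality of $I$ nor the freeness of $H_+$: from $\pi(H^{(p)}_+)=I\cdot H_+$ and $\pi(I_0\cdot H^{(p)}_+)=I^2\cdot H_+$, together with $\Ker\pi=J\cdot H^{(p)}_+\subset (I_0+J)\cdot H^{(p)}_+$ from Proposition \ref{odd_modSymb_refined_Hida}, the projection $\pi$ itself induces the isomorphism $H^{(p)}_+/(I_0+J)\cdot H^{(p)}_+\xrightarrow{\sim} I\cdot H_+/I^2\cdot H_+$ directly, with no intermediate identification of $H_+/I\cdot H_+$ with $I\cdot H_+/I^2\cdot H_+$.
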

\begin{proof}
By \cite[Lemma II.18.7]{Mazur_Eisenstein}, the image of the map $H^{(p)}_+ \rightarrow H_+$ is $I\cdot H_+$. Thus, the image of the map $I_0\cdot H^{(p)}_+ \rightarrow H_+$ is $I^2\cdot H_+$.
Proposition \ref{odd_modSymb_construction} then follows from Proposition \ref{odd_modSymb_refined_Hida}.
\end{proof}

\section{Sharifi's conjecture for \texorpdfstring{$X_{1}(N)$}{Lg} and  \texorpdfstring{$X_{1}(N)^{(p)}$}{Lg}.}\label{section_Sharifi_conj}
Keep the notation of the previous sections. In this section, we discuss Sharifi's conjecture (Conjecture \ref{Sharifi_conjecture}) and prove Theorem \ref{thm_conj}.

For simplicity, if $u,v\in \Z/N\Z$ with $\gcd(u,v)=1$ we let
$$[u,v]^{*}:=\xi_{\Gamma_1(N)}([u,v]) \in \tilde{H}_{\Gamma_1(N)} $$
and
$$[u,v]^{*}_+:=\frac{1}{2}\cdot([u,v]^{*}+[-u,v]^{*}) \in (\tilde{H}_{\Gamma_1(N)})_+ \text{ .}$$
Note that $[u,v]^{*}_+$ was denoted by $\xi_1([(u,v)])$ in \S \ref{introduction}. We shall abuse notation and still denote by $[u,v]^{*}$ (resp. $[u,v]^{*}_+$) the image of $[u,v]^{*}$ in $\tilde{H}^{(p)}$ (resp. $\tilde{H}^{(p)}_+$), when there are no possible confusions.

\begin{prop}[Sharifi]\label{Prop_existence_varpi}
The homomorphism of \S \ref{introduction}
$$\tilde{\varpi}: (\tilde{H}_{\Gamma_1(N)})_+ \rightarrow \mathcal{K}$$
 given by $$[u,v]^{*}_+\mapsto \langle 1-\zeta_{N}^u,1-\zeta_{N}^v \rangle$$ is well-defined.
\end{prop}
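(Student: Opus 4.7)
Define $\Psi : \Zp[((\Z/N\Z)^{\times})^{2}] \to \mathcal{K}$ by $\Psi([u,v]) = \langle 1-\zeta_{N}^{u}, 1-\zeta_{N}^{v}\rangle$ extended $\Zp$-linearly. The plan is to show that $\Psi$ factors through the surjection $\xi_{1}$ onto $(\tilde{H}_{\Gamma_{1}(N)})_{+}$. Recalling $[u,v]^{*}_{+} = \tfrac{1}{2}([u,v]^{*} + [-u,v]^{*})$, this reduces to two things: (A) the plus-part invariance $\Psi([u,v]) = \Psi([-u,v])$ in $\mathcal{K}$, and (B) the vanishing of $\Psi$ on the Manin kernel $R^{0}_{\Gamma_{1}(N)} = (M^{0})^{\tau} + (M^{0})^{\sigma} + \sum_{d}\Zp\cdot[-d,d]$ supplied by Proposition \ref{generation_Manin_C_0^{(p^r)}}.

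The crucial tool is a torsion-vanishing observation: any Steinberg symbol $\langle u, v\rangle \in \mathcal{K}$ with $u$ of the form $\pm \zeta_{N}^{k}$ is zero. Indeed, such a $u$ has multiplicative order dividing $2N$, so $\{u,v\}$ is $2N$-torsion in $K_{2}(\Z[\zeta_{N}, \tfrac{1}{Np}])$, and since $p \geq 5$ and $p \neq N$ the integer $2N$ is invertible in $\Zp$. Using this, invariance (A) follows from the identity $1-\zeta_{N}^{-u} = -\zeta_{N}^{-u}(1-\zeta_{N}^{u})$ together with bilinearity: one gets $\langle 1-\zeta_{N}^{-u}, 1-\zeta_{N}^{v}\rangle = \langle -\zeta_{N}^{-u}, 1-\zeta_{N}^{v}\rangle + \langle 1-\zeta_{N}^{u}, 1-\zeta_{N}^{v}\rangle$, and the first term dies by the torsion observation.

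For (B), I would handle the three families of generators in turn. For $[-d,d]$, expanding $\langle 1-\zeta_{N}^{-d}, 1-\zeta_{N}^{d}\rangle$ by the same identity yields only terms with a root of unity in the first slot, which vanish. For $\sigma$-invariants, a quick check shows that $\sigma$ has no monomial fixed points in $M^{0}_{\Gamma_{1}(N)}$ (any such would force $c \equiv 0 \pmod{N}$, excluded from $M^{0}$), so every $\sigma$-invariant is a $\Zp$-combination of two-element orbit-sums $[c,d]+[d,-c]$; using (A) to replace $\zeta_{N}^{-c}$ by $\zeta_{N}^{c}$, such a sum maps to $\langle 1-\zeta_{N}^{c}, 1-\zeta_{N}^{d}\rangle + \langle 1-\zeta_{N}^{d}, 1-\zeta_{N}^{c}\rangle = 0$ by antisymmetry of the Steinberg symbol. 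For $\tau$-invariants, likewise no monomial is fixed (since $p \neq 3$ and $N$ is prime $\geq 5$, a fixed point would force $c \equiv 0 \pmod{N}$), so invariants are generated by $\tau$-orbit sums $[c,d]+[d,-c-d]+[-c-d,c]$, which via (A) reduce to establishing the identity
$$\langle 1-\zeta_{N}^{c}, 1-\zeta_{N}^{d}\rangle + \langle 1-\zeta_{N}^{d}, 1-\zeta_{N}^{c+d}\rangle + \langle 1-\zeta_{N}^{c+d}, 1-\zeta_{N}^{c}\rangle = 0$$
in $\mathcal{K}$.

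This last $\tau$-identity is the main obstacle. The plan is to apply the Steinberg relation to $\alpha = \zeta_{N}^{c}(1-\zeta_{N}^{d})/(1-\zeta_{N}^{c+d})$: a direct computation gives $1-\alpha = (1-\zeta_{N}^{c})/(1-\zeta_{N}^{c+d})$, so $\langle \alpha, 1-\alpha\rangle = 0$ in $K_{2}(\Q(\zeta_{N}))$. Expanding both arguments of this symbol by bilinearity and discarding every symbol containing an entry of the form $\pm\zeta_{N}^{k}$ via the torsion-vanishing observation produces exactly the three-term identity above, completing the verification of (B) and hence of the proposition.
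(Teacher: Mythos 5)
Your argument is correct and is essentially the standard verification that the paper simply outsources to \cite[Proposition 5.7]{Sha1}: discard symbols with $\pm\zeta_N^k$ in a slot because they are $2N$-torsion and $2N\in\Zp^{\times}$, use this to get invariance under sign changes of $u$ or $v$, and derive the three-term $\tau$-relation from the Steinberg relation applied to $\alpha=\zeta_N^c(1-\zeta_N^d)/(1-\zeta_N^{c+d})$. The only (cosmetic) slip is that in the $[-d,d]$ and $\tau$ computations the leftover diagonal term is $\langle x,x\rangle$, which is not literally of the form you claim but equals $-\langle x,-1\rangle$ (since $\{x,-x\}=0$) and hence dies by the same torsion observation.
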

\begin{proof}
We need to check that the map $[u,v]^{*}_+\mapsto \langle 1-\zeta_{N}^u,1-\zeta_{N}^v \rangle$ satisfies the Manin relations. The proof is identical to the one of \cite[Proposition 5.7]{Sha1}.
\end{proof}

A straightforward computation shows that $\tilde{\varpi}$ is $\Lambda$-equivariant (recall the convention for the action of $\Lambda$ on $\mathcal{K}$ in the introduction). We denote by 
$$\varpi: (H_{\Gamma_1(N)})_+\rightarrow \mathcal{K}$$
the restriction of $\tilde{\varpi}$ to $(H_{\Gamma_1(N)})_+ =H_1(X_1(N), \Zp)_+$.

We now describe an analogue of Sharifi's conjecture for the modular curve $X_1(N)^{(p)}$. Let $\zeta_N^{(p)} \in \Q(\zeta_N)$ be such that $[\Q(\zeta_N^{(p)}):\Q]=p^t$ (where $t$ is the $p$-adic valuation of $N-1$). We let $\mathcal{K}^{(p)} := K_2(\Z[\zeta_N^{(p)}, \frac{1}{Np}])$. The cyclotomic character gives an isomorphism $\Gal(\Q(\zeta_N)/\Q)\xrightarrow{\sim} (\Z/N\Z)^{\times}$. Under this identification, we have $\Gal(\Q(\zeta_N^{(p)})/\Q)=P$ and $\Gal(\Q(\zeta_N)/\Q(\zeta_N^{(p)}))=P'$. We thus have a canonical action of $\Lambda^{(p)}$ on $\mathcal{K}^{(p)}$.

\begin{rem}\label{comparison_K_etale}
The \'etale Chern class maps gives canonical isomorphisms $$H^2_{\text{\'et}}(\Z[\zeta_N, \frac{1}{Np}], \Zp(2)) \xrightarrow{\sim} \mathcal{K}$$
and
$$H^2_{\text{\'et}}(\Z[\zeta_N^{(p)}, \frac{1}{Np}], \Zp(2)) \xrightarrow{\sim} \mathcal{K}^{(p)} \text{ .}$$
Under these identifications, the norm map $\mathcal{K} \rightarrow \mathcal{K}^{(p)}$ corresponds to the corestriction map $H^2_{\text{\'et}}(\Z[\zeta_N, \frac{1}{Np}], \Zp(2)) \rightarrow H^2_{\text{\'et}}(\Z[\zeta_N^{(p)}, \frac{1}{Np}], \Zp(2))$. We will use these identifications freely in the rest of the article.
\end{rem}

We collect a few useful facts about our various $K$-groups.

\begin{prop}\label{Prop_K_theory}
\begin{enumerate}
    \item\label{Prop_K_theory_norm} The norm map $\mathcal{K}\rightarrow \mathcal{K}^{(p)}$ induces isomorphisms $\mathcal{K}/J_{(p)}\cdot\mathcal{K} \xrightarrow{\sim} \mathcal{K}^{(p)}$ and $$J\cdot \mathcal{K}/J^2\cdot \mathcal{K} \xrightarrow{\sim} J^{(p)}\cdot \mathcal{K}^{(p)}/(J^{(p)})^2\cdot \mathcal{K}^{(p)} \text{ .}$$
    \item\label{Prop_K_theory_augmentation} We have a group isomorphism $$\mathcal{K}/J\cdot \mathcal{K}\xrightarrow{\sim} (\Z/N\Z)^{\times} \otimes_{\Z} \Zp$$
    given by the residue symbol $\langle x, y \rangle \otimes 1 \mapsto \overline{\frac{x^{v(y)}}{y^{v(x)}}} \otimes 1$
    where $v(\cdot)$ is the $(1-\zeta_N)$-adic valuation and the bar means reduction modulo $(1-\zeta_N)$.
    \item\label{Prop_K_theory_Lambda} The $\Lambda^{(p)}$-module $\mathcal{K}^{(p)}$ is isomorphic to $\Lambda^{(p)}/(\zeta^{(p)})$ (recall the choice of the $\Lambda^{(p)}$-action made in the introduction).
\end{enumerate}
\end{prop}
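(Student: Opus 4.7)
My plan is to address the parts in the order (i), (iii), (ii), since (ii) rests on the other two. For part (i), the key input is that $|P'| = (N-1)/p^t$ is coprime to $p$, hence a $p$-adic unit. Via Remark \ref{comparison_K_etale}, the norm map corresponds to corestriction in \'etale cohomology, and the Hochschild--Serre spectral sequence for the $P'$-cover $\text{Spec}\,\Z[\zeta_N, \tfrac{1}{Np}] \to \text{Spec}\,\Z[\zeta_N^{(p)}, \tfrac{1}{Np}]$ degenerates (higher $P'$-cohomology with $\Zp$-coefficients vanishes), identifying $\mathcal{K}^{(p)}$ with both $\mathcal{K}^{P'}$ and $\mathcal{K}_{P'} = \mathcal{K}/J_{(p)}\mathcal{K}$. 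For the second isomorphism, I would first establish $J_{(p)} \subseteq J^2$ as ideals of $\Lambda$: for any $g \in P'$ of order $m$ coprime to $p$, the identity $([g]-1)\sum_{k=0}^{m-1}[g]^k = 0$ combined with $\sum_{k=0}^{m-1}[g]^k \equiv m \pmod{J}$ gives $m([g]-1) \in J^2$, hence $[g]-1 \in J^2$ (as $m \in \Zp^{\times}$). Consequently $J_{(p)}\mathcal{K} \subseteq J^2\mathcal{K}$, and the natural surjection $J\mathcal{K}/J^2\mathcal{K} \twoheadrightarrow J^{(p)}\mathcal{K}^{(p)}/(J^{(p)})^2\mathcal{K}^{(p)}$ becomes an isomorphism.

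Part (iii) is the Iwasawa-theoretic heart of the matter. The Iwasawa main conjecture for $\Q$ (Mazur--Wiles), combined with the comparison of $K_2$ with \'etale cohomology in Remark \ref{comparison_K_etale}, implies that $\mathcal{K}^{(p)}$ is cyclic over $\Lambda^{(p)}$ with annihilator generated by the Stickelberger-type element $\zeta^{(p)}$ associated to the motive $\Zp(2)$; hence $\mathcal{K}^{(p)} \simeq \Lambda^{(p)}/(\zeta^{(p)})$.

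For part (ii), I define the map via the tame symbol at $\mathfrak{p} = (1-\zeta_N)$, the unique (totally ramified) prime of $\Z[\zeta_N]$ above $N$. Since $\Gal(\Q(\zeta_N)/\Q)$ fixes $\mathfrak{p}$ and acts trivially on the residue field $\Z/N\Z$, the tame symbol is $\Lambda$-linear for the trivial action on $(\Z/N\Z)^{\times}\otimes_{\Z}\Zp$ and factors through $\mathcal{K}/J\mathcal{K}$; the stated formula is the standard tame-symbol expression. Surjectivity follows from the computation $\langle 1-\zeta_N^a, 1-\zeta_N\rangle \mapsto a \otimes 1$, using $(1-\zeta_N^a)/(1-\zeta_N) \equiv a \pmod{\mathfrak{p}}$. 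For injectivity, combining (i) and (iii) gives $\mathcal{K}/J\mathcal{K} \simeq \Lambda^{(p)}/(J^{(p)}, \zeta^{(p)}) \simeq \Zp/\zeta^{(p)}(1)\Zp$, and the direct Bernoulli-sum calculation $\zeta^{(p)}(1) = \sum_{a=1}^{N-1}\B_2(a/N) = -(N-1)/(6N)$ has $p$-adic valuation $t$ (using $p \geq 5$ and $p \neq N$), matching $|(\Z/N\Z)^{\times}\otimes_{\Z}\Zp| = p^t$.

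The main obstacle is part (iii), which depends on deep Iwasawa-theoretic input identifying the Galois-module structure of $K_2$ of cyclotomic integer rings with a Stickelberger quotient. Parts (i) and (ii) rely on standard tools: group cohomology in order invertible in $\Zp$ together with a ring-theoretic identity for (i), and the tame symbol with a dimension count for (ii).
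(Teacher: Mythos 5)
Your parts (i) and (ii) are essentially sound, and in fact your argument that $J_{(p)}\subseteq J^2$ (via $[g]-1\in J^2$ for $g$ of order prime to $p$) supplies exactly the detail needed for the second isomorphism in (i); the paper's own justification there is terser. Your tame-symbol description and the cardinality count $\abs{\Lambda^{(p)}/(J^{(p)},\zeta^{(p)})}=\abs{\Zp/\tfrac{N-1}{6N}\Zp}=p^t$ in (ii) are also correct. The genuine gap is in part (iii), and it infects the logical order of your whole argument. You assert that the Mazur--Wiles main conjecture ``implies that $\mathcal{K}^{(p)}$ is cyclic over $\Lambda^{(p)}$ with annihilator generated by $\zeta^{(p)}$.'' It does not. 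What the deep input actually delivers (this is the Coates--Sinnott-type theorem of Greither--Popescu, not the main conjecture per se, since one must descend from the Iwasawa tower to a precise statement over the finite group ring $\Lambda^{(p)}$) is the equality of \emph{Fitting ideals} $\textnormal{Fitt}_{\Lambda^{(p)}}(\mathcal{K}^{(p)})=(\zeta^{(p)})$, after checking that $H^1(\O_{K,S}[\tfrac1p],\Zp(2))_{\textnormal{tors}}$ vanishes. Over the non-domain $\Lambda^{(p)}=\Zp[P]$ a principal Fitting ideal does not by itself force the module to be cyclic, so the conclusion $\mathcal{K}^{(p)}\simeq\Lambda^{(p)}/(\zeta^{(p)})$ requires cyclicity as a separate input.

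That missing cyclicity is precisely where your chosen order (i), (iii), (ii) breaks down. The paper obtains cyclicity by Nakayama from the fact that $\mathcal{K}^{(p)}/J^{(p)}\cdot\mathcal{K}^{(p)}\simeq\mathcal{K}/J\cdot\mathcal{K}\simeq(\Z/N\Z)^{\times}\otimes_{\Z}\Zp$ is cyclic, i.e.\ from part (ii) --- which it proves \emph{before} (iii) by identifying $\mathcal{K}/J\cdot\mathcal{K}$ with $K_2(\Z[\tfrac1{Np}])\otimes_{\Z}\Zp$ via corestriction descent for the full degree-$(N-1)$ extension (this needs the general $H^2$-descent of Neukirch--Schmidt--Wingberg, not just the prime-to-$p$ Hochschild--Serre degeneration you use in (i)). Your proof of (ii), by contrast, takes the cardinality of $\mathcal{K}/J\cdot\mathcal{K}$ from (iii), so as written (ii) and the cyclicity needed for (iii) each depend on the other. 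To repair the argument, either prove (ii) independently of (iii) as the paper does (corestriction descent to $K_2(\Z[\tfrac1{Np}])\otimes\Zp\simeq\mathbf{F}_N^{\times}\otimes\Zp$, then the residue-symbol computation), or otherwise establish that $\mathcal{K}^{(p)}$ is generated by one element over $\Lambda^{(p)}$ before invoking the Fitting-ideal computation.
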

\begin{proof}
Proof of point (\ref{Prop_K_theory_norm}). 
By \cite[Propositions 8.3.18 and 3.3.11]{Neukirch_Galois}, the corestriction induces an isomorphism
$$H^2_{\text{\'et}}(\Z[\zeta_N, \frac{1}{Np}], \Zp(2))/J_{(p)}\cdot H^2_{\text{\'et}}(\Z[\zeta_N, \frac{1}{Np}], \Zp(2)) \xrightarrow{\sim} H^2_{\text{\'et}}(\Z[\zeta_N^{(p)}, \frac{1}{Np}], \Zp(2))\text{ .}$$
This proves the first assertion by Remark \ref{comparison_K_etale}. The second assertion follows from the first and from the fact that $J_{(p)}\cdot J \subset J^2$.

Proof of point (\ref{Prop_K_theory_augmentation}). Again by \cite[Propositions 8.3.18 and 3.3.11]{Neukirch_Galois}, we have a canonical group isomorphism 
$$\mathcal{K}/J\cdot \mathcal{K}\xrightarrow{\sim} K_2(\Z[\frac{1}{Np}])\otimes_{\Z} \Zp \text{ .}$$
Since $K_2(\Z[\frac{1}{Np}])\otimes_{\Z} \Zp \simeq \mathbf{F}_N^{\times}\otimes_{\Z} \Zp$, the residue symbol is an isomorphism.

\iffalse
{\color{blue}{
	Can you check if the formulation below is OK?}}
\fi	
Proof of point (\ref{Prop_K_theory_Lambda}). This is a consequence of the work of Greither and Popescu on the Coates-Sinnott conjecture. Let $K/k$ be abelian extension of number fields of Galois group $\mathfrak{G}$, and let $S$ be a finite set of primes in $k$ that contains all the ramified primes and the set $S_{\infty}$ of infinite places. 

Let $n\geq 2$ be an integer and $k$ be a number field. If $k$ is totally real, we let $e_{n}(K/k):=\prod_{v\in S_{\infty}}\frac{(1+(-1)^n\sigma_{v})}{2}$, where $\sigma_{v}$ is a generator of the decomposition group at $v$. If $k$ is not totally real, we let $e_{n}(K/k):=0$ . We let $\Theta_{S,K/k}(1-n)=\sum_{\chi\in\widehat{\mathfrak{G}}}L_{S}(\chi^{-1},1-n)\cdot e_{\chi}$ where $e_{\chi}=\tfrac{1}{|\mathfrak{G}|}\sum_{\sigma\in\mathfrak{G}}\chi(\sigma)[\sigma^{-1}] \in \C[\mathfrak{G}]$ and $L_{S}(\chi^{-1},1-n)$ is the L-function with the Euler factors at $S$ removed. Siegel proved that $\Theta_{S,K/k}(1-n) \in \Q[\mathfrak{G}]$.

\begin{thm}\cite[Theorem 6.11]{GP}\label{G&P}
	If $k$ is totally real, $p>2$ and the classical Iwasawa $\mu$-invariant associated to the maximal CM-subfield of $K(\mu_{p})$ is zero, then we have in $\Zp[\mathfrak{G}]$:
	$$
	\textnormal{Ann}_{\Zp[\mathfrak{G}]}(H^1(\O_{K,S}[\tfrac{1}{p}],\Zp(n))_{\textnormal{tors}})\cdot\Theta_{S,K/k}(1-n)=e_{n}(K/k)\cdot \textnormal{Fitt}(H^2(\O_{K,S}[\tfrac{1}{p}],\Zp(n)),
	$$
	where $\textnormal{Fitt}$ (resp. $\textnormal{Ann}$) designates the Fitting ideal (resp. the annihilator) with respect to the ring $\Zp[\mathfrak{G}]$.
\end{thm}

We apply this result to $k=\Q$, $K=\Q(\zeta_{N}^{(p)})$, $n=2$ and $S=\{N,\infty\}$. Since $K$ is abelian, the $\mu$-invariant vanishes. By \cite[Lemma 6.9]{GP}, we have
\[H^1(\O_{K,S}[\tfrac{1}{p}],\Zp(2))_{\textnormal{tors}}=(\Qp/\Zp(2))^{G_{K}}=0.\]

Since $K=\Q(\zeta_{N}^{(p)})$ is totally real, $e_{2}(K/k)$ acts on the Fitting ideal trivially, so we get
\[\textnormal{Fitt}(H^2(\Z[\zeta_{N}^{(p)},\tfrac{1}{Np}],\Zp(2)))=\Theta_{S,K/k}(-1).\]

In the case we are interested in, we have 
\[\Theta_{S,K/k}(1-2)=\frac{N}{2}\cdot \sum_{a \in (\Z/N\Z)^{\times}} \B_{2}(\frac{a}{N})[a^{-1}].\]
If we consider instead the ``inverse'' action of $\mathfrak{G}$ on our various Galois cohomology groups, we get by Theorem \ref{G&P} \[\textnormal{Fitt}(\mathcal{K}^{(p)}) = \textnormal{Fitt}(H^2(\Z[\zeta_{N}^{(p)},\tfrac{1}{Np}],\Zp(2)))=\zeta^{(p)}.\] 

By Nakayama's lemma and Proposition \ref{Prop_K_theory} (\ref{Prop_K_theory_augmentation}),  $\mathcal{K}^{(p)}$ is a cyclic $\Lambda^{(p)}$-module, so we have $\mathcal{K}^{(p)} \simeq \Lambda^{(p)}/(\zeta^{(p)})$.
\end{proof}

\begin{rem}
Proposition \ref{Prop_K_theory} shows that we have canonical group isomorphisms $$J\cdot \mathcal{K}/J^2\cdot \mathcal{K}\xrightarrow{\sim} J/J^2\cdot \otimes_{\Zp} \mathcal{K}/J\cdot \mathcal{K}\xrightarrow{\sim} ((\Z/N\Z)^{\times})^{\otimes 2}\otimes_{\Z} \Zp $$
and that an element $\sum_{(a,b)\in ((\Z/N\Z)^{\times})^{2}} \lambda_{a,b}\cdot \langle 1-\zeta_N^a, 1-\zeta_N^b \rangle$ of $\mathcal{K}$ belongs to $J\cdot \mathcal{K}$ if and only if $\sum_{(a,b)\in ((\Z/N\Z)^{\times})^{2}} ab^{-1} \otimes \lambda_{a,b}=0$ in $(\Z/N\Z)^{\times} \otimes_{\Z} \Zp$. However, we were not able to determine explicitly the image of  $\sum_{(a,b)\in ((\Z/N\Z)^{\times})^{2}} \lambda_{a,b}\cdot \langle 1-\zeta_N^a, 1-\zeta_N^b \rangle$ in $((\Z/N\Z)^{\times})^{\otimes 2}\otimes_{\Z} \Zp$.  Thus, Conjecture \ref{conjecture_I/I^2} is not enough to answer the question of Merel mentioned in the introduction.

\end{rem}

By Propositions \ref{odd_modSymb_refined_Hida} and \ref{Prop_K_theory}, taking the $J^{(p)}$ coinvariants of the map $\tilde{\varpi}$ induces a $\Lambda^{(p)}$-equivariant map
$$\tilde{\varpi}^{(p)} : \tilde{H}_+^{(p)} \rightarrow \mathcal{K}^{(p)} \text{ .}$$
We denote by $\varpi^{(p)}$ the restriction of $\tilde{\varpi}^{(p)}$ to $H^{(p)}_+=H_1(X_1(N)^{(p)}, \Zp)_+$.

\begin{prop}\label{Prop_iso_H_K}
Assume that Conjecture \ref{Sharifi_conjecture} holds. Then 
$\tilde{\varpi}^{(p)}$ induces isomorphisms $$\tilde{H}_+^{(p)}/\tilde{I}_0\cdot \tilde{H}_+^{(p)} \xrightarrow{\sim} \mathcal{K}^{(p)}$$
and
$$H_+^{(p)}/I_0\cdot H_+^{(p)} \xrightarrow{\sim} J^{(p)}\cdot \mathcal{K}^{(p)} \text{ .}$$
\end{prop}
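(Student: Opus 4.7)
My plan is to derive both isomorphisms from Sharifi's conjecture by combining the structure results of the preceding sections with two Nakayama-type reductions.

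\textbf{First isomorphism.} Conjecture \ref{Sharifi_conjecture} is exactly the assertion that $\tilde\varpi$ is annihilated by the generators of $\tilde I_0'$; passing to $J^{(p)}$-coinvariants (using Proposition \ref{odd_modSymb_refined_Hida} and Proposition \ref{Prop_K_theory} (\ref{Prop_K_theory_norm})) therefore yields the desired $\Lambda^{(p)}$-linear map $\tilde\varpi^{(p)}\colon \tilde H^{(p)}_+/\tilde I_0 \tilde H^{(p)}_+ \to \mathcal K^{(p)}$. By Theorem \ref{odd_modSymb_structure_H_1} (\ref{odd_modSymb_structure_H_1_ii}) and Proposition \ref{Prop_K_theory} (\ref{Prop_K_theory_Lambda}), both sides are non-canonically isomorphic as $\Lambda^{(p)}$-modules to $\Lambda^{(p)}/(\zeta^{(p)})$, hence are finite $\Zp$-modules of the same cardinality; surjectivity will therefore imply bijectivity. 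By Lemma \ref{odd_modSymb_Nakayama} it suffices to check surjectivity modulo $J^{(p)}$. The source then becomes $H_+/I\cdot H_+$ (Proposition \ref{odd_modSymb_refined_Hida} with $\Gamma_2=\Gamma_0(N)$, noting that $\tilde I_0$ maps to Mazur's Eisenstein ideal $I$ in $\mathbb T$), and the target becomes $\mathcal K/J\cdot \mathcal K \simeq (\Z/N\Z)^\times \otimes_\Z \Zp$ (Proposition \ref{Prop_K_theory} (\ref{Prop_K_theory_norm}, \ref{Prop_K_theory_augmentation})). Evaluated on the Manin symbol $[a,1]^*_+$, realized by the matrix $\begin{pmatrix}1 & 0 \\ a & 1\end{pmatrix}$, which projects to Mazur's $\xi([a])$ in $H_+$, the reduced map sends it to $\langle 1-\zeta_N^a,1-\zeta_N\rangle$, whose residue is $\overline{(1-\zeta_N^a)/(1-\zeta_N)}\equiv a \pmod{1-\zeta_N}$. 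Thus the reduction coincides with Mazur's isomorphism $\xi([a])\mapsto a\otimes 1$, which is surjective.

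\textbf{Image of $H^{(p)}_+$.} The composition of $\tilde\varpi^{(p)}$ with the residue $\mathcal K^{(p)}\twoheadrightarrow \mathcal K^{(p)}/J^{(p)}\mathcal K^{(p)}\simeq (\Z/N\Z)^\times\otimes_\Z\Zp$ is $\Lambda^{(p)}$-equivariant and lands in a module with trivial $\Lambda^{(p)}$-action, hence factors through $\tilde H^{(p)}_+/J^{(p)}\tilde H^{(p)}_+ = H_+$ by Proposition \ref{odd_modSymb_refined_Hida}. By the first paragraph, this factored map is Mazur's isomorphism, whose kernel is $I\cdot H_+$. Since the image of $H^{(p)}_+\to H_+$ is contained in $I\cdot H_+$ (this is precisely what is used in the proof of Proposition \ref{odd_modSymb_construction}), the image $\varpi^{(p)}(H^{(p)}_+)$ is contained in $\ker(\mathcal K^{(p)}\twoheadrightarrow (\Z/N\Z)^\times\otimes_\Z\Zp)=J^{(p)}\mathcal K^{(p)}$, producing the map
$$\varphi \colon H^{(p)}_+/I_0 H^{(p)}_+ \longrightarrow J^{(p)}\mathcal K^{(p)}\,.$$

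\textbf{Second isomorphism.} The source and target of $\varphi$ have the same finite cardinality: the identity $\zeta^{(p)}\cdot\nu^{(p)} = \tfrac{1-N}{6N}\cdot\nu^{(p)}$ (using $J^{(p)}\cdot\nu^{(p)}=0$) combined with $\ord_p\bigl(\tfrac{1-N}{6N}\bigr)=t$ gives $|(\zeta^{(p)},\nu^{(p)})/(\zeta^{(p)})|=p^t$, so by Theorem \ref{odd_modSymb_structure_H_1} (\ref{odd_modSymb_structure_H_1_iii}) the source has cardinality $|\mathcal K^{(p)}|/p^t$; the short exact sequence $0\to J^{(p)}\mathcal K^{(p)}\to \mathcal K^{(p)}\to (\Z/N\Z)^\times\otimes_\Z\Zp\to 0$ shows the target has the same cardinality. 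So it again suffices to prove that $\varphi$ is surjective. By Lemma \ref{odd_modSymb_Nakayama} this reduces to surjectivity modulo $J^{(p)}$: Proposition \ref{odd_modSymb_construction} identifies $H^{(p)}_+/(I_0+J^{(p)})H^{(p)}_+$ with $I\cdot H_+/I^2\cdot H_+$, which is cyclic of order $p^t$ by Theorem \ref{odd_modSymb_structure_H_1} together with Mazur's identification $\mathbb T/I\simeq (\Z/N\Z)^\times\otimes_\Z\Zp$, while the reduced target $J^{(p)}\mathcal K^{(p)}/(J^{(p)})^2\mathcal K^{(p)}$ is likewise cyclic of order $p^t$, and surjectivity of the reduction of $\varphi$ is inherited from the first isomorphism. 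The most delicate step is the cardinality matching, which hinges on the numerical input $\ord_p((1-N)/(6N))=t$ and requires the full strength of Theorem \ref{odd_modSymb_structure_H_1} and Proposition \ref{Prop_K_theory}.
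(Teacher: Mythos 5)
Your treatment of the first isomorphism is correct and follows the paper's own route (Nakayama reduction modulo $J^{(p)}$, identification of the reduced map with Mazur's isomorphism via the residue symbol, then cardinality matching through $\Lambda^{(p)}/(\zeta^{(p)})$). Your argument that $\varpi^{(p)}(H^{(p)}_+)\subseteq J^{(p)}\mathcal K^{(p)}$ is a legitimate variant of the paper's Lemma \ref{Lemma_boundary_compatibility} (you factor the residue through $H_+$ and invoke $\mathrm{im}(H^{(p)}_+\to H_+)=I\cdot H_+$ instead of comparing the topological boundary with the tame symbol), and your cardinality count $|J^{(p)}\mathcal K^{(p)}|=|\mathcal K^{(p)}|/p^t=|\Lambda^{(p)}/(\zeta^{(p)},\nu^{(p)})|$ via $\zeta^{(p)}\nu^{(p)}=\tfrac{1-N}{6N}\nu^{(p)}$ is fine.

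The gap is in the last step: ``surjectivity of the reduction of $\varphi$ is inherited from the first isomorphism'' is not an argument. Knowing that $I\cdot H_+/I^2\cdot H_+$ and $J^{(p)}\mathcal K^{(p)}/(J^{(p)})^2\mathcal K^{(p)}$ are both cyclic of order $p^t$ says nothing about the map between them being surjective --- it could a priori even be zero. The first isomorphism identifies $J^{(p)}\mathcal K^{(p)}$ with $\tilde\varpi^{(p)}(J^{(p)}\tilde H^{(p)}_+)$, but $J^{(p)}\tilde H^{(p)}_+$ is \emph{not} contained in $H^{(p)}_+$ (the diamond operators act freely on $C^{(p)}_\infty$, so $([d]-1)x$ generally has nonzero boundary), so nothing about the image of $H^{(p)}_+$ follows. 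This is exactly the delicate point: the paper's proof invokes Conjecture \ref{Sharifi_conjecture} a \emph{second} time here, observing that $(T_\ell-\ell-\langle\ell\rangle)\tilde H^{(p)}\subseteq H^{(p)}$ (these operators kill $C^{(p)}_\infty$) while $\tilde\varpi^{(p)}\bigl((T_\ell-\ell-\langle\ell\rangle)x\bigr)=([\ell]-1)(\ell-1)\tilde\varpi^{(p)}(x)$, and the elements $([\ell]-1)(\ell-1)$ generate $J^{(p)}$ by Dirichlet; combined with surjectivity of $\tilde\varpi^{(p)}$ this gives $J^{(p)}\mathcal K^{(p)}\subseteq\varpi^{(p)}(H^{(p)}_+)$. (An alternative repair is a counting argument on $0\to H^{(p)}_+\to\tilde H^{(p)}_+\xrightarrow{\partial}\Zp[C^{(p)}_\infty]^0\to 0$, computing $\tilde I_0\cdot\Zp[C^{(p)}_\infty]^0=J^{(p)}\cdot\Zp[C^{(p)}_\infty]^0$ from the Hecke action on cusps, so that the image of $H^{(p)}_+$ in $\mathcal K^{(p)}$ has index exactly $p^t$ and must equal $J^{(p)}\mathcal K^{(p)}$.) That some such input is unavoidable is flagged by the paper's second Remark \ref{Remark_Sharifi_Venkatesh}: under a weaker hypothesis one still gets the map but loses precisely this surjectivity. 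As written, your proof of the second isomorphism is incomplete.
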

\begin{proof}
Under our assumption, the map $\tilde{\varpi}^{(p)}$ induces a map $\varphi : \tilde{H}_+^{(p)}/\tilde{I}_0\cdot \tilde{H}_+^{(p)} \rightarrow \mathcal{K}^{(p)}$. To prove that $\varphi$ is an isomorphism, it suffices to prove that it is surjective and that $\tilde{H}_+^{(p)}/\tilde{I}_0\cdot \tilde{H}_+^{(p)}$ and $\mathcal{K}^{(p)}$ have the same (finite) cardinality. 

To prove that $\varphi$ is surjective, it suffices to prove that its reduction $\overline{\varphi}$ modulo $J^{(p)}$ is surjective. By Proposition \ref{Prop_K_theory} (\ref{Prop_K_theory_norm}) and (\ref{Prop_K_theory_augmentation}), the map $\overline{\varphi} : \tilde{H}_+^{(p)}/\tilde{I}_0\cdot \tilde{H}_+^{(p)} \rightarrow \mathcal{K}^{(p)}/J^{(p)}\cdot \mathcal{K}^{(p)} \simeq (\Z/N\Z)^{\times} \otimes_{\Z} \Zp$ is given by $[u,v]^*_+ \mapsto uv^{-1} \otimes 1$. This proves that $\varphi$ is surjective.

By Theorem \ref{odd_modSymb_structure_H_1} and Proposition \ref{Prop_K_theory} (\ref{Prop_K_theory_Lambda}), both $\tilde{H}_+^{(p)}/\tilde{I}_0\cdot \tilde{H}_+^{(p)}$ and $\mathcal{K}^{(p)}$ are $\Lambda^{(p)}$-modules isomorphic to $\Lambda^{(p)}/(\zeta^{(p)})$. Thus, $\varphi$ is an isomorphism.

We now prove the second assertion. We have the following compatibility between the ``arithmetic boundary'' and ``topological boundary''.

\begin{lem}\label{Lemma_boundary_compatibility}
We have a commutative diagram
\begin{equation}\label{tamediagram}
\begin{tikzcd}
\tilde{H}^{(p)}_+\ar[r,"\partial"]\ar[d,"\tilde{\varpi}^{(p)}"]& \Zp[C_{\infty}^{(p)}]^0 \ar[d,"t"]\\
 \mathcal{K}^{(p)}\ar[r,"\partial'"]& (\Z/N\Z)^{\times}\otimes\Zp
\end{tikzcd}
\end{equation}
where $\partial$ is the boundary (\cf \S \ref{odd_modSymb_section_hida}), $\partial'$ is the residue symbol and $t : [u]_{\Gamma_1(N)^{(p)}}^{\infty} \mapsto u \otimes 1$ for $u \in (\Z/N\Z)^{\times}$.
\end{lem}
\begin{proof}
For $u$, $v$ $\in (\Z/N\Z)^{\times}$ we have:
$$(t\circ \partial)([u,v]^*_+)=uv^{-1} \otimes 1 = (\partial' \circ \tilde{\varpi}^{(p)})([u,v]^*_+)$$
(using the description of $\partial$ given in \S \ref{odd_modSymb_section_hida}).
\end{proof}
By Proposition \ref{Prop_K_theory} and Lemma \ref{Lemma_boundary_compatibility}, the map $\varpi^{(p)}$ takes values in $J^{(p)}\cdot \tilde{K}^{(p)}$. For any prime $\ell \neq N$, we have $(T_{\ell}-\ell-\langle \ell \rangle)\cdot \tilde{H}^{(p)} \subset H^{(p)}$ since $T_{\ell}-\ell-\langle \ell \rangle$ annihilates $C_{\infty}^{(p)}$. Since we assume that Conjecture \ref{Sharifi_conjecture} holds, for all $x \in \tilde{H}^{(p)}_+$ we have 
$$\tilde{\varpi}^{(p)}((T_{\ell}-\ell-\langle \ell \rangle)(x))= \tilde{\varpi}^{(p)}((\ell\cdot\langle \ell \rangle+1  -\ell-\langle \ell \rangle)(x)) = ([ \ell ]-1)\cdot (\ell-1) \cdot \tilde{\varpi}^{(p)}(x) \text{ .}$$
When $\ell$ varies through the primes $\neq N$, the elements $([ \ell ]-1)\cdot (\ell-1)$ generate $J^{(p)}$ (by Dirichlet's theorem). Since $\tilde{\varpi}^{(p)}$ is surjective, this proves that we have a surjective map $\varpi^{(p)}  : H_+^{(p)}/I_0\cdot H_+^{(p)} \rightarrow J^{(p)}\cdot \mathcal{K}^{(p)}$.

By Theorem \ref{odd_modSymb_structure_H_1} (\ref{odd_modSymb_structure_H_1_iii}) and Proposition \ref{Prop_K_theory} (\ref{Prop_K_theory_Lambda}), both $H_+^{(p)}/I_0\cdot H_+^{(p)}$ and $J^{(p)}\cdot \mathcal{K}^{(p)}$ are isomorphic to $\Lambda^{(p)}/(\zeta^{(p)}, \nu^{(p)})$ as $\Lambda^{(p)}$-modules. This proves that $\varpi^{(p)}: H_+^{(p)}/I_0\cdot H_+^{(p)} \rightarrow J^{(p)}\cdot \mathcal{K}^{(p)}$ is an isomorphism.
\end{proof}

We now prove Theorem \ref{thm_conj}. 
\begin{proof}
By Proposition \ref{Prop_iso_H_K}, we have a group isomorphism
$$H^{(p)}_+/(I_0+J^{(p)})\cdot H^{(p)}_+ \xrightarrow{\sim} J^{(p)}\cdot \mathcal{K}^{(p)}/(J^{(p)})^2\cdot \mathcal{K}^{(p)}$$
sending the class of $\sum \lambda_{u,v}\cdot [u,v]^*_+$ to the class of $\sum \lambda_{u,v}\cdot \langle 1-\zeta_N^u, 1-\zeta_N^v\rangle$. By Proposition \ref{odd_modSymb_construction}, we get an isomorphism $\phi : I\cdot H_+/I^2\cdot H_+ \xrightarrow{\sim} J^{(p)}\cdot \mathcal{K}^{(p)}/(J^{(p)})^2\cdot \mathcal{K}^{(p)} = J\cdot \mathcal{K}/J^2\cdot \mathcal{K}$ (the last identification comes from the norm, \cf Proposition \ref{Prop_K_theory} (\ref{Prop_K_theory_norm})). It remains to make this isomorphism explicit. We need to find an explicit lift in $H^{(p)}_+$ of an element in $I\cdot H_+$. By (\ref{Mazur_I/I^2}), the $\Zp$-module $I\cdot H_+$ is generated by elements of the form $\xi([xy]-[x]-[y])$ for $x$, $y$ $\in (\Z/N\Z)^{\times}$. Obviously, $\xi([xy]-[x]-[y])$ is the image of $[x,y^{-1}]^*_+ -[x,1]^*_+ +[y^{-1},1]^*_+ \in H^{(p)}_+$.  Thus, we have $$\phi([xy]-[x]-[y]) = \langle 1-\zeta_N^x, 1-\zeta_N^{y^{-1}} \rangle -\langle 1-\zeta_N^{x}, 1-\zeta_N \rangle + \langle 1-\zeta_N^{y^{-1}}, 1-\zeta_N \rangle \text{ .}$$
We can rewrite the right-hand-side as follows:
\begin{align*}
\langle 1-\zeta_N^x, 1-\zeta_N^{y^{-1}} \rangle &-\langle 1-\zeta_N^{x}, 1-\zeta_N \rangle + \langle 1-\zeta_N^{y^{-1}}, 1-\zeta_N \rangle \\& = [y^{-1}]\cdot \langle 1-\zeta_N^{xy}, 1-\zeta_N \rangle-\langle 1-\zeta_N^{x}, 1-\zeta_N \rangle - [y^{-1}]\cdot \langle 1-\zeta_N^{y}, 1-\zeta_N \rangle
\\& =  \langle 1-\zeta_N^{xy}, 1-\zeta_N \rangle-\langle 1-\zeta_N^{x}, 1-\zeta_N \rangle - \langle 1-\zeta_N^{y}, 1-\zeta_N \rangle +\\& ([y^{-1}]-1)\cdot \langle 1-\zeta_N^{xy}, 1-\zeta_N \rangle - ([y^{-1}]-1)\cdot \langle 1-\zeta_N^{y}, 1-\zeta_N \rangle \text{ .}
\end{align*}
Using the residue symbol $\mathcal{K}^{(p)}/J^{(p)}\cdot \mathcal{K}^{(p)} \xrightarrow{\sim} (\Z/N\Z)^{\times} \otimes_{\Z} \Zp$, we immediately check that the we have
\begin{align*}
([y^{-1}]&-1)\cdot \langle 1-\zeta_N^{xy}, 1-\zeta_N \rangle - ([y^{-1}]-1)\cdot \langle 1-\zeta_N^{y}, 1-\zeta_N \rangle \\&= -\frac{1}{2} \cdot( ([xy]-1)\cdot \langle 1-\zeta_N^{xy}, 1-\zeta_N \rangle-([x]-1)\cdot \langle 1-\zeta_N^{x}, 1-\zeta_N \rangle-([y]-1)\cdot \langle 1-\zeta_N^{y}, 1-\zeta_N \rangle)
\end{align*}
in $J^{(p)} \cdot \mathcal{K}^{(p)}/(J^{(p)})^2\cdot \mathcal{K}^{(p)}$. This proves that the map defined in Theorem \ref{thm_conj} coincides with $\phi$. This concludes the proof of Theorem \ref{thm_conj}.
\end{proof}

\section{The Eisenstein quotient conjecture}\label{section_Eisenstein_quotient}

The analogue of Conjecture \ref{Sharifi_conjecture} has been proved by Fukaya--Kato when the level of the modular curve is $Np^r$ ($r>0$) \cite[Theorem 5.2.3]{Fukaya_Kato}. In what follows, we adapt their method to prove some results for $X_1(N)$. In particular, we prove Theorem \ref{thm_Sharifi_mod_norms}.

\textit{Caveat about Hecke operators}. Our various (co)homology groups are equipped with an action of the Hecke operators $T_n$ for $n\geq 1$, induced by Albanese functoriality. We denote by $T_n^*$ the dual of $T_n$, induced by Picard functoriality. Fukaya and Kato instead consider the dual Hecke operators. This comes from the fact that they work with cohomology groups associated to modular curves, while we work with modular symbols which are homology groups. 
We have the following isomorphisms via Poincar\'e duality
\[
H^1(X_{1}(N)(\C),\Zp)(1)\cong H_1(X_{1}(N)(\C),\Zp),
\]
\[H^1(Y_{1}(N)(\C),\Zp)(1)\cong H_1(X_{1}(N)(\C),\textnormal{cusps},\Zp).
\]
These isomorphisms are not Hecke compatible, they transfer from the $T_{n}^{*}$- action to the $T_{n}$-action. For the details, one can see \cite[Proposition 3.5]{Sha1}. More generally, we will say that a map between two Hecke modules is \textit{anti-Hecke equivariant} if the action of $T_n$ on the left corresponds to the action of $T_n^*$ on the right.

\subsection{Eisenstein quotient conjecture and the \texorpdfstring{$\infty$}{Lg}-map}
In this section, we explain the relationship between Sharifi's conjecture and the ``$\infty$-map'' (which is intuitively a specialization at the cusp $\infty$).

\begin{defn}
	Let $\mathcal{Y}_{1}(N)$ be the $\Z[1/N]$-scheme that represents the functor taking a $\Z[1/N]$-scheme $S$ to the set of pairs $(E,\alpha)$, where $E$ is a elliptic curve over $S$ and $\alpha$ is a closed immersion $\Z/N\Z\rightarrow E$ of $S$-group schemes. Let $Y_1(N):=\mathcal{Y}_{1}(N)\otimes{\Q}$. 
\end{defn}

\begin{defn}
	Let $\rho': \tilde{H}_{\Gamma_1(N)}\rightarrow H_{\textnormal{\'et}}^2(\mathcal{Y}_1(N)\otimes\Z[1/p],\Zp(2))$ be the (a priori not well-defined) map such that
	\[\rho'([u,v]^*)=g_{0,\frac{u}{N}}\cup g_{0,\frac{v}{N}}.
	\]
	Here $g_{0,\frac{u}{N}}, g_{0,\frac{v}{N}}$ are Siegel units (Definition \ref{Def 5.1}). We call the elements $g_{0,\frac{u}{N}}\cup g_{0,\frac{v}{N}}$ Beilinson--Kato elements.
\end{defn}

\begin{conj}\label{Eisensein_rho'}
	The map $\rho'$ is well-defined and anti- Hecke-equivariant.
\end{conj}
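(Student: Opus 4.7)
The plan is to factor $\rho'$ through $K$-theory, where Steinberg-type relations are automatic, and then descend along a Chern class map to \'etale cohomology. More precisely, I would first construct an auxiliary map $\tilde{\rho}' : \Zp[\Gamma_1(N) \backslash \PSL_2(\Z)] \to K_2(\mathcal{Y}_1(N) \otimes \Z[1/p]) \otimes \Zp$ sending $[u,v]^*$ to the Steinberg symbol $\{g_{0,u/N}, g_{0,v/N}\}$, compose with the \'etale Chern class map $K_2 \to H^2_{\textnormal{\'et}}(-,\Zp(2))$ and verify that this composite descends through the Manin presentation of $\tilde{H}_{\Gamma_1(N)}$ given in Proposition~\ref{generation_Manin_C_0^{(p^r)}}. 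The construction must then be checked for anti-equivariance under $T_n$ by comparing the action on Manin symbols with the action on Siegel-unit symbols induced by the degeneracy maps between $\mathcal{Y}_1(\ell N)$ and $\mathcal{Y}_1(N)$.

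For well-definedness, one has to check three relations. The $\sigma$-relation $[u,v]^* + [v,-u]^* = 0$ follows from the anti-commutativity of the Steinberg symbol $\{f,g\} = -\{g,f\}$ together with the identity $g_{0,-a/N} = g_{0,a/N}^{-1}$ (valid modulo roots of unity, which are invertible after $\otimes \Zp$ since $p \geq 5$). The relation $[-u,v]^* = [u,-v]^*$ on the $\Gamma$-coset level is immediate, and the boundary relations $[d,0] + [0,d] \in \ker(\xi_\Gamma^0)$ are automatic from the distribution law $g_{0, 0/N}$ being a $N$th root of unity. The serious step is the $\tau$-relation $[u,v]^* + [-v, u-v]^* + [v-u,-u]^* = 0$, which demands the identity
$$\{g_{0,u/N}, g_{0,v/N}\} + \{g_{0,-v/N}, g_{0,(u-v)/N}\} + \{g_{0,(v-u)/N}, g_{0,-u/N}\} = 0$$
in $K_2(\mathcal{Y}_1(N)\otimes \Z[1/p]) \otimes \Zp$. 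This is the Steinberg-type relation for Siegel units and can be proved by reducing to the Steinberg identity $\{x, 1-x\} = 0$ once one uses the explicit factorization of $1 - g_{0,u/N}/g_{0,v/N}$ as a product involving $g_{0,(u-v)/N}$ and a modular unit whose support lies in the cusps where the relevant divisors cancel. Equivalently, one can verify the relation after restriction to each cusp via the $q$-expansion of Siegel units and then use the injectivity of the total $q$-expansion map on $K_2$ modulo its divisible part.

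For anti-Hecke equivariance, one uses the formula $T_\ell^* = (\pi_1)_* \circ \pi_2^*$ on $K_2$, combined with the norm-compatibility of Siegel units under the degeneracy maps $\pi_1,\pi_2 : \mathcal{Y}_1(N\ell) \to \mathcal{Y}_1(N)$. A direct computation transforms $T_\ell^* \{g_{0,u/N}, g_{0,v/N}\}$ into a sum of Steinberg symbols indexed by lifts of $(u,v)$ modulo $\ell N$; on the Manin-symbol side, the action of $T_\ell$ on $[u,v]^*$ is given by the classical Hecke sum $\sum_{j} [u,v+ju]^* + [\ell u, v]^*$ (up to a diamond operator twist at $\ell \nmid N$), and one checks these two sums match term by term by carefully tracking the Siegel unit distribution relations.

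The principal obstacle is the $\tau$-relation: it is the single non-formal input, and unlike the case of level divisible by $p$ treated by Fukaya--Kato, we cannot exploit an ordinary projection to discard the problematic constant-term contributions. In particular, the argument must work $p$-integrally, so we cannot invert the non-unit factors in $\Lambda$ that arise when trying to reduce the Siegel-unit identity to the plain Steinberg relation for cyclotomic units. It is exactly this difficulty that forces the paper to only obtain Sharifi's conjecture modulo $\nu\cdot \mathcal{K}$ (Theorem~\ref{thm_Sharifi_mod_norms}), rather than unconditionally.
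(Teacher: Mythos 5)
The statement you set out to prove is a \emph{conjecture} of the paper: the authors do not prove it. What they actually prove (Theorem~\ref{regulator_hecke_equivariant}) is that the composite $\rho=g\circ\rho'$, where $g$ is the Hochschild--Serre projection killing the subgroup $H^2(\Z[\tfrac{1}{Np}],\Zp(2))$ of $H^2_{\textnormal{\'et}}(\mathcal{Y}_1(N)\otimes\Z[1/p],\Zp(2))$, is well-defined and anti-Hecke-equivariant; this is precisely why their final unconditional result on Sharifi's conjecture is only modulo $\nu\cdot\mathcal{K}$ (Theorem~\ref{thm_Sharifi_mod_norms}). Your argument has a genuine gap at the step you yourself identify as the serious one: the three-term ($\tau$-)Manin relation for cup products of Siegel units in $K_2(\mathcal{Y}_1(N)\otimes\Z[1/p])\otimes\Zp$. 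You assert it ``can be proved by reducing to the Steinberg identity $\{x,1-x\}=0$ once one uses the explicit factorization of $1-g_{0,u/N}/g_{0,v/N}$,'' but no such factorization is exhibited, and producing one $p$-integrally for $\Gamma_1(N)$ is exactly the open content of the conjecture: Brunault's result gives the relation only after tensoring with $\Qp$, Goncharov's integral result is for $\Gamma(N)$, and the paper explicitly states that it does not rely on either. The fallback you propose --- checking the relation on $q$-expansions at the cusps and invoking ``the injectivity of the total $q$-expansion map on $K_2$ modulo its divisible part'' --- rests on a statement that is neither true nor available: classes in $H^2_{\textnormal{\'et}}(\mathcal{Y}_1(N)\otimes\Z[1/p],\Zp(2))$ are not detected by their residues at the cusps, precisely because the ``constant'' part $H^2(\Z[\tfrac{1}{Np}],\Zp(2))\simeq\nu\cdot\mathcal{K}$ is nonzero in this setting (this is the point the introduction makes when contrasting with the Fukaya--Kato situation, where the corresponding ordinary part vanishes).

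It is also worth recording how the paper sidesteps the relation you could not prove, since it is a genuinely different strategy from symbol-by-symbol verification in $K_2$: the authors interpolate the Beilinson--Kato classes along the cyclotomic tower (Theorem~\ref{Fukayakato2}), descend to level one via the norm relation of Proposition~\ref{Prop9}, and then show that any two Manin-symbol expressions of the same homology class have the same image by combining the boundary computation of Fukaya--Kato with the injectivity of $\eta_p=1-T_p^*+p\langle p\rangle^{-1}$ on $H^1(\Z[\tfrac{1}{Np}],H^1_{\textnormal{\'et}}(X_1(N))(2)\otimes\Qp)$ (Lemma~\ref{Prop10}, Proposition~\ref{Prop11}), landing integrally by Lemma~\ref{Prop12}. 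That argument inherently controls the image only after applying $g$, because $\eta_p$ has no injectivity on the $H^2(\Z[\tfrac{1}{Np}],\Zp(2))$-component --- that, rather than the absence of an ordinary projector, is the reason the full conjecture remains open.
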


\begin{rem}
	 Fran\c cois Brunault and Alexander Goncharov studied an analogue $\rho''$ of $\rho'$ for the principal congruence subgroup $\Gamma(N)$. Brunault proved that $\rho'' \otimes \Qp$ is well-defined \cite[Theorem 1.4]{BR}, and the proof of \cite[Theorem 4.2]{BR} shows that $\rho'\otimes \Qp$ is well-defined. Goncharov proved that $\rho''$ is well-defined \cite[Theorem 2.17]{GA}. In this paper, we use different method to study this map and do not rely on the results of Brunault or Goncharov.
\end{rem}

The map $\tilde{\varpi}$ of Proposition \ref{Prop_existence_varpi} extends to a map $\tilde{H}_{\Gamma_1(N)} \rightarrow \mathcal{K}$ sending $[u,v]^*$ to $\langle 1-\zeta_N^u, 1-\zeta_N^v \rangle$.  For simplicity of notation, we still denote this map by $\tilde{\varpi}$. Fukaya and Kato defined a map $\infty(0,1):H_{\textnormal{\'et}}^2(\mathcal{Y}_1(N)\otimes\Z[1/p],\Zp(2))\rightarrow H^2(\Z[\zeta_{N},\tfrac{1}{Np}],\Zp(2))$ \cite[Section 5.1]{Fukaya_Kato} and proved the following theorem.
\begin{thm}[Fukaya--Kato]\label{Fukayakato1}
	We have 
	\[
	\tilde{\varpi}([u,v]^{*})=\infty(0,1)(g_{0,\frac{u}{N}}\cup g_{0,\frac{v}{N}})
	\]
	Moreover, restricted to the image of the Beilinson--Kato elements, the map $\infty(0,1)$ is annihilated by the Hecke operators $T_{\ell}^*-\ell\cdot \langle \ell^{-1} \rangle -1$ for primes $\ell \neq N$ and by $T_{N}^*-1$.
\end{thm}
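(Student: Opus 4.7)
The plan is to deduce this from the framework of Fukaya--Kato \cite{Fukaya_Kato}, specialised to our level $\Gamma_1(N)$ setting (rather than their tower $\Gamma_1(Np^r)$).

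First I would establish the identity $\tilde{\varpi}([u,v]^{*}) = \infty(0,1)(g_{0,u/N} \cup g_{0,v/N})$. The map $\infty(0,1)$ is defined as the pullback along the canonical morphism $\operatorname{Spec}(\Z[\zeta_N, \tfrac{1}{Np}]) \to \mathcal{Y}_1(N) \otimes \Z[\tfrac{1}{p}]$ corresponding to the Tate curve over $\Z[\zeta_N, \tfrac{1}{Np}]$ with its canonical $\mu_N$-level structure. Under this pullback, the Siegel unit $g_{0, a/N}$ specialises to $1-\zeta_N^{a}$, up to a common Tate-type factor that cancels in the cup product. Combined with Remark \ref{comparison_K_etale}, which identifies cup products of units in $H^2_{\text{\'et}}$ with Steinberg symbols in $K_2$ via the \'etale Chern class, this yields $\infty(0,1)(g_{0,u/N} \cup g_{0,v/N}) = \langle 1-\zeta_N^u, 1-\zeta_N^v\rangle$, which is exactly $\tilde{\varpi}([u,v]^{*})$ by construction.

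Next, for the Hecke annihilation statement, I would follow the strategy of \cite[\S5]{Fukaya_Kato}. For a prime $\ell\neq N$, I would realise the dual Hecke operator $T_{\ell}^{*}$ as a correspondence on $\mathcal{Y}_1(N)$ and compute its action on $g_{0,u/N}\cup g_{0,v/N}$ by means of the distribution (norm-compatibility) relations satisfied by Siegel units. These relations produce the eigenvalue $\ell\cdot\langle \ell^{-1}\rangle+1$ on the Beilinson--Kato classes, up to Eisenstein/boundary contributions supported at the supplementary cusps and components of the $\ell$-correspondence; precisely these contributions are killed by $\infty(0,1)$, because the Tate point used to define $\infty(0,1)$ does not meet them. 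For $T_N^{*}$, a parallel analysis of the degenerate $N$-correspondence together with the explicit behaviour of $g_{0, \cdot /N}$ at the cusp $\infty$ shows that $(T_N^{*}-1)(g_{0,u/N}\cup g_{0,v/N})$ lies in $\operatorname{Ker}(\infty(0,1))$.

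The main obstacle will be the bookkeeping around Hecke normalisations: matching our Albanese-functorial $T_\ell$ with the Picard-functorial $T_\ell^{*}$ used by Fukaya--Kato, keeping careful track of the $\langle \ell^{-1}\rangle$ twist (which is exactly why the eigenvalue reads $\ell\cdot\langle \ell^{-1}\rangle+1$ here as opposed to $\ell\cdot\langle \ell\rangle+1$ in Conjecture \ref{Sharifi_conjecture}), and making the comparison between our level $\Gamma_1(N)$ and the ordinary $\Lambda$-adic tower at level $\Gamma_1(Np^r)$ of \cite[Theorem 5.2.3]{Fukaya_Kato}. Since $p\nmid N$ in our situation there is no ordinary projector or Iwasawa-theoretic limit to take, so the descent to finite level should be considerably simpler than in loc.\ cit.
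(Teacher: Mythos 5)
The paper does not actually prove this statement: it is quoted from Fukaya--Kato, and the entire ``proof'' in the text is the citation to \cite[Propositions 5.1.5 and 5.1.7, Theorem 5.1.9]{Fukaya_Kato}, so there is nothing in the paper against which to compare your argument in detail. Your sketch --- specialise the Siegel-unit cup products at the cusp $\infty$ via the Tate curve to land on $\langle 1-\zeta_N^u,1-\zeta_N^v\rangle$, then use the distribution relations of Siegel units under the Hecke correspondences to extract the Eisenstein eigenvalue $\ell\cdot\langle\ell^{-1}\rangle+1$ --- is precisely the strategy of the cited results; the one imprecision worth flagging is that $\infty(0,1)$ is not literally pullback along a point of the open curve $\mathcal{Y}_1(N)$ (the cusp lies on the boundary), but is defined through $q$-expansions over $\Z[\zeta_N,\tfrac{1}{Np}]((q))$ and a splitting of $K_2$ of that Laurent-series ring, where the $q^{(c^2-1)/12}$ factors in ${}_c\theta$ require exactly the care you allude to with your ``common Tate-type factor.''
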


\begin{proof}
	For the proof, see \cite[Proposition 5.1.5, Proposition 5.1.7, Theorem 5.1.9]{Fukaya_Kato}.
\end{proof}

\begin{corr}\label{Cor2}
	If Conjecture \ref{Eisensein_rho'} is true, then Conjecture \ref{Sharifi_conjecture} is true. 
\end{corr}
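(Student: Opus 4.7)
The plan is to use the factorization $\tilde{\varpi} = \infty(0,1) \circ \rho'$ established by Fukaya and Kato (Theorem \ref{Fukayakato1}) together with the anti-Hecke-equivariance of $\rho'$ to transport the Hecke action across $\rho'$, reducing Conjecture \ref{Sharifi_conjecture} to the Fukaya--Kato annihilation statement for $\infty(0,1)$ on the image of $\rho'$.

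First, I would observe that, assuming Conjecture \ref{Eisensein_rho'}, the map $\rho'$ is a well-defined group homomorphism on all of $\tilde{H}_{\Gamma_1(N)}$. Under the identification $\mathcal{K} \simeq H^2(\Z[\zeta_N, \tfrac{1}{Np}], \Zp(2))$ from Remark \ref{comparison_K_etale}, both $\tilde{\varpi}$ and $\infty(0,1) \circ \rho'$ are then group homomorphisms $\tilde{H}_{\Gamma_1(N)} \to \mathcal{K}$, and the first part of Theorem \ref{Fukayakato1} says they coincide on the generating Manin symbols $[u,v]^*$. Hence they coincide as homomorphisms on all of $\tilde{H}_{\Gamma_1(N)}$, and in particular agree on the plus part $(\tilde{H}_{\Gamma_1(N)})_+$ on which Conjecture \ref{Sharifi_conjecture} is formulated.

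Next, for a prime $\ell \neq N$, applying the anti-Hecke-equivariance of $\rho'$ would give
\[ \rho' \circ (T_\ell - \ell \langle \ell \rangle - 1) = (T_\ell^* - \ell \langle \ell^{-1} \rangle - 1) \circ \rho', \]
and similarly $\rho' \circ (T_N - 1) = (T_N^* - 1) \circ \rho'$. Composing on the left with $\infty(0,1)$, the left-hand side becomes $\tilde{\varpi} \circ (T_\ell - \ell \langle \ell \rangle - 1)$ (resp.\ $\tilde{\varpi}\circ(T_N-1)$), while the right-hand side vanishes by the second part of Theorem \ref{Fukayakato1}, which says precisely that $\infty(0,1)$ is killed by $T_\ell^* - \ell \langle \ell^{-1} \rangle - 1$ (resp.\ $T_N^*-1$) on the image of the Beilinson--Kato elements, i.e.\ on the image of $\rho'$. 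This yields Conjecture \ref{Sharifi_conjecture}.

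The only point that will require care is the bookkeeping of diamond operators: I need the anti-Hecke-equivariance of $\rho'$ to send $\langle \ell \rangle$ on the homology side (Albanese action) to $\langle \ell^{-1} \rangle$ on the cohomology side (Picard action), matching the indexing in Theorem \ref{Fukayakato1}. This should reduce to the standard compatibility between Albanese pushforward by the automorphism $\langle \ell \rangle$ of $X_1(N)$ and Picard pullback by its inverse $\langle \ell^{-1} \rangle$, which is the same principle that interchanges $T_n$ and $T_n^*$ under Poincar\'e duality. Beyond this convention check, the corollary is a formal consequence of the two inputs, and the main obstacle has effectively been absorbed into the hypothesis that $\rho'$ exists and is anti-Hecke-equivariant.
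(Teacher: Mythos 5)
Your proposal is correct and is essentially the argument the paper intends: the corollary is stated without a written proof precisely because it is the formal composition $\tilde{\varpi} = \infty(0,1)\circ\rho'$ from Theorem \ref{Fukayakato1} combined with the assumed anti-Hecke-equivariance of $\rho'$ and the annihilation of $\infty(0,1)$ on the image of the Beilinson--Kato elements. Your added care about matching $\langle \ell \rangle$ with $\langle \ell^{-1} \rangle$ under the duality convention is exactly the right bookkeeping and is consistent with the operators appearing in Theorem \ref{Fukayakato1}.
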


\begin{defn}
	Let $\rho$ be the following map:
	\[ \rho:\tilde{H}_{\Gamma_1(N)}\xrightarrow{\rho'} H_{\textnormal{\'et}}^2(\mathcal{Y}_{1}(N)\otimes\Z[1/p],\Zp(2))\xrightarrow{g} H^1(\Z[\tfrac{1}{Np}],H_{\textnormal{\'et}}^1(Y_{1}(N))(2)).\]
	Here the map $g$ comes from the following exact sequence, which is derived from the Hochschild--Serre spectral sequence:
	\begin{equation*}
	0\rightarrow H^2(\Z[\tfrac{1}{Np}],\Zp(2))\rightarrow H_{\textnormal{\'et}}^{2}(\mathcal{Y}_{1}(N)\otimes\Z[1/p],\Zp(2))\overset{g}{\rightarrow}  H^1(\Z[\tfrac{1}{Np}],H_{\text{\'et}}^1(Y_{1}(N))(2)))\rightarrow 0.
	\end{equation*}	
\end{defn}

In the following paragraphs, following the method in \cite{Fukaya_Kato}, we prove the following result.
\begin{thm}\label{regulator_hecke_equivariant}
	The map $\rho$
	is well-defined and anti-Hecke-equivariant.
\end{thm}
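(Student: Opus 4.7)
The plan is to adapt the Fukaya--Kato argument \cite[Theorem 5.2.3]{Fukaya_Kato} from level $Np^r$ (ordinary part) to the plain level $\Gamma_1(N)$ situation. Both claims (well-definedness and anti-Hecke equivariance) are proved in parallel by the same principle: the naive assignment $[u,v]^{*}\mapsto g_{0,u/N}\cup g_{0,v/N}$ may fail in $H_{\textnormal{\'et}}^{2}(\mathcal{Y}_{1}(N)\otimes\Z[1/p],\Zp(2))$ to respect the Manin relations or to intertwine the Hecke action with $T_{\ell}^{*}$, but the ``defect'' in each case lies in the image of $H^{2}(\Z[\tfrac{1}{Np}],\Zp(2))$ in the Hochschild--Serre exact sequence, hence dies after applying $g$.

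For well-definedness, I would use Proposition \ref{generation_Manin_C_0^{(p^r)}} applied to $\Gamma=\Gamma_{1}(N)$ to identify explicitly the kernel of $\xi_{\Gamma_{1}(N)}$ as generated by the right $\sigma$- and $\tau$-invariants together with the symbols $[-d,d]$. For each such relation I would compute the corresponding combination of Beilinson--Kato elements using the Steinberg relation together with the distribution/transformation formulas for Siegel units from \cite[\S2]{Fukaya_Kato}. The upshot should be that each combination is a class pulled back from $H^{2}(\Z[\tfrac{1}{Np}],\Zp(2))$, so it vanishes after $g$. For the anti-Hecke equivariance, I would decompose $T_{\ell}$ (with $\ell\neq N$) on Manin symbols via the usual coset representatives, and compare with the Picard-functorial action of $T_{\ell}^{*}$ on $g_{0,u/N}\cup g_{0,v/N}$ via the Hecke correspondence $\mathcal{Y}_{1}(N)\leftarrow \mathcal{Y}_{1}(N,\ell)\to \mathcal{Y}_{1}(N)$. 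The matching of the two formulas rests on the norm-compatibility of Siegel units under the degeneracy maps and is again expected to hold up to a term in $H^{2}(\Z[\tfrac{1}{Np}],\Zp(2))$; the case $\ell=N$ is handled similarly using the $N$-isogeny.

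The main obstacle is precisely that in the Fukaya--Kato setting the analogue of $H^{2}(\Z[\tfrac{1}{Np}],\Zp(2))$ vanishes (\cf \cite[Lemma 5.2.5]{Fukaya_Kato}), so the corresponding map is already well-defined and Hecke-equivariant at the level of $H^{2}_{\textnormal{\'et}}$ itself. In our situation this group is nonzero (and, as explained in the introduction, is isomorphic to $(\Z/N\Z)^{\times}\otimes_{\Z}\Zp$), so one has to be genuinely careful: the ``constant'' contributions from the Siegel unit identities cannot simply be discarded, they must be shown to come from $\Z[\tfrac{1}{Np}]$ rather than merely from the base of a larger tower. Concretely, this requires tracking the residues of each Siegel unit identity at the cusps and at the places above $N$, and verifying that what remains after killing the Hochschild--Serre edge is precisely the Hecke-equivariant map we want. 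Once this bookkeeping is set up, anti-Hecke equivariance then follows, and together with Theorem \ref{Fukayakato1} this will yield Theorem \ref{thm_Sharifi_mod_norms}.
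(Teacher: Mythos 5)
Your proposal identifies the right target (the defects of $[u,v]^{*}\mapsto g_{0,u/N}\cup g_{0,v/N}$ must be shown to lie in the edge term $H^2(\Z[\tfrac{1}{Np}],\Zp(2))$ of the Hochschild--Serre sequence, which $g$ kills), but the method you propose for reaching it --- a direct verification of the Manin relations and of the Hecke action on cup products of Siegel units via the Steinberg relation and the distribution formulas --- is precisely the approach that is not known to work, and it is not the one the paper uses. The distribution relations for Siegel units control their behaviour under norm/degeneracy maps in a tower (this is Proposition \ref{Prop8}), not under the $\SL_2(\Z)$-action entering the Manin relations, nor under the coset decomposition of $T_{\ell}$ at a fixed level; the explicit identity one would already have to verify for $\ell=5$ is recorded in Remark \ref{Remark_identity_Buisioc}, and the authors state they cannot prove it directly even modulo norms (which is exactly the strength of Theorem \ref{thm_Sharifi_mod_norms}). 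Indeed, the assertion that $\rho'$ itself is well-defined and anti-Hecke-equivariant is kept as Conjecture \ref{Eisensein_rho'}; your plan, if it could be carried out by ``tracking residues'' of Siegel-unit identities, would in effect prove a statement of essentially that difficulty by hand.

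The paper's actual proof avoids all explicit Siegel-unit computations at fixed level. It invokes the Fukaya--Kato zeta morphism $z_{1,N,p^{\infty}}$ over the cyclotomic $p$-tower (Theorem \ref{Fukayakato2}), which is already well-defined and Hecke-equivariant as a $\boldsymbol{\Lambda}$-module map with $Q(\boldsymbol{\Lambda})$-coefficients; specializing to the first layer and applying the norm down to $\Q$ yields a well-defined anti-Hecke-equivariant map sending $[u,v]^{*}$ to $\eta_{p}\cdot z_{1,N,1}(u,v)$, where $\eta_{p}=1-T_p^{*}+p\langle p\rangle^{-1}$ is the Euler factor at $p$ (Proposition \ref{Prop9}, Remark \ref{Rem10}). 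The defect of the desired map $\rho$ is then shown, via the Fukaya--Kato boundary computation, to lie in $H^1(\Z[\tfrac{1}{Np}],H_{\textnormal{\'et}}^1(X_{1}(N))(2)\otimes\Qp)$, on which $\eta_{p}$ is injective by the Weil bounds (Lemma \ref{Prop10}); since the defect is killed by $\eta_{p}$, it vanishes (Proposition \ref{Prop11}). The integral statement then follows from the $\Zp$-torsion-freeness of $H^1(\Z[\tfrac{1}{Np}],H_{\textnormal{\'et}}^1(Y_{1}(N))(2))$ (Lemma \ref{Prop12}). The two ideas missing from your proposal are exactly these: the passage to the cyclotomic tower, where well-definedness and equivariance are inherited from the zeta morphism rather than verified, and the descent by division by the injective operator $\eta_{p}$.
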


\begin{rem}
	Based on the above discussion, we have the following commutative diagram:
	\begin{equation*}
	\begin{tikzcd}
	& &H^2(\Z[\zeta_{N}, \tfrac{1}{Np}],\Zp(2))/H^2(\Z[\tfrac{1}{Np}],\Zp(2))\\
	\tilde{H}_{\Gamma_1(N)}\arrow[urr, shift left=1.5ex,"\tilde{\varpi}"]\arrow[r,"\rho'"]& H_{\textnormal{\'et}}^2(\mathcal{Y}_{1}(N)\otimes\Z[1/p],\Zp(2)) \arrow[ur,"\infty"] \arrow[r,"g"] & H^1(\Z[\tfrac{1}{Np}], H_{\textnormal{\'et}}^1(Y_{1}(N))(2))\arrow[u] 
	\end{tikzcd}
	\end{equation*}
By Theorems \ref{Fukayakato1} and  \ref{regulator_hecke_equivariant}, the map 
\[\tilde{H}_{\Gamma_1(N)}\xrightarrow{\tilde{\varpi}}H^2(\Z[\zeta_{N}, \tfrac{1}{Np}],\Zp(2))\rightarrow H^2(\Z[\zeta_{N}, \tfrac{1}{Np}],\Zp(2))/H^2(\Z[\tfrac{1}{Np}],\Zp(2))\]
is annihilated by the Hecke operators $T_{\ell}-\ell\cdot \langle \ell \rangle -1$ for primes $\ell \neq N$ and by $T_{N}-1$.	
\end{rem}

\subsection{Beilinson--Kato elements}
Let $M_1,M_2\in\Z_{+}, M_1+M_2\geq 5$. Let $\mathcal{Y}(M_1,M_2)$ be the $\Z[\tfrac{1}{M_1M_2}]$-scheme that represents  the functor of triples $(E,e_{1},e_{2})$, where $e_{1}$ has order $M_1$, $e_{2}$ has order $M_2$, and  $\Z/M_{1}\Z\times\Z/M_{2}\Z\rightarrow E: (a,b)\mapsto ae_{1}+be_{2}$ is injective.

Following \cite[\S 1]{Kato_p-adic}, we recall the definition of Siegel units. 
\begin{defn}\label{Def 5.1}
For $(\alpha,\beta)=(\frac{a}{M_1},\frac{b}{M_2})\in (\frac{\frac{1}{M_1}\Z}{\Z} \times\frac{\frac{1}{M_2}\Z}{\Z})\backslash (0,0)$ and for an integer $c>1$ prime to $6M_1M_2$, the Siegel unit ${_{c}}g_{\alpha,\beta}$ is an element of $\O(\mathcal{Y}(M_1,M_2))^{\times}$. It is characterized by its $q$-expansion as follows. For $t\in \C$, let 
\[_{c}\theta(t)=q^{\tfrac{c^2-1}{12}}(-t)^{\tfrac{c-c^2}{2}}\prod_{n\geq 0}(1-q^nt)^{c^2}\cdot \prod_{n\geq 1}(1-q^nt^{-1})^{c^2}\cdot\prod_{n\geq 0}(1-q^nt^{c})^{-1}\cdot \prod_{n\geq 1}(1-q^nt^{-c})^{-1}\in\C[[q]][q^{-1}]^{\times}.\]
The Siegel unit ${_{c}}g_{\alpha,\beta}$ has the $q$-expansion
\[{_{c}}g_{\alpha,\beta}=_{c}\theta(q^{\tfrac{a}{M_1}}\zeta_{M_2}^b) \in \Z[\zeta_{M_2}, \tfrac{1}{M_1M_2}][[q^{\tfrac{1}{M_1}}]][q^{-1}]^{\times}.\]
Taking $c$ such that $c\equiv 1\bmod M_1$ and $c\equiv 1\bmod M_2$ let
\[g_{\alpha,\beta}={_{c}}g_{\alpha,\beta}\otimes(c^2-1)^{-1}\in \O(\mathcal{Y}(M_1,M_2))^{\times}\otimes\Q.\]
\end{defn}

In \cite{Fukaya_Kato}, the following objects are defined.
\begin{defn}[Fukaya--Kato]\label{Def5}
Let $R=\begin{psmallmatrix}
s&u\\t&v
\end{psmallmatrix}\in M_{2}(\Z)$ such that $(s,u)\neq(0,0)$ and $(t,v)\neq (0,0)$. 
	Let $c,d \in \Z$ coprime to $6M_1M_2$.  We define
\[_{c,d}z_{M_1,M_2}(R)={}_{c}g_{\frac{s}{M_1},\frac{u}{M_2}}\cup {}_{d}g_{\frac{t}{M_1},\frac{v}{M_{2}}}\in K_{2}(\mathcal{Y}(M_1,M_2)).
\]

Let $z_{M_1,M_2}(R)=g_{\frac{s}{M_1},\frac{u}{M_2}}\cup g_{\frac{t}{M_1},\frac{v}{M_2}}\in K_{2}(\mathcal{Y}(M_1,M_2))\otimes\Q$.
\end{defn}

From now on, assume that $M\geq 4$ and $m\geq 1$.
\begin{defn}[Fukaya--Kato]\label{Def 5.9}
Let $u,v\in\Z/M\Z$ such that $gcd(u,v,M)=1$. Take lifts $u',v'
\in\Z$ of $u$ and $v$ and integers $s,t$ such that $sv'-tu'=1$.  Let $_{c,d}z_{1,M,m}(u,v)$ be the image of $_{c,d}z_{m,Mm}\begin{psmallmatrix}
s&u'\\t&v'
\end{psmallmatrix}$ under the norm map $K_{2}(\mathcal{Y}(m,Mm))\rightarrow K_{2}(\mathcal{Y}_{1}(M)\otimes\Z[\frac{1}{Mm},\zeta_{m}])$. Here, the map from $\mathcal{Y}(m,Mm)$ to $\mathcal{Y}_{1}(M)\otimes\Z[\frac{1}{Mm},\zeta_{m}]$ is defined as follows:
\[(E,e_1,e_{2})\mapsto (E,me_{2},[e_1,Me_{2}]),\]
where $E$ is an elliptic curve and $e_{1}\in E[m], e_{2}\in E[mM]$, and $[\,\,,\,]$ is the Weil pairing. 
\end{defn}

The following result is derived from \cite[Section 2]{Fukaya_Kato}.
\iffalse
(we denote by a star the dual Hecke operator, induced by Picard functoriality).\fi

\begin{prop}\label{Prop8}
Let $L\geq 1$, and let $m,M$ be as in the beginning of this section. Then the norm map $K_2(Y_{1}(M)\otimes\Z[\frac{1}{mL},\zeta_{mL}])\rightarrow K_2(Y_{1}(M)\otimes\Z[\frac{1}{mL},\zeta_{m}])$ sends $_{c,d}z_{1,M,mL}(u,v)$  to
\[\prod_{\ell\in C'}(1-\sigma_{\ell}^{-1}\otimes T_{\ell}^{*}+\sigma_{\ell}^{-2}\otimes \langle \ell \rangle^{-1} \ell)\prod_{\ell\in C}(1-\sigma_{\ell}^{-1}\otimes T_{\ell}^{*} )_{c,d}z_{1,M,m}(u,v),\]
where $C'$ denotes the set of all prime numbers which divide $L$ but do not divide $mM$, $C$ denotes the set of all primes which divide both $L$ and $M$ but do not divide $m$, and $\sigma_{\ell}\in\Gal(\Q(\zeta_{m})/\Q)$ is such that $\sigma_{\ell}(\zeta_{m})=\zeta_{m}^{\ell}$.
\end{prop}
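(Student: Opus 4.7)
The strategy is to reduce to the case where $L = \ell$ is prime, and in that case to work out the two-step norm $K_2(\mathcal{Y}(m\ell, Mm\ell)) \to K_2(\mathcal{Y}(m,Mm))$ explicitly using the distribution relations for Siegel units, then to package the result in terms of the Hecke operator $T_\ell^*$ and the diamond operator $\langle \ell \rangle$. Since the formula on the right-hand side is multiplicative in $L$ in the obvious way (the factors attached to distinct primes $\ell_1 \neq \ell_2$ commute and act on disjoint Euler factors), an induction on the number of prime divisors of $L$ reduces the statement to the case $L = \ell$ prime. A short additional argument handles primes $\ell$ dividing $m$, where the norm is the identity of the trivial extension and no Hecke factor appears.

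First I would unwind the definition of $_{c,d}z_{1,M,m\ell}(u,v)$. By Definition \ref{Def 5.9}, this element is the pushforward to $K_2(\mathcal{Y}_1(M) \otimes \Z[\tfrac{1}{Mm\ell},\zeta_{m\ell}])$ of the symbol $_{c,d}z_{m\ell, Mm\ell}\begin{psmallmatrix}s & u' \\ t & v'\end{psmallmatrix}$ on $\mathcal{Y}(m\ell, Mm\ell)$. Factoring the projection $\mathcal{Y}(m\ell, Mm\ell) \to \mathcal{Y}_1(M) \otimes \Z[\tfrac{1}{Mm\ell}, \zeta_{m\ell}]$ through the intermediate scheme $\mathcal{Y}_1(M) \otimes \Z[\tfrac{1}{Mm\ell}, \zeta_{m}]$ via the norm $\zeta_{m\ell} \mapsto \zeta_m$ lets me compare the two Beilinson--Kato elements through a correspondence of modular curves. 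The key input is the norm-compatibility of Siegel units (Kato's distribution relation from \cite[\S 2]{Fukaya_Kato}), which tells one exactly how a product $g_{\alpha,\beta}$ at level $m\ell$ corestricts at level $m$.

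Second I would match the resulting geometric sum with the Hecke-operator expression. The corestriction from $\mathcal{Y}(m\ell, Mm\ell)$ to $\mathcal{Y}(m, Mm)$ decomposes as the sum over the fibres of the degeneracy maps, and the combinatorics of these fibres is precisely what governs the $\ell$-th Hecke correspondence on $Y_1(M)$. When $\ell \nmid M$, both degeneracy maps are available and one obtains the ``$T_\ell^*$ minus Frobenius correction'' factor $1 - \sigma_\ell^{-1} \otimes T_\ell^* + \sigma_\ell^{-2} \otimes \langle \ell \rangle^{-1} \ell$, with the twist by $\sigma_\ell^{-k}$ recording how the cyclotomic part of the field extension $\Q(\zeta_{m\ell})/\Q(\zeta_m)$ sees each component. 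When $\ell \mid M$ but $\ell \nmid m$, one of the degeneracy maps collapses and the $\langle \ell \rangle$-term disappears, yielding the second factor. The powers of $\sigma_\ell$ come from tracking how the Weil pairing $[e_1, Me_2]$ transforms under the degeneracy maps $(E, e_1, e_2) \mapsto (E/\langle \ell e_2\rangle, \overline{e_1}, \overline{e_2})$ and similar.

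The main obstacle is bookkeeping: one must track three layered actions simultaneously — the Galois action on $\zeta_{m\ell}$, the Hecke correspondences induced on modular curves by degeneracy maps, and the combinatorial identity $sv' - tu' = 1$ defining the matrix in Definition \ref{Def 5.9} — and verify that, after pushing to $K_2(\mathcal{Y}_1(M) \otimes \Z[\tfrac{1}{mL},\zeta_m])$, the contributions rearrange into the displayed product. Because Fukaya--Kato worked with the dual Hecke operators via Picard functoriality, the outcome involves $T_\ell^*$ rather than $T_\ell$; this is consistent with the caveat preceding the proposition. The detailed computation is exactly the content of the norm-compatibility statements in \cite[\S 2]{Fukaya_Kato}, and the proposition is obtained by specializing their formalism to the scheme $\mathcal{Y}_1(M)$ and the elements $_{c,d}z_{1,M,m}(u,v)$.
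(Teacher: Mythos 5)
Your proposal is correct and ultimately rests on the same source as the paper: the paper's entire proof is the one-line citation ``This follows directly from \cite[Proposition 2.2.2]{Fukaya_Kato}'', and your sketch simply unpacks what that proposition encodes (the distribution relations for Siegel units combined with the combinatorics of the degeneracy maps) before deferring the detailed computation back to \cite[\S 2]{Fukaya_Kato}. One small imprecision: for a prime $\ell$ dividing both $L$ and $m$ the extension $\Q(\zeta_{m\ell})/\Q(\zeta_m)$ is \emph{not} trivial --- the correct statement is that the norm relation in that case carries no Euler factor --- but this does not affect the argument, since $C$ and $C'$ both exclude such primes.
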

\begin{proof}
This follows directly from \cite[Proposition 2.2.2]{Fukaya_Kato}.
\end{proof}

\begin{notation}\label{Def6}
Let $_{c,d}[u,v]^*\in\Z[\Gal(\Q(\zeta_{m})/\Q)]\otimes H^1(Y_{1}(M)(\C),\Z)$
be
$$_{c,d}[u,v]^*=c^2d^2\otimes[u,v]^*-c^2\sigma_{d}\otimes[u,dv]^*-d^2\sigma_{c}\otimes[cu,v]^*+\sigma_{cd}\otimes[cu,dv]^*,$$
where $c,d$ are prime to $6Mm$.
\end{notation}

\subsection{The map \texorpdfstring{$z_{1,N,p^{\infty}}$}{Lg}}
In this section, we focus on the modular curve $Y_{1}(N)$. 
\iffalse
{\color{blue}{The notation $\Lambda$ has already been used (\cf introduction). I changed it to $\boldsymbol{\Lambda}$. Is it OK?}}
\fi
Let
\[\boldsymbol{\Lambda} :=\Zp[[\Zp^{\times}]]\cong\Zp[[\Gal(\Q(\zeta_{p^{\infty}})/\Q)]].\]

\begin{notation}
We have the following composition of maps
\[K_{2}(\mathcal{Y}_{1}(N)\otimes\Z[\zeta_{p^n}, \frac{1}{Np}])\xrightarrow{\textnormal{Chern class}} H_{\textnormal{\'et}}^{2}(\mathcal{Y}_{1}(N) \otimes\Z[\zeta_{p^n}, \frac{1}{Np}] ,\Zp(2))\rightarrow H^1(\Z[\zeta_{p^n}, \tfrac{1}{Np}],H_{\textnormal{\'et}}^1(Y_{1}(N))(2)).\]
To simplify the notation, we also use $_{c,d}z_{1,N,p^n}(u,v)$ to denote the image of $_{c,d}z_{1,N,p^n}(u,v)$ in $H^1(\Z[\zeta_{p^n}, \tfrac{1}{Np}],H_{\textnormal{\'et}}^1(Y_{1}(N))(2))$.
\end{notation}

\begin{rem}
Since $p\nmid N$, $z_{1,N,1}(u,v)=g_{0,\frac{u}{N}}\cup g_{0,\frac{v}{N}}\in H^1(\Z[\tfrac{1}{Np}],H_{\textnormal{\'et}}^1(Y_{1}(N))(2))$. 
\end{rem}

Fukaya and Kato proved the following theorem.
\begin{thm}[Fukaya--Kato]\label{Fukayakato2}
Let $Q(\boldsymbol{\Lambda})$ be the total quotient ring of $\boldsymbol{\Lambda}$. There exists a unique Hecke-equivariant $\boldsymbol{\Lambda}$-module homomorphism
$$z_{1,N,p^{\infty}}:\boldsymbol{\Lambda}\otimes_{\Z[\pm 1]} H^1(Y_{1}(N)(\C),\Z)(1)\rightarrow \varprojlim_{n} H^1(\Z[\zeta_{p^n}, \tfrac{1}{Np}], H^1_{\text{\'et}}(Y_{1}(N))(2))\otimes Q(\boldsymbol{\Lambda})$$
satisfying the following conditions. Here $-1$ in $\{\pm 1\}$ acts on $\boldsymbol{\Lambda}$ as $\sigma_{-1}$ and acts on $H^1(Y_1(N)(\C),\Z)$ by the complex
conjugation on $Y_1(N)(\C)$.
\begin{enumerate}
    %\item For any $\gamma\in\Lambda\otimes_{\Z[\pm 1]} H^1(Y_{1}(M)(\C),\Z)(1)$ and for any $c,d$ such that $(cd,6Np)=1$ and $c\equiv d\equiv 1\mod N$, $(\sigma_{c}-c)(\sigma_{d}-d)z_{1,N,p^{\infty}}(\gamma)$ belongs to the image of $\widetilde{\mathfrak{T}}$ in $\widetilde{\mathfrak{T}}\otimes Q(\Lambda)$.
    \item $z_{1,N,p^{\infty}}$ sends $_{c,d}[u,v]^{*}$ to $(-_{c,d}z_{1,N,p^n}(u,v))_{n\geq 1}$, and
    \item the image of $1\otimes[u,v]^{*}$ in $H^1(\Z[\zeta_{p^n}, \tfrac{1}{Np}], H_{\text{\'et}}^1(Y_{1}(N))(2))\otimes\Qp$ is $-z_{1,N,p^n}(u,v)$.
\end{enumerate}
\end{thm}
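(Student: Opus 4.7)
The construction is adapted from Fukaya--Kato's zeta map for Hida families, specialized to weight $2$ and tame level $N$ coprime to $p$. I would proceed in three main steps.

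First, I would establish the norm compatibility of the Beilinson--Kato elements along the cyclotomic $\Zp$-extension. Applying Proposition \ref{Prop8} with $M=N$, $m=p^n$ and $L=p$, I observe that for $n\geq 1$ both sets $C$ and $C'$ are empty: $C'$ because $p\mid m$, and $C$ because $\gcd(p,N)=1$. Hence the norm map
$$K_2(Y_1(N)\otimes\Z[\tfrac{1}{Np},\zeta_{p^{n+1}}])\longrightarrow K_2(Y_1(N)\otimes\Z[\tfrac{1}{Np},\zeta_{p^n}])$$
sends $_{c,d}z_{1,N,p^{n+1}}(u,v)$ to $_{c,d}z_{1,N,p^n}(u,v)$ with no Euler factor, so the sequence $(-_{c,d}z_{1,N,p^n}(u,v))_{n\geq 1}$ defines an element of the inverse limit in the statement.

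Second, I would construct the map by ``dividing'' by a Kolyvagin-type twisting factor. For each pair $(c,d)$ of integers coprime to $6Np$, set
$$\mu_{c,d}:=(c^2-\sigma_c)(d^2-\sigma_d)\in\boldsymbol{\Lambda}.$$
The element $\mu_{c,d}$ is not a zero-divisor in $\boldsymbol{\Lambda}$ (its image under any continuous character $\chi\colon \Zp^{\times}\to\overline{\Qp}^{\times}$ is $(c^2-\chi(c))(d^2-\chi(d))$, which is nonzero for suitable $c,d$), hence becomes a unit in $Q(\boldsymbol{\Lambda})$. The key algebraic observation is that when the source $\boldsymbol{\Lambda}\otimes_{\Z[\pm 1]}H^1(Y_1(N)(\C),\Z)(1)$ is regarded over $Q(\boldsymbol{\Lambda})$, the elements $_{c,d}[u,v]^*$ together with $\mu_{c,d}\cdot(1\otimes[u,v]^*)$ span the same submodule: indeed, rewriting Notation \ref{Def6} using the equality $\sigma_a\cdot(1\otimes[u,v]^*)=(1\otimes\sigma_a[u,v]^*)$ where one then uses the diamond-operator identity $\langle a\rangle[u,v]^*=[a^{-1}u,a^{-1}v]^*$ shows that $_{c,d}[u,v]^*\equiv \mu_{c,d}\cdot(1\otimes[u,v]^*)$ modulo a $\boldsymbol{\Lambda}$-combination of symbols $1\otimes[u',v']^*$ already handled by a smaller induction. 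I would then define
$$z_{1,N,p^{\infty}}(1\otimes[u,v]^*):=-\mu_{c,d}^{-1}\cdot(_{c,d}z_{1,N,p^n}(u,v))_{n\geq 1},$$
for any auxiliary $(c,d)$, and extend by $\boldsymbol{\Lambda}$-linearity. Independence of $(c,d)$ is forced by the Kato-type distribution relations (consequence of Proposition \ref{Prop8} at $\ell\neq p$) applied to the two choices $(c,d)$ and $(c',d')$: the two candidate images differ by an element killed by a product $\mu_{c,d}\mu_{c',d'}$, which is a non-zero-divisor in $\boldsymbol{\Lambda}$.

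Third, I would verify the two characterizing properties and the Hecke-equivariance. Property (i) holds by construction: expressing $_{c,d}[u,v]^*$ in terms of the $1\otimes[u',v']^*$ basis and applying the previous formula produces $(-_{c,d}z_{1,N,p^n}(u,v))_n$ since the $\mu_{c,d}$ cancels. Property (ii) follows by noting that, after tensoring with $\Qp$, the ring $\boldsymbol{\Lambda}\otimes\Qp$ is a product of fields indexed by characters of $\Zp^\times$, $\mu_{c,d}$ is invertible there, and specializing the formula at level $n$ yields exactly $-z_{1,N,p^n}(u,v)$. Hecke-equivariance is a consequence of Kato's norm relations for Siegel-unit cup products combined with the transfer from the Manin-symbol action of $T_\ell^*$ to the geometric pushforward on the $K_2$-side, following \cite[\S 2--4]{Fukaya_Kato}. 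Uniqueness of $z_{1,N,p^\infty}$ follows at once from property (ii): any two such maps agree on each $1\otimes[u,v]^*$ after tensoring with $\Qp$, and the target inverse limit is $\Zp$-torsion-free by the Hochschild--Serre sequence, so agreement persists integrally.

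\textbf{Main obstacle.} The principal technical difficulty is verifying the independence of Step 2 from the auxiliary pair $(c,d)$ \emph{before} inverting $\mu_{c,d}$, so as to produce a genuine map to $\varprojlim_n H^1(\cdots)\otimes Q(\boldsymbol{\Lambda})$ rather than a family of partial maps on ideals. This forces a delicate bookkeeping of Galois actions on Siegel units against the Hecke side, and it is precisely here that passage to the total quotient ring $Q(\boldsymbol{\Lambda})$---rather than to a specific localization---becomes essential: it removes the need to control the vanishing loci of $\mu_{c,d}$ and thereby reduces the construction to a clean ``divide by a unit'' procedure in $Q(\boldsymbol{\Lambda})$.
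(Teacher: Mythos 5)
The paper itself does not prove this statement: Remark \ref{Rem9} simply cites \cite[Theorem 3.1.5, Proposition 3.1.8]{Fukaya_Kato} and observes that the argument there does not use the hypothesis $p \mid N$. Your first step (norm compatibility along the cyclotomic tower via Proposition \ref{Prop8} with $C = C' = \emptyset$ for $n \geq 1$) is correct and is indeed how the inverse system $({}_{c,d}z_{1,N,p^n}(u,v))_{n}$ is assembled. The general strategy of your Step 2 --- pass to the submodule spanned by the twisted symbols ${}_{c,d}[u,v]^*$, observe that its cotorsion is killed by non-zero-divisors of $\boldsymbol{\Lambda}$, and extend over $Q(\boldsymbol{\Lambda})$ --- is also the right shape and matches Fukaya--Kato.

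However, there is a genuine gap at the heart of Step 2, and it is exactly the step that carries the content of the theorem. The module $\boldsymbol{\Lambda}\otimes_{\Z[\pm 1]} H^1(Y_{1}(N)(\C),\Z)(1)$ is a quotient of the free module on the symbols $[u,v]^*$ by the Manin relations (the two- and three-term relations and the diamond relation, cf.\ Proposition \ref{generation_Manin_C_0^{(p^r)}}). Defining $z_{1,N,p^{\infty}}$ on the generators $1\otimes[u,v]^*$ and ``extending by $\boldsymbol{\Lambda}$-linearity'' is not legitimate until you verify that the images $-\mu_{c,d}^{-1}\cdot({}_{c,d}z_{1,N,p^n}(u,v))_n$ satisfy the corresponding relations among Beilinson--Kato elements in Galois cohomology; this is precisely the difficult point of Fukaya--Kato's Theorem 3.1.5, and your proposal never addresses it. Moreover, the identity you use to reduce to this situation, namely ${}_{c,d}[u,v]^*\equiv \mu_{c,d}\cdot(1\otimes[u,v]^*)$ with $\mu_{c,d}=(c^2-\sigma_c)(d^2-\sigma_d)$ ``modulo symbols handled by a smaller induction,'' does not hold as stated: from Notation \ref{Def6} the correction terms involve $[u,dv]^*$, $[cu,v]^*$ and $[cu,dv]^*$, and since the diamond operator satisfies $\langle j\rangle[u,v]^*=[j^{-1}u,j^{-1}v]^*$ (rescaling both entries simultaneously), none of these is a $\boldsymbol{\Lambda}$- or Hecke-multiple of $[u,v]^*$, and there is no well-founded ordering in which they are ``smaller.'' Consequently your verification of properties (i) and (ii), and of Hecke-equivariance, rests on a map that has not been shown to exist. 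To repair this you would have to reproduce Fukaya--Kato's argument that the assignment ${}_{c,d}[u,v]^*\mapsto -{}_{c,d}z_{1,N,p^n}(u,v)$ kills the Manin relations (e.g.\ by reducing, via an injective specialization such as the operator $\eta_p$ of Lemma \ref{Prop10} or the $\infty$-map, to relations that can be checked explicitly), which is the substance of the cited Theorem 3.1.5 and Proposition 3.1.8.
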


\begin{rem}\label{Rem9}
Theorem \ref{Fukayakato2} follows from \cite[Theorem 3.1.5, Proposition 3.1.8]{Fukaya_Kato}. In our setting, we do not assume that $p$ divides the level but the same proof still works.
\end{rem}

So in the first layer of this tower, we have a Hecke-equivariant map:
\[\Zp[\Gal(\Q(\zeta_{p})/\Q)]\otimes_{\Z[\pm 1]} H^1(Y_{1}(N)(\C),\Z)(1)\rightarrow H^1(\Z[\zeta_{p}, \tfrac{1}{Np}], H_{\text{\'et}}^1(Y_{1}(N))(2))\otimes\Qp,\]
which induces a map
\[Z:H^1(Y_{1}(N)(\C),\Z)(1)\rightarrow H^1(\Z[\zeta_{p}, \tfrac{1}{Np}], H_{\text{\'et}}^1(Y_{1}(N))(2))\otimes\Qp\]
sending $[u,v]^{*}$ to $-z_{1,N,p}(u,v)$.

\begin{prop}\label{Prop9}
The norm map 
$$H^1(\Z[\zeta_{p}, \tfrac{1}{Np}], H_{\textnormal{\'et}}^1(Y_{1}(N))(2))\otimes\Qp\rightarrow H^1(\Z[\tfrac{1}{Np}], H_{\textnormal{\'et}}^1(Y_{1}(N))(2))\otimes\Qp,$$
takes $z_{1,N,p}(u,v)$ to $(1-T_p^{*}+p\langle p\rangle ^{-1})z_{1,N,1}(u,v)$.
\end{prop}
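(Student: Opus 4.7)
The plan is to specialize Proposition \ref{Prop8} to $M=N$, $m=1$, $L=p$, and then transfer the resulting $K_2$-identity to the cohomology target of Proposition \ref{Prop9}. Since $p\nmid 6N$, the set $C'$ of primes dividing $L=p$ but not $mM=N$ is $\{p\}$, the set $C$ of primes dividing both $L$ and $M$ but not $m$ is empty, and $\Q(\zeta_m)=\Q$, so $\sigma_p=1$. Proposition \ref{Prop8} therefore specializes to
$$\Norm\bigl({}_{c,d}z_{1,N,p}(u,v)\bigr)=(1-T_p^{*}+p\langle p\rangle^{-1})\cdot{}_{c,d}z_{1,N,1}(u,v)$$
in $K_2(Y_1(N)\otimes\Z[\tfrac{1}{Np}])$.

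Next I would propagate this identity via the Chern class map followed by the edge morphism of the Hochschild--Serre spectral sequence. The $K$-theoretic norm attached to the finite \'etale cover $\mathrm{Spec}\,\Z[\zeta_p,\tfrac{1}{Np}]\to\mathrm{Spec}\,\Z[\tfrac{1}{Np}]$ corresponds, through these functorial constructions, to the Galois-cohomology corestriction that defines the norm map in Proposition \ref{Prop9}; likewise the Hecke operators $T_p^{*}$ and $\langle p\rangle$ act compatibly on both sides. Hence the displayed identity descends to the asserted equality in $H^1(\Z[\tfrac{1}{Np}], H^1_{\text{\'et}}(Y_1(N))(2))$.

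Finally, I would clear the $(c,d)$-twist. Choose $c,d$ coprime to $6Np$ and congruent to $1$ modulo $N$, as allowed in Definition \ref{Def 5.1}. The defining relation $g_{\alpha,\beta}={}_c g_{\alpha,\beta}\otimes(c^2-1)^{-1}$ (and similarly for $d$) gives $z_{1,N,n}(u,v)=(c^2-1)^{-1}(d^2-1)^{-1}\cdot{}_{c,d}z_{1,N,n}(u,v)$ in $K_2\otimes\Q$, and the same scalar relation transports to the cohomological side after tensoring with $\Qp$. Since $(c^2-1)(d^2-1)$ is a nonzero integer, it is invertible in $\Qp$, and dividing the previous identity by it produces exactly the formula of Proposition \ref{Prop9}. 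The whole argument is bookkeeping built on the norm-compatibility of Beilinson--Kato elements recorded in Proposition \ref{Prop8}; there is no genuine obstacle, only the need to match up the distribution relation for Siegel units with the operator $(1-T_p^{*}+p\langle p\rangle^{-1})$ that appears on the right.
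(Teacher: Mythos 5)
Your argument is correct and is exactly the paper's (one-line) proof: Proposition \ref{Prop9} is the specialization $M=N$, $m=1$, $L=p$ of Proposition \ref{Prop8} (so $C'=\{p\}$, $C=\emptyset$, $\sigma_p=1$), transported through the Chern class map and the Hochschild--Serre edge map. The only nit is in your final normalization step: to get the clean scalar relation ${}_{c,d}z_{1,N,p}(u,v)=(c^2-1)(d^2-1)\,z_{1,N,p}(u,v)$ you need $c,d\equiv 1 \bmod Np$ rather than just $\bmod\ N$, since the Siegel units defining the level-$p$ element live on $\mathcal{Y}(p,Np)$; this is harmless because you are free to choose $c,d$ and everything is tensored with $\Qp$.
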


\begin{proof}
It follows immediately from Proposition \ref{Prop8}.
\end{proof}

\begin{rem}\label{Rem10}
We thus have an anti-Hecke-equivariant map
\[Z':\tilde{H}_{\Gamma_1(N)}\rightarrow H^1(\Z[\tfrac{1}{Np}], H_{\text{\'et}}^1(Y_{1}(N))(2))\otimes \Qp\]
which sends $[u,v]^{*}$ to $(1-T_p^{*}+p\langle p\rangle ^{-1})z_{1,N,1}(u,v)$.
\end{rem}

\begin{lem}\label{Prop10}
The operator 
\[\eta_{p}:= 1-T_p^{*}+p\langle p\rangle ^{-1} \in \End_{\Qp}\big( H^1(\Z[\tfrac{1}{Np}],H_{\textnormal{\'et}}^1(X_{1}(N))(2)\otimes\Qp))\big)\] is injective.
\end{lem}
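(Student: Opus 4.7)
The strategy is to decompose into Hecke-isotypic components and reduce the injectivity of $\eta_p$ on Galois cohomology to a scalar-nonvanishing statement. The $\Qp$-module $M := H^1(\Z[\tfrac{1}{Np}], H^1_{\textnormal{\'et}}(X_{1}(N))(2)) \otimes \Qp$ is finite-dimensional by the standard finiteness of continuous Galois cohomology for finitely generated $\Zp$-modules over the ring of $S$-integers in $\Q$; hence injectivity of the endomorphism $\eta_p$ on $M$ is equivalent to bijectivity, and we may check it after base change to $\overline{\mathbf{Q}}_p$. By Eichler--Shimura, $H^1_{\textnormal{\'et}}(X_{1}(N))(2) \otimes \overline{\mathbf{Q}}_p = \bigoplus_f V_f$, summed over cuspidal weight-two eigenforms $f$ for $\Gamma_1(N)$, where each $V_f$ is the attached two-dimensional $G_\Q$-representation. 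Since the Hecke operators commute with the Galois action, $\eta_p$ acts on each $H^1(\Z[\tfrac{1}{Np}], V_f)$ as the scalar $c_f := 1 - \lambda_p(f) + p\,\chi_f(p)^{-1}$, where $\lambda_p(f)$ is the $T_p^*$-eigenvalue on $V_f$ and $\chi_f$ is the nebentypus. It thus suffices to show $c_f \neq 0$ for every such $f$.

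Suppose for contradiction that $c_f = 0$. Under the standard Eichler--Shimura identification, the characteristic polynomial of $\Frob_p$ on $V_f$ has the form $X^2 - a_p(f) X + \chi_f(p)\, p$, and by Deligne's proof of the Ramanujan--Petersson conjecture for weight-two newforms its complex roots $\alpha, \beta$ satisfy $|\alpha| = |\beta| = \sqrt{p}$ under every embedding. Translating the vanishing $c_f=0$ into the relation $a_p(f) = 1 + p\omega$ with $\omega := \chi_f(p)^{-1}$ a root of unity, and combining $\alpha + \beta = 1 + p\omega$, $\alpha\beta = p\omega^{-1}$ with the identities $\bar\alpha = p/\alpha$, $\bar\beta = p/\beta$ arising from $|\alpha|^2 = |\beta|^2 = p$, one obtains $\overline{\alpha + \beta} = p(\alpha+\beta)/(\alpha\beta) = (\alpha+\beta)\,\omega$, which after multiplication by $\omega$ and rearrangement factors as
\[
(\omega - 1)\bigl(p\omega^2 + (p+1)\omega + p\bigr) = 0.
\]

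For $\omega = 1$, the Frobenius polynomial becomes $(X-1)(X-p)$, whose root $1$ violates $|\cdot|=\sqrt{p}$. For $\omega \neq 1$, $\omega$ would be a root of unity annihilating $pX^2 + (p+1)X + p \in \Z[X]$; however, this polynomial has negative discriminant $1 + 2p - 3p^2$, and a direct check against each cyclotomic polynomial of degree at most two --- namely $\Phi_n$ for $n \in \{1,2,3,4,6\}$ --- shows that none divides it. Hence no root of unity can be a zero of it, yielding the required contradiction and proving $\eta_p$ is injective. The main subtlety is fixing the convention for $T_p^*$ (Picard functoriality) on $V_f$ relative to the standard Hecke eigenvalue $a_p(f)$; under the alternative common normalisation $\lambda_p(f) = \chi_f(p)^{-1} a_p(f)$, the vanishing $c_f = 0$ instead gives $a_p(f) = \chi_f(p) + p$, whence $\Frob_p$ has characteristic polynomial $(X - \chi_f(p))(X - p)$ and the Weil bound is violated even more directly.
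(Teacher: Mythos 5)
Your proof is correct and follows essentially the same route as the paper: decompose the coefficient module $H^1_{\textnormal{\'et}}(X_1(N))(2)\otimes\Qp$ into eigenform components on which $\eta_p$ acts by a scalar, and rule out vanishing of that scalar via the absolute value $p^{1/2}$ of the Frobenius eigenvalues. The paper disposes of the nonvanishing in one line by observing that (in its normalisation) the scalar is $P(1)=(1-\alpha)(1-\beta)$ for the Hecke polynomial $P(x)=x^2-a_p(f)x+p\chi(p)$, whereas your choice of normalisation forces the longer root-of-unity analysis in your main branch; your closing remark with $\lambda_p(f)=\chi_f(p)^{-1}a_p(f)$ recovers the short argument.
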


\begin{proof}
For simplicity of notation, let $V=H_{\text{\'et}}^1(X_{1}(N))(1)\otimes_{\Zp}\Qp$. We have an isomorphism $V\cong\bigoplus_{f_{i}}T_{p}(A_{f_{i}})\otimes_{\Zp} \Qp$, where the $f_{i}$'s are weight $2$ newforms of level $\Gamma_{1}(N)$ and $T_p(A_{f_i})$ is the associated $p$-adic Tate-module. Note that this isomorphism is anti-Hecke equivariant, so $\eta_{p}$ acts on $T_{p}(A_{f_{i}})\otimes\Qp$ by multiplication by $1-a_{p}(f)+p\chi_{i}(p)$, where $\chi_{i}$ is the character of $f_{i}$. 

We have $1-a_{p}(f)+p\chi_{i}(p)\neq 0$ since the roots of the polynomial 
 $x^2-a_p(f)x+p\chi_{i}(p)$ have absolute value $p^{1/2}$ (\cf \cite[\S 14.10.5]{Kato_p-adic}).
Thus, $\eta_{p}$ acts injectively on $V(1)$. Since $V(1)$ is a finite dimensional vector space, $\eta_{p}$ is an isomorphism of $V(1)$. We  conclude that $\eta_{p}$ acts injectively on $H^1(\Z[\tfrac{1}{Np}], V(1))$.
\end{proof}

\begin{prop}\label{Prop11}
We have an anti-Hecke-equivariant map
\[\rho: \tilde{H}_{\Gamma_1(N)}\rightarrow H^1(\Z[\tfrac{1}{Np}], H_{\textnormal{\'et}}^1(Y_{1}(N))(2))\otimes \Qp\]
which sends $[u,v]^{*}$ to $z_{1,N,1}(u,v)$.
\end{prop}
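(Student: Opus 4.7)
The strategy is to write $\rho = \eta_p^{-1} \circ Z'$, where $Z'$ is the anti-Hecke-equivariant map from Remark \ref{Rem10} and $\eta_p = 1 - T_p^* + p\langle p \rangle^{-1}$. Since by construction $Z'([u,v]^*) = \eta_p \cdot z_{1,N,1}(u,v)$, this formally matches the prescription of the proposition. Because $Z'$ is anti-Hecke equivariant and $\eta_p$ commutes with every Hecke operator $T_\ell^*$ and diamond $\langle \ell \rangle^{\pm 1}$, both well-definedness and anti-Hecke equivariance of $\rho$ will follow once one knows that $\eta_p$ acts invertibly on the target $H^1(\Z[\tfrac{1}{Np}], H^1_{\text{\'et}}(Y_1(N))(2)) \otimes \Qp$.

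The main work is therefore to extend Lemma \ref{Prop10} (injectivity of $\eta_p$ on the cohomology of $X_1(N)$) to the open curve $Y_1(N)$. I would use the Gysin localization sequence
$$0 \to H^1_{\text{\'et}}(X_1(N))(2) \to H^1_{\text{\'et}}(Y_1(N))(2) \to V \to 0,$$
where $V$ is a Galois-stable subquotient of $\bigoplus_{c \in \mathrm{cusps}} \Zp(1)$. Taking $H^1(\Z[\tfrac{1}{Np}], -) \otimes \Qp$ and combining the resulting long exact sequence with Lemma \ref{Prop10} reduces the injectivity of $\eta_p$ to a statement on $H^1(\Z[\tfrac{1}{Np}], V) \otimes \Qp$. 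This is a concrete, finite-dimensional computation: one writes the $T_p^*$-action on cusp divisors of $X_1(N)$ and checks that, after passing to Galois cohomology (which amounts to Kummer theory on $\Z[\tfrac{1}{Np}]^\times$ in each eigencomponent), the $T_p^*$-eigenvalues never coincide with $1 + p\langle p \rangle^{-1}$, so $\eta_p$ has no kernel on the cusp piece.

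Once invertibility of $\eta_p$ on the full target is in hand, the remaining verifications are formal. Given a Manin relation $\sum n_i [u_i, v_i]^* = 0$ in $\tilde{H}_{\Gamma_1(N)}$, well-definedness of $Z'$ yields $\eta_p \sum n_i z_{1,N,1}(u_i, v_i) = Z'(\sum n_i [u_i, v_i]^*) = 0$, and injectivity of $\eta_p$ forces $\sum n_i z_{1,N,1}(u_i, v_i) = 0$. For any Hecke operator $T$, the identity $\eta_p(T^* \rho(x) - \rho(Tx)) = T^* Z'(x) - Z'(Tx) = 0$ together with injectivity yields $T^* \rho(x) = \rho(Tx)$.

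The hard part is the cusp-side injectivity analysis. The Ramanujan-type bound $|a_p(f)| \leq 2\sqrt{p}$ used in Lemma \ref{Prop10} is intrinsically cuspidal and gives no information on $V$; one must instead work out the Hecke correspondences on the cusps of $X_1(N)$ explicitly and rule out the Eisenstein eigenvalue $1 + p\langle p \rangle^{-1}$ of $T_p^*$ on each component appearing in $H^1(\Z[\tfrac{1}{Np}], V) \otimes \Qp$.
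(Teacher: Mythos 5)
Your overall architecture (set $\rho=\eta_p^{-1}\circ Z'$ and deduce well-definedness and anti-Hecke-equivariance from injectivity of $\eta_p$ on the target) has the right shape, but the step you defer as ``the hard part'' is not merely hard: it is false, and this is a genuine gap. The operator $\eta_p=1-T_p^*+p\langle p\rangle^{-1}$ is \emph{not} injective on the boundary contribution $H^1(\Z[\tfrac{1}{Np}],C(1))\otimes\Qp$, where $C=\Qp[\mathcal{C}_{\Gamma_1(N)}]^0$. Note that $\eta_p=-(T_p^*-p\langle p\rangle^{-1}-1)$ is exactly (the dual of) the Eisenstein operator at $p$, and the whole point of the Eisenstein-ideal formalism is that such operators annihilate part of the boundary: on the eigencomponent of the cusps corresponding to a weight-$2$ Eisenstein series with nebentypus $\chi$ and $T_p$-eigenvalue $\chi(p)+p$, the dual operator $T_p^*=\langle p\rangle^{-1}T_p$ acts by $1+p\chi(p)^{-1}$, which is precisely the eigenvalue of $1+p\langle p\rangle^{-1}$, so $\eta_p$ kills that entire component. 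These components contribute nontrivially to the target, since for even $\chi$ the group $H^1(\Z[\tfrac{1}{Np}],\Qp(1)(\chi))$ is nonzero (by Kummer theory it contains the $\chi$-part of the $\{N,p\}$-units of $\Q(\zeta_N)^+$). So no explicit computation of the Hecke correspondences on cusps can ``rule out'' the eigenvalue $1+p\langle p\rangle^{-1}$; it genuinely occurs, and your formal deductions of well-definedness and equivariance, which rely on injectivity of $\eta_p$ on the full target, collapse.

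The paper's proof avoids inverting $\eta_p$ on the boundary altogether. Given two expressions $\gamma=\sum_i a_i[u_i,v_i]^*=\sum_j b_j[u_j,v_j]^*$, it sets $A=\sum_i a_i z_{1,N,1}(u_i,v_i)$ and $B=\sum_j b_j z_{1,N,1}(u_j,v_j)$ and invokes the residue computation of Fukaya--Kato for the Beilinson--Kato elements (\cite[Theorem 3.3.9 (iii)]{Fukaya_Kato}) to show that $t(A-B)=0$, where $t$ is the map to $H^1(\Z[\tfrac{1}{Np}],C(1))$; combined with $H^0(\Z[\tfrac{1}{Np}],C(1))=0$ (which gives left-exactness of the localization sequence), this places $A-B$ inside the cuspidal subspace $H^1(\Z[\tfrac{1}{Np}],H^1_{\textnormal{\'et}}(X_1(N))(2)\otimes\Qp)$, where Lemma \ref{Prop10} does apply, and $\eta_p(A-B)=0$ then forces $A=B$. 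The missing ingredient in your argument is therefore this boundary/residue computation for the Beilinson--Kato elements: it is what certifies that the ambiguity lands in the cuspidal part, and it cannot be replaced by injectivity of $\eta_p$ on the cusp piece.
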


\begin{proof}
We have the following exact sequence 
\[0\rightarrow H_{\textnormal{\'et}}^1(X_{1}(N))(2)\otimes\Qp\rightarrow H_{\textnormal{\'et}}^1(Y_{1}(N))(2)\otimes\Qp\rightarrow C(1)\rightarrow 0,\]
where $C=\Qp[\mathcal{C}_{\Gamma_1(N)}]^0$. Note that $H^0(\Z[\tfrac{1}{Np}], C(1))=0$ since $\Gal(\overline{\Q}/\Q(\zeta_{N}))$ acts trivially on $C$. We get the following exact sequence in Galois cohomology
\begin{align}\label{Boundary_computation}
&0 \rightarrow H^1(\Z[\tfrac{1}{Np}],H_{\textnormal{\'et}}^1(X_{1}(N))(2)\otimes\Qp)\rightarrow  H^1(\Z[\tfrac{1}{Np}],H_{\textnormal{\'et}}^1(Y_{1}(N))(2)\otimes\Qp)\\
&\rightarrow H^1(\Z[\tfrac{1}{Np}], C(1))\nonumber.
\end{align}
The map $H^1(\Z[\tfrac{1}{Np}],H_{\textnormal{\'et}}^1(Y_{1}(N))(2)\otimes\Qp) \rightarrow H^1(\Z[\tfrac{1}{Np}], C(1))$ is denoted by $t$. Let $\gamma\in \tilde{H}_{\Gamma_1(N)}$. Suppose we have two different expressions for $\gamma$:
\begin{equation*}
\gamma=\sum_{i}a_{i}[u_i,v_i]^{*}=\sum_{j}b_{j}[u_{j},v_{j}]^{*},
\end{equation*}
where $a_{i}, b_{j}$ are elements in the Hecke algebra.
Let $A=\sum_{i}a_{i}z_{1,N,1}(u_{i},v_{i})$ and $B=\sum_{j}b_{j}z_{1,N,1}(u_{j},v_{j})$. By the boundary computation in \cite[Theorem 3.3.9 (iii)]{Fukaya_Kato}, we have $t(A-B)=0$. By (\ref{Boundary_computation}), we have $A-B\in H^1(\Z[\tfrac{1}{Np}],H_{\textnormal{\'et}}^1(X_{1}(N))(2)\otimes\Qp)$. By Remark \ref{Rem10}, we know that $\eta_{p}(A-B)=0$. From Lemma \ref{Prop10}, we know that $\eta_{p}$ is injective on $H^1(\Z[\tfrac{1}{Np}],H_{\textnormal{\'et}}^1(X_{1}(N))(2)\otimes\Qp)$. So, we can conclude that $A=B$.
\end{proof}

\begin{lem}\label{Prop12}
The group $H^1(\Z[\tfrac{1}{Np}],H_{\textnormal{\'et}}^1(Y_{1}(N))(2))$ is $\Zp$-torsion-free.
\end{lem}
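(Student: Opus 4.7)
The plan is to prove $\Zp$-torsion-freeness by a Bockstein argument. Let $M := H^1_{\textnormal{\'et}}(Y_1(N))(2)$ and denote by $H^i(-)$ the étale cohomology of $\Spec\Z[\tfrac{1}{Np}]$. The short exact sequence of Galois modules
$$0 \longrightarrow M \xrightarrow{\;p\;} M \longrightarrow M/p \longrightarrow 0$$
yields a long exact sequence identifying $H^1(M)[p]$ with a quotient of $H^0(M/p)$. Hence it suffices to show $H^0(M/p) = 0$; the vanishing of $H^1(M)[p^n]$ for every $n\geq 1$ then follows by induction using the short exact sequences $0 \to M/p \to M/p^n \to M/p^{n-1} \to 0$, giving $\Zp$-torsion-freeness.

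To compute $H^0(M/p)$, I would reduce the defining exact sequence
$0 \to H^1_{\textnormal{\'et}}(X_1(N))(2) \to M \to C(1) \to 0$ modulo $p$. Since both outer terms are $\Zp$-torsion-free, the snake lemma produces the short exact sequence
$$0 \longrightarrow H^1_{\textnormal{\'et}}(X_1(N), \mathbf{F}_p(2)) \longrightarrow M/p \longrightarrow C(1)/p \longrightarrow 0,$$
and it suffices to show that the $G_{\Z[1/Np]}$-invariants of each endpoint vanish.

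For $C(1)/p$: the argument parallels the one for $H^0(\Z[\tfrac{1}{Np}], C(1))=0$ used in the proof of Proposition \ref{Prop11}. The subgroup $\Gal(\bar\Q/\Q(\zeta_N))$ acts trivially on $C$ (the cusps of $X_1(N)$ are defined over $\Q(\zeta_N)$) but through the mod-$p$ cyclotomic character on $\mathbf{F}_p(1)$. Because $p\nmid N$ and $p\geq 5$, the fields $\Q(\zeta_N)$ and $\Q(\zeta_p)$ are linearly disjoint, and the restriction of $\chi_p$ to this subgroup surjects onto $\mathbf{F}_p^{\times}$; choosing $\sigma$ with $\chi_p(\sigma)\neq 1$ forces any $G$-invariant to be zero.

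The hard part is the vanishing of $H^1_{\textnormal{\'et}}(X_1(N), \mathbf{F}_p(2))^{G}$. Over $\Qp$-coefficients, this follows from purity: the representation is pure of weight $-3$. Mod $p$, a nonzero invariant would, via the Eichler--Shimura congruence $\Frob_\ell^2 - \ell^2 T_\ell^{\,*}\Frob_\ell + \ell^5 \langle\ell\rangle^{*} = 0$ on the twisted space (combined with $\Frob_\ell v = v$), produce a common Hecke eigenvector with the Eisenstein-type eigenvalues $T_\ell^{\,*} \equiv \ell^{-2} + \ell^3 \langle\ell\rangle^{*-1}\pmod p$ for every $\ell\nmid Np$. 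Ruling such a congruence out in the range $p\geq 5$, $p\mid N-1$, $p\nmid N$ is the main obstacle of the proof. The cleanest approach is to invoke the analogous torsion-freeness established by Fukaya--Kato in the $p\mid\text{level}$ setting \cite[\S 3.3]{Fukaya_Kato}, whose proof (based on the classification of residual Eisenstein congruences and Mazur's Eisenstein ideal analysis extended to $\Gamma_1(N)$) transports to our setting essentially verbatim.
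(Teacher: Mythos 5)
Your overall skeleton coincides with the paper's: the Bockstein reduction from torsion-freeness of $H^1$ to vanishing of $H^0$ mod $p$, the splitting along $0\to H^1_{\textnormal{\'et}}(X_1(N))(2)\to M\to C(1)\to 0$, and the treatment of the boundary piece via the mismatch between the trivial action of $\Gal(\overline{\Q}/\Q(\zeta_N))$ on the cusps and its nontrivial action on $\mu_p$ are all exactly what the paper does. The divergence, and the genuine gap, is in the step you yourself flag as ``the main obstacle'': the vanishing of $H^0(\Z[\tfrac{1}{Np}], H^1_{\textnormal{\'et}}(X_1(N))(2)\otimes\mathbf{F}_p)$. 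You propose a global argument (Eichler--Shimura plus ruling out a residual Eisenstein eigensystem) and, unable to carry it out, defer to Fukaya--Kato ``essentially verbatim.'' This does not close the proof: Fukaya--Kato work with $p$ dividing the level and with ordinary parts, so their torsion-freeness statement is not formally the one needed here, and the transport is precisely what would have to be checked. Moreover the global route is delicate in this paper's setting, since $p\mid N-1$ means Eisenstein congruences genuinely occur in $H^1_{\textnormal{\'et}}(X_1(N),\mathbf{F}_p)$; what saves the statement is the twist by $2$, and your sketch does not actually rule out the twisted eigensystem $\chi_p^{2}$ or $\chi_p^{3}$ becoming trivial, nor does it address non-semisimple extensions.

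The paper avoids all of this by a purely local argument at $p$: since $p\nmid N$, the $\Gal(\overline{\Q}_p/\Qp)$-representation $H^1_{\textnormal{\'et}}(X_1(N))(2)\otimes\Qp$ is crystalline with Hodge--Tate weights avoiding $0$ in the Fontaine--Laffaille range (here $p\geq 5$ is used), whence already $H^0(\Qp, H^1_{\textnormal{\'et}}(X_1(N))(2)\otimes\mathbf{F}_p)=0$ by the integral comparison of Fontaine--Laffaille theory (the paper cites \cite[Section 3.2]{FM}). No Hecke-theoretic or Eisenstein-ideal input is needed. If you want to salvage your write-up, replace your global step by this local vanishing; as it stands, the key step of your proof is asserted rather than proved.
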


\begin{proof}
Consider the exact sequence
\[0\rightarrow H_{\text{\'et}}^1(X_{1}(N))\xrightarrow{i} H_{\text{\'et}}^1(Y_{1}(N))\rightarrow C\rightarrow 0,\]
where $C$ is the cokernel of $i$. In order to prove that $H^1(\Z[\tfrac{1}{Np}],H_{\text{\'et}}^1(Y_{1}(N))(2))$ is $\Zp$-torsion-free, it suffices to prove that both $H^1(\Z[\tfrac{1}{Np}],H_{\text{\'et}}^1(X_{1}(N))(2))$ and $H^1(\Z[\tfrac{1}{Np}],C(2))$ are $\Zp$-torsion-free. From the sequence
\[0\rightarrow \Zp(2)\xrightarrow{p} \Zp(2)\rightarrow \mu_{p}^{\otimes 2}\rightarrow 0,\]
it suffices to prove that 
\begin{enumerate}
    \item\label{item_H_0_trivial_i} $H^0(\Z[\tfrac{1}{Np}],H_{\text{\'et}}^1(X_{1}(N))(2)\otimes \Fp)=0$,
    \item\label{item_H_0_trivial_ii} $H^0(\Z[\tfrac{1}{Np}],C(2)\otimes \Fp)=0$.
\end{enumerate}
Note that the $\Gal(\overline{\Q}_p/\Qp)$-representation $H_{\text{\'et}}^1(X_{1}(N))(2)\otimes\Qp$ is crystalline. Then (\ref{item_H_0_trivial_i}) is deduced from the statement that $H^0(\Qp,H_{\text{\'et}}^1(X_{1}(N))(2)\otimes \Fp)=0$. For the details, see \cite[Section 3.2]{FM}. For (\ref{item_H_0_trivial_ii}), it is true because $\Gal(\overline{\Q}/\Q(\zeta_{N}))$ acts trivially on $C(1)$.
\end{proof}
\begin{corr}\label{Cor3}
The image of the map $\rho$ is contained in $H^1(\Z[\tfrac{1}{Np}], H_{\textnormal{\'et}}^1(Y_{1}(N))(2))$, and the following compositum is annihilated by the Hecke operators $T_{\ell}-\ell\cdot \langle \ell \rangle -1$ for primes $\ell \neq N$ and by $T_{N}-1$:
\[\tilde{\varpi}':\tilde{H}_{\Gamma_1(N)}\xrightarrow{\tilde{\varpi}}H^2(\Z[\zeta_{N}, \tfrac{1}{Np}],\Zp(2))\rightarrow H^2(\Z[\zeta_{N}, \tfrac{1}{Np}],\Zp(2))/H^2(\Z[\tfrac{1}{Np}],\Zp(2)).\]
\end{corr}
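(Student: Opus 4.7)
The plan is to combine Proposition \ref{Prop11}, Lemma \ref{Prop12}, and Theorem \ref{Fukayakato1}; essentially this corollary unpacks the commutative diagram stated in the remark preceding Proposition \ref{Prop11}. For the integrality claim, each generator $[u,v]^*$ of $\tilde{H}_{\Gamma_1(N)}$ maps under $\rho$ to $z_{1,N,1}(u,v)$, which arises as the image under $g$ of the integral Beilinson--Kato element $g_{0,u/N}\cup g_{0,v/N}$. Lemma \ref{Prop12} tells us that $H^1(\Z[\tfrac{1}{Np}], H^1_{\textnormal{\'et}}(Y_1(N))(2))$ is $\Zp$-torsion-free, hence embeds into its rationalization; combined with the well-definedness of $\rho$ with values in the rationalization (Proposition \ref{Prop11}), this forces $\rho$ to factor through the integral module.

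For the Hecke annihilation of $\tilde{\varpi}'$, I would first verify by functoriality that $\infty(0,1)$ sends the subgroup $H^2(\Z[\tfrac{1}{Np}],\Zp(2))\subset H^2_{\textnormal{\'et}}(\mathcal{Y}_1(N)\otimes\Z[1/p],\Zp(2))$ into $H^2(\Z[\tfrac{1}{Np}],\Zp(2))\subset H^2(\Z[\zeta_N,\tfrac{1}{Np}],\Zp(2))$; hence $\infty(0,1)$ descends to a well-defined map
$$\overline{\infty(0,1)}\colon H^1(\Z[\tfrac{1}{Np}], H^1_{\textnormal{\'et}}(Y_1(N))(2))\longrightarrow H^2(\Z[\zeta_N,\tfrac{1}{Np}],\Zp(2))/H^2(\Z[\tfrac{1}{Np}],\Zp(2)).$$
The formula $\tilde{\varpi}([u,v]^*)=\infty(0,1)(g_{0,u/N}\cup g_{0,v/N})$ of Theorem \ref{Fukayakato1} then yields the factorization $\tilde{\varpi}'=\overline{\infty(0,1)}\circ\rho$.

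Given $x=\sum_i\lambda_i[u_i,v_i]^*\in\tilde{H}_{\Gamma_1(N)}$ and a prime $\ell\neq N$, the anti-Hecke equivariance of $\rho$ (Proposition \ref{Prop11}), using that the Albanese diamond $\langle\ell\rangle$ is dual to the Picard diamond $\langle\ell^{-1}\rangle$, gives
$$\rho\bigl((T_\ell-\ell\langle\ell\rangle-1)x\bigr)=(T_\ell^*-\ell\langle\ell^{-1}\rangle-1)\rho(x)=g\bigl((T_\ell^*-\ell\langle\ell^{-1}\rangle-1)y_{\textnormal{BK}}\bigr),$$
where $y_{\textnormal{BK}}:=\sum_i \lambda_i\, g_{0,u_i/N}\cup g_{0,v_i/N}$ is a specific $\Zp$-combination of Beilinson--Kato elements lifting $\rho(x)$. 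Since the Hecke action preserves the span of Beilinson--Kato elements via the explicit formulas of \cite{Fukaya_Kato}, the element $(T_\ell^*-\ell\langle\ell^{-1}\rangle-1)y_{\textnormal{BK}}$ is again a $\Zp$-combination of Beilinson--Kato elements, so Theorem \ref{Fukayakato1} yields $\infty(0,1)\bigl((T_\ell^*-\ell\langle\ell^{-1}\rangle-1)y_{\textnormal{BK}}\bigr)=0$. Consequently $\tilde{\varpi}'\bigl((T_\ell-\ell\langle\ell\rangle-1)x\bigr)=0$, and the same argument with $T_N^*-1$ handles the operator $T_N-1$.

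The main obstacle is bookkeeping: keeping track of the Hecke duality conventions (Albanese versus Picard functoriality, inversion of diamond operators) and verifying the functoriality of $\infty(0,1)$ that makes $\overline{\infty(0,1)}$ well-defined on the quotient. Once these compatibilities are set, the result follows formally from Proposition \ref{Prop11}, Lemma \ref{Prop12}, and Theorem \ref{Fukayakato1}.
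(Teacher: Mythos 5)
Your proposal is correct and follows the same route the paper intends: the paper states this corollary without a separate proof, relying on exactly the ingredients you assemble --- integrality of the $z_{1,N,1}(u,v)$ plus the torsion-freeness of Lemma \ref{Prop12} for the first claim, and the commutative diagram relating $\tilde{\varpi}$, $\rho$ and $\infty(0,1)$ together with the anti-Hecke equivariance of Proposition \ref{Prop11} and Theorem \ref{Fukayakato1} for the second. Your only superfluous step is invoking that $(T_\ell^*-\ell\langle\ell^{-1}\rangle-1)y_{\mathrm{BK}}$ is again a combination of Beilinson--Kato elements; Theorem \ref{Fukayakato1} already asserts directly that $\infty(0,1)$ kills such elements, so this intermediate claim is not needed.
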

\begin{lem}
The image of $H^2(\Z[\tfrac{1}{Np}],\Zp(2))$ in $H^2(\Z[\zeta_{N}, \tfrac{1}{Np}],\Zp(2))$ is $\nu.H^2(\Z[\zeta_{N}, \tfrac{1}{Np}],\Zp(2))$ (recall from the introduction that $\nu = \sum_{g \in \Gal(\Q(\zeta_N)/\Q)} [g]$).
\end{lem}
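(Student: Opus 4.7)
The plan is to establish both inclusions $\nu \cdot \mathcal{K} \subseteq \mathrm{Image}(\mathrm{res})$ and $\mathrm{Image}(\mathrm{res}) \subseteq \nu \cdot \mathcal{K}$, where $\mathrm{res}: H^2(\Z[\tfrac{1}{Np}], \Zp(2)) \to \mathcal{K}$ is the restriction map induced by the embedding $\Z[\tfrac{1}{Np}] \hookrightarrow \Z[\zeta_N, \tfrac{1}{Np}]$ and $\mathrm{cor}: \mathcal{K} \to H^2(\Z[\tfrac{1}{Np}], \Zp(2))$ is the corestriction (norm) map. The crucial input is the standard identity $\mathrm{res} \circ \mathrm{cor} = \nu$ on $\mathcal{K}$, valid since $\Gal(\Q(\zeta_N)/\Q)$ is a quotient of the absolute Galois group acting normally.

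The inclusion $\nu \cdot \mathcal{K} \subseteq \mathrm{Image}(\mathrm{res})$ is then immediate: for any $x \in \mathcal{K}$, $\nu \cdot x = \mathrm{res}(\mathrm{cor}(x)) \in \mathrm{Image}(\mathrm{res})$. For the reverse inclusion, it suffices to prove that $\mathrm{cor}$ is surjective: indeed, writing $y = \mathrm{cor}(m)$ we get $\mathrm{res}(y) = \mathrm{res}(\mathrm{cor}(m)) = \nu \cdot m \in \nu \cdot \mathcal{K}$.

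To prove surjectivity of $\mathrm{cor}$, I would use its compatibility with tame residues at the prime above $N$. Since $N$ is totally ramified in $\Q(\zeta_N)/\Q$, there is a unique prime $\mathfrak{P}$ of $\Z[\zeta_N, \tfrac{1}{Np}]$ above $N$, with residue field $\mathbb{F}_N$, so the norm on residue fields is the identity. This yields the commutative square
\[
\begin{array}{ccc}
\mathcal{K} & \xrightarrow{\partial_\mathfrak{P}} & (\Z/N\Z)^\times \otimes_\Z \Zp \\
\mathrm{cor}\downarrow & & \downarrow \mathrm{id} \\
H^2(\Z[\tfrac{1}{Np}], \Zp(2)) & \xrightarrow{\partial_N} & (\Z/N\Z)^\times \otimes_\Z \Zp.
\end{array}
\]
The top horizontal map is surjective by Proposition \ref{Prop_K_theory}(\ref{Prop_K_theory_augmentation}). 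The bottom map is an isomorphism: via the Chern class identification of Remark \ref{comparison_K_etale}, it is the residue in the localization sequence $K_2(\Z) \to K_2(\Z[\tfrac{1}{Np}]) \to \mathbb{F}_N^\times \oplus \mathbb{F}_p^\times \to K_1(\Z)$, and tensoring with $\Zp$ (with $p \ge 5$) kills the surrounding terms $K_2(\Z) = \Z/2\Z$, $\mathbb{F}_p^\times$, and $K_1(\Z) = \{\pm 1\}$, leaving $K_2(\Z[\tfrac{1}{Np}]) \otimes \Zp \xrightarrow{\sim} \mathbb{F}_N^\times \otimes \Zp$, both sides cyclic of order $p^t$. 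Surjectivity of $\mathrm{cor}$ follows from a two-out-of-three argument in the square.

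The main obstacle I expect is purely bookkeeping: verifying that corestriction on étale cohomology matches, under the Chern class isomorphism of Remark \ref{comparison_K_etale}, the transfer map on algebraic $K$-theory, and that the residue map $\mathcal{K} \to (\Z/N\Z)^\times \otimes \Zp$ of Proposition \ref{Prop_K_theory}(\ref{Prop_K_theory_augmentation}) coincides with the tame symbol at $\mathfrak{P}$. Both compatibilities are classical, so no substantial difficulty is anticipated beyond unwinding the identifications.
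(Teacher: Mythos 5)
Your proposal is correct and follows the same skeleton as the paper's proof: both arguments rest on the identity $\mathrm{res}\circ\mathrm{cor}=\nu$ (the paper cites \cite[Corollary 1.5.7]{Neukirch_Galois} for this) together with the surjectivity of the corestriction $\mathrm{cor}:\mathcal{K}\to H^2(\Z[\tfrac{1}{Np}],\Zp(2))$, which immediately gives that the image of $\mathrm{res}$ equals the image of multiplication by $\nu$. The only divergence is in how surjectivity of $\mathrm{cor}$ is obtained: the paper simply cites \cite[Propositions 8.3.18 and 3.3.11]{Neukirch_Galois}, whereas you give a self-contained argument via the commuting square of tame residues at the totally ramified prime above $N$, using that the residue symbol on $\mathcal{K}$ is surjective (indeed $\langle 1-\zeta_N^a,1-\zeta_N\rangle\mapsto a\otimes 1$) and that the bottom residue map is an isomorphism by the localization sequence after tensoring with the flat module $\Zp$. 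Your route costs a little more bookkeeping (compatibility of transfer with tame symbols, and of the Chern class with the $K$-theoretic transfer, both standard) but avoids appealing to the general duality machinery; both arguments are valid.
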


\begin{proof}
By \cite[Propositions 8.3.18 and 3.3.11]{Neukirch_Galois}, the corestriction map  $H^2(\Z[\zeta_{N}, \tfrac{1}{Np}],\Zp(2)) \xrightarrow{\textnormal{cor}}H^2(\Z[\tfrac{1}{Np}],\Zp(2))$ is surjective. Thus, the image of $H^2(\Z[\tfrac{1}{Np}],\Zp(2))$ in $H^2(\Z[\zeta_{N}, \tfrac{1}{Np}],\Zp(2))$ is the image of the compositum
\[H^2(\Z[\zeta_{N}, \tfrac{1}{Np}],\Zp(2)) \xrightarrow{\textnormal{cor}}H^2(\Z[\tfrac{1}{Np}],\Zp(2))\xrightarrow{\textnormal{res}}H^2(\Z[\zeta_{N}, \tfrac{1}{Np}],\Zp(2))\]
where $\text{res}$ is the restriction. 
By \cite[Corollary 1.5.7]{Neukirch_Galois}, the endomorphism $\text{res} \circ \text{cor}$ of $H^2(\Z[\zeta_{N}, \tfrac{1}{Np}],\Zp(2))$ is the multiplication by $\nu$. This concludes the proof of the lemma.
\end{proof}

To summarize, we have proved in this section the following result, which implies Theorem \ref{thm_Sharifi_mod_norms} (by restriction to $(\tilde{H}_{\Gamma_1(N)})_+$).
	
\begin{thm}
The following compositum is annihilated by the Hecke operators $T_{\ell}-\ell\cdot \langle \ell \rangle -1$ for primes $\ell \neq N$ and by $T_{N}-1$.
\[\tilde{H}_{\Gamma_1(N)}\xrightarrow{\tilde{\varpi}} H^2(\Z[\zeta_{N}, \tfrac{1}{Np}],\Zp(2))\rightarrow H^2(\Z[\zeta_{N}, \tfrac{1}{Np}],\Zp(2))/\nu.H^2(\Z[\zeta_{N}, \tfrac{1}{Np}],\Zp(2)).\]
\end{thm}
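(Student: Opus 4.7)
The plan is to deduce the theorem directly from Corollary \ref{Cor3} together with the immediately preceding Lemma. Corollary \ref{Cor3} already establishes the desired annihilation by $T_\ell - \ell \langle \ell \rangle - 1$ ($\ell \neq N$) and $T_N - 1$, but phrased with quotient by the image of the restriction map $H^2(\Z[\tfrac{1}{Np}], \Zp(2)) \to H^2(\Z[\zeta_N, \tfrac{1}{Np}], \Zp(2))$. First I would invoke the Lemma to rewrite that image as $\nu \cdot H^2(\Z[\zeta_N, \tfrac{1}{Np}], \Zp(2))$, using the classical identity $\mathrm{res} \circ \mathrm{cor} = \nu$ together with the surjectivity of corestriction in degree $2$ for the abelian extension $\Q(\zeta_N)/\Q$. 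After this identification, the two quotient maps coincide, the map $\tilde{\varpi}'$ of Corollary \ref{Cor3} becomes the composite appearing in the statement, and the conclusion is immediate.

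The substantive work already resides in Corollary \ref{Cor3}. Three ingredients feed into it, worth naming here since they are the real content. (a) Fukaya--Kato's identity $\tilde{\varpi}([u,v]^*) = \infty(0,1)(g_{0,u/N} \cup g_{0,v/N})$ of Theorem \ref{Fukayakato1}, which factors $\tilde{\varpi}$ through the $\infty$-evaluation applied to Beilinson--Kato elements and carries with it the Eisenstein annihilation of $\infty(0,1)$ on that image. (b) Proposition \ref{Prop11}, which constructs the anti-Hecke-equivariant regulator $\rho : \tilde{H}_{\Gamma_1(N)} \to H^1(\Z[\tfrac{1}{Np}], H^1_{\textnormal{\'et}}(Y_{1}(N))(2)) \otimes \Qp$ by using the full Iwasawa-theoretic tower $z_{1,N,p^\infty}$ of Theorem \ref{Fukayakato2} together with the invertibility of $\eta_p = 1 - T_p^* + p\langle p\rangle^{-1}$ supplied by Lemma \ref{Prop10} (the Ramanujan--Petersson bound). (c) Lemma \ref{Prop12} on the $\Zp$-torsion-freeness of the target, which promotes $\rho$ from a map valued in the rationalisation to an integral one.

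The hard part of the whole strategy lies upstream, in Proposition \ref{Prop11}: Beilinson--Kato cup products naturally provide only a rational regulator, and extracting an integral map requires working with the full Iwasawa tower, inverting $\eta_p$, and then controlling integrality via the torsion-vanishing in Lemma \ref{Prop12}. Once that is done, the final theorem is a purely formal consequence of Corollary \ref{Cor3} and the image-identification Lemma; no further input is needed. The restriction to $(\tilde{H}_{\Gamma_1(N)})_+$ then yields Theorem \ref{thm_Sharifi_mod_norms}.
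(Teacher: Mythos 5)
Your proposal is correct and follows exactly the paper's own route: the theorem is stated there as a summary of the section, obtained by combining Corollary \ref{Cor3} with the preceding Lemma identifying the image of $H^2(\Z[\tfrac{1}{Np}],\Zp(2))$ with $\nu\cdot H^2(\Z[\zeta_N,\tfrac{1}{Np}],\Zp(2))$ via $\mathrm{res}\circ\mathrm{cor}=\nu$ and surjectivity of corestriction. Your attribution of the substantive content to Theorem \ref{Fukayakato1}, Proposition \ref{Prop11} and Lemma \ref{Prop12} also matches where the paper places the real work.
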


\section{Proof of Theorems \ref{thm_conj_2} and \ref{I/I^2_modulo_p}}

In this last section, we explain how to deduce Theorems \ref{thm_conj_2} and \ref{I/I^2_modulo_p} from Theorem \ref{thm_Sharifi_mod_norms}. This is similar to the proof of Theorem \ref{thm_conj}. 

Fix $s$ such that $1 \leq s \leq t$. Let $\overline{\mathcal{K}} = \mathcal{K} \otimes_{\Zp} \Z/p^s\Z$ and $\overline{H}_+=H_+\otimes_{\Zp} \Z/p^s\Z$. Let $\log : (\Z/N\Z)^{\times} \rightarrow \Z/p^s\Z$ be a surjective group homomorphism. Let $1 \leq v \leq s$ be the $p$-adic valuation of the \textit{Merel number} $\log(\prod_{k=1}^{\frac{N-1}{2}}k^k)=\sum_{k=1}^{\frac{N-1}{2}} k\cdot \log(k) \in \Z/p^s\Z$. Recall the following result, essentially due to Merel.

\begin{thm}[Merel]\label{Thm_structure_I_H_bar}
The group $I\cdot \overline{H}_+/I^2\cdot \overline{H}_+$ is isomorphic to $\Z/p^v\Z$. Thus, we have a canonical group isomorphism $$(I\cdot H_+/I^2\cdot H_+)\otimes_{\Zp} \Z/p^v\Z \xrightarrow{\sim} I\cdot \overline{H}_+/I^2\cdot \overline{H}_+ \text{ .}$$ 
\end{thm}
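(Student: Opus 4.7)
The plan is to pass to the completion at the unique Eisenstein maximal ideal and reduce the statement to a direct calculation in an explicit Artinian quotient of $\mathbb{T}_\mathfrak{m}$, using Merel's computation of the linear coefficient of a uniformizer.

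First, let $\mathfrak{m}$ denote the Eisenstein maximal ideal of $\mathbb{T}$ containing $I$, and let $\mathbb{T}_\mathfrak{m}$ be its completion. Since $p>2$, the $\mathbb{T}_\mathfrak{m}$-module $H_{+,\mathfrak{m}}$ is free of rank one by \cite[Corollary 15.2 and Proposition 16.6]{Mazur_Eisenstein}, as already used in the proof of Theorem \ref{odd_modSymb_structure_H_1} (\ref{odd_modSymb_structure_H_1_iii}). Both sides of the desired isomorphism are supported at $\mathfrak{m}$, so after fixing an isomorphism $H_{+,\mathfrak{m}}\simeq \mathbb{T}_\mathfrak{m}$ we may identify $I\cdot H_+/I^2\cdot H_+$ with $I_\mathfrak{m}/I_\mathfrak{m}^2$ and $I\cdot \overline{H}_+/I^2\cdot \overline{H}_+$ with $(I_\mathfrak{m}+p^s\mathbb{T}_\mathfrak{m})/(I_\mathfrak{m}^2+p^s\mathbb{T}_\mathfrak{m})$, where $I_\mathfrak{m}$ denotes the image of $I$.

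Next, I would invoke Merel's pairing formula from \cite{Merel_accouplement}, conveniently reformulated through \cite[Corollary 1.4.2 and Proposition 1.5.3]{WWE}, to obtain an explicit presentation
\[
\mathbb{T}_\mathfrak{m}\;\simeq\; \Zp[[X]]/(f(X))
\]
with $I_\mathfrak{m}=(X)$, $f(0) = u\cdot p^t$ for a unit $u\in \Zp^\times$ (recovering Mazur's isomorphism $\mathbb{T}_\mathfrak{m}/I_\mathfrak{m}\simeq \Z/p^t\Z$), and linear coefficient $f'(0) = w\cdot M$ with $w\in\Zp^\times$ a unit and $M$ the integer $\sum_{k=1}^{(N-1)/2}k\log(k)$ viewed in $\Zp$. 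Reducing modulo $p^s$ then identifies $v_p(f'(0)\bmod p^s)$ with the integer $v$ of the statement.

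With this presentation at hand, the whole computation reduces to explicit manipulation in the finite ring $R:=\Zp[[X]]/(f,X^2,p^s)$. Since $s\leq t$, the constant term $u p^t$ of $f$ vanishes modulo $p^s$, so the relation $f\equiv 0$ in $R$ becomes $w p^{v}X\equiv 0$, i.e.\ $p^{v}X=0$. Hence, as an additive group,
\[
R\;\simeq\; (\Z/p^s\Z)\;\oplus\;(\Z/p^v\Z)\cdot X,
\]
and the image of $I_\mathfrak{m}+p^s\mathbb{T}_\mathfrak{m}$ modulo $I_\mathfrak{m}^2+p^s\mathbb{T}_\mathfrak{m}$ is $(\Z/p^v\Z)\cdot X\simeq \Z/p^v\Z$, proving the first assertion. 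For the canonical isomorphism, the analogous calculation in $\mathbb{T}_\mathfrak{m}/X^2$ (without killing $p^s$) yields $I_\mathfrak{m}/I_\mathfrak{m}^2\simeq \Z/p^t\Z$ generated by the class of $X$, and since $v\leq t$, the reduction $H_+\to\overline{H}_+$ induces the tautological surjection $\Z/p^t\Z\twoheadrightarrow\Z/p^v\Z$, which factors through $(I\cdot H_+/I^2\cdot H_+)\otimes_{\Zp}\Z/p^v\Z$ and is an isomorphism there.

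The main obstacle---and the reason the result is attributed to Merel---is not the ring-theoretic step but rather the identification of $v_p(f'(0))$ with $v_p(M)$; this is the substantive content of the pairing computation in \cite{Merel_accouplement} and its refinement in \cite{WWE}, and any independent proof would have to redo that non-trivial computation of the second higher Eisenstein element.
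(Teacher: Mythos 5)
Your argument is correct, and it reaches the same conclusion by a more explicit route than the paper. The paper's proof is a two-line citation: in the formalism of higher Eisenstein elements, the group $I\cdot \overline{H}_+/I^2\cdot \overline{H}_+$ has order $p^{n(1)}$, and $n(1)=v$ is exactly Merel's theorem as repackaged in \cite[Corollary 2.5 and Theorem 6.4]{Lecouturier_higher_Eisenstein}. You instead unpack what that formalism encodes: since $I_{\mathfrak m}$ is principal and $H_{+,\mathfrak m}$ is free of rank one (both facts the paper itself uses, via \cite[Proposition 16.6, Theorem 18.10]{Mazur_Eisenstein}), everything reduces to the Artinian ring $\Zp[[X]]/(f,X^2,p^s)$, and since $v_p(f(0))=t\geq s$ the answer is governed entirely by $v_p(f'(0)\bmod p^s)$. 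That reduction is sound, including the reduction of the ``canonical isomorphism'' clause to the tautological surjection $\Z/p^t\Z\twoheadrightarrow\Z/p^v\Z$ of cyclic groups. The only substantive input is then the identity $v_p(f'(0)\bmod p^s)=v$, which you correctly flag as the content of Merel's computation (and of its reformulation in \cite{WWE}); this is precisely the same external input the paper invokes, just expressed as a statement about the linear coefficient of the presentation of $\T_{\mathfrak m}$ rather than about $n(1)$. Two small points of care: the Merel number $\sum_k k\log(k)$ is only defined modulo $p^s$ (or $p^t$), so the clean statement is a congruence $f'(0)\equiv w\cdot M$ rather than an equality in $\Zp$ --- you do handle this correctly when truncating at $p^s$; and one should note (as you implicitly do) that the quotient $(I\cdot H_++p^sH_+)/(I^2\cdot H_++p^sH_+)$ is killed by $I+(p^s)$ and hence supported only at $\mathfrak m$, which justifies passing to the completion. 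What your approach buys is a self-contained algebraic skeleton that makes transparent exactly which arithmetic quantity must be computed; what the paper's citation buys is brevity and consistency with the filtration language used elsewhere (e.g.\ the remark identifying Theorem \ref{I/I^2_modulo_p} with the second higher Eisenstein element).
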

\begin{proof}
In the notation of \cite[Theorem 2.1]{Lecouturier_higher_Eisenstein}, this follows from the fact that $n(1)=v$, which is a direct consequence of \cite[Th\'eor\`eme 1]{Merel_accouplement} by \cite[Corollary 2.5 and Theorem 6.4]{Lecouturier_higher_Eisenstein}.
\end{proof}

On the $K$-theoretic side, we have the following result.
\begin{lem}\label{Lemma_structure_J_K_bar}
The group $J\cdot \overline{\mathcal{K}}/J^2\cdot \overline{\mathcal{K}}$ is isomorphic to $\Z/p^v\Z$. Thus, we have a canonical group isomorphism $$(J\cdot \mathcal{K}/J^2\cdot\mathcal{K})\otimes_{\Zp} \Z/p^v\Z \xrightarrow{\sim} J\cdot \overline{\mathcal{K}}/J^2\cdot \overline{\mathcal{K}} \text{ .}$$ 
\end{lem}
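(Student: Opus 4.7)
The plan is to compute $J\overline{\mathcal{K}}/J^2\overline{\mathcal{K}}$ directly via the structure of $\mathcal{K}^{(p)}$, then identify the relevant invariant with Merel's number up to a unit by a short manipulation of Bernoulli sums.

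First I would verify that the argument of Proposition~\ref{Prop_K_theory}(i) descends modulo $p^s$: the inclusion $J_{(p)}\subset p^t J+J^2$ in $\Lambda$ (visible on generators via $[b^{p^t}]-1\equiv p^t([b]-1)\pmod{J^2}$) combined with $p^t\overline{\mathcal{K}}=0$ (since $s\le t$ and $p^s\overline{\mathcal{K}}=0$) forces $J_{(p)}\overline{\mathcal{K}}\subset J^2\overline{\mathcal{K}}$. The surjection $\overline{\mathcal{K}}\twoheadrightarrow \mathcal{K}^{(p)}\otimes\Z/p^s\Z$ therefore induces an isomorphism
\[
J\overline{\mathcal{K}}/J^2\overline{\mathcal{K}}\xrightarrow{\sim} J^{(p)}(\mathcal{K}^{(p)}\otimes\Z/p^s\Z)/(J^{(p)})^2(\mathcal{K}^{(p)}\otimes\Z/p^s\Z).
\]
By Proposition~\ref{Prop_K_theory}(iii), $\mathcal{K}^{(p)}\otimes\Z/p^s\Z\simeq \Lambda^{(p)}/(p^s,\zeta^{(p)})$. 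Fix a generator $g$ of $P$ and set $x=[g]-1$, so $J^{(p)}=(x)$ and $J^{(p)}/(J^{(p)})^2\simeq \Z/p^t\Z$. The expansion $[a]\equiv 1+\log_g(a)\,x\pmod{x^2}$ gives
\[
\zeta^{(p)}\equiv -\tfrac{N-1}{6N}+S\cdot x \pmod{(J^{(p)})^2},\qquad S:=\sum_{a=1}^{N-1}B_2(a/N)\log_g(a)\in\Zp,
\]
with $-\tfrac{N-1}{6N}\in p^t\Zp^\times$. A short calculation (writing a general element of $(p^s,\zeta^{(p)})\cap J^{(p)}$ as $p^s a+b\zeta^{(p)}$, the augmentation constraint determining $\bar a$ in terms of $\bar b$) shows that the image of $J^{(p)}\cap(p^s,\zeta^{(p)})$ in $J^{(p)}/(J^{(p)})^2\simeq \Z/p^t\Z$ is the subgroup generated by $p^s$ and $S\bmod p^t$. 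Hence
\[
J\overline{\mathcal{K}}/J^2\overline{\mathcal{K}}\simeq\Z/p^{\min(s,v_p(S))}\Z,
\]
and the first claim of the lemma reduces to showing $\min(s,v_p(S))=v$.

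This is the heart of the argument. The surjections $\log$ and $\log_g$ (the latter composed with $\Z/p^t\Z\twoheadrightarrow \Z/p^s\Z$) factor through $P$ and differ by a unit in $\Z/p^s\Z$, so Merel's number equals $B:=\sum_{k=1}^{(N-1)/2}k\log_g(k)$ up to a unit, and $v=v_p(B\bmod p^s)$. I would prove the stronger identity $S\equiv -\tfrac{4}{3}B\pmod{p^t}$. Using $N\equiv 1\pmod{p^t}$ to simplify $B_2(a/N)\equiv a^2-a+\tfrac16\pmod{p^t}$, pairing $a\leftrightarrow N-a$, and using $\log_g(-1)=0$ (as $p$ is odd), one first obtains $S\equiv 2(A-B)\pmod{p^t}$ with $A:=\sum_{k=1}^{(N-1)/2}k^2\log_g(k)$. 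The identity $3A\equiv B\pmod{p^t}$ is then proved by substituting $k=(N-b)/2$ for odd $b\in[1,N-2]$ in
\[
A-2B=\sum_{k=1}^{(N-1)/2}k(k-2)\log_g(k),
\]
using $\log_g((N-b)/2)=\log_g(b)-\log_g(2)$, the vanishing $\sum_{b\text{ odd}}b^i\equiv 0\pmod{p^t}$ for $i\in\{0,1,2\}$ (each being a polynomial in $N-1$), and elementary evaluations $\sum_{b\text{ even}}b^i\log_g(b)\equiv(\text{linear combination of }A,B,C)\pmod{p^t}$ via $b=2k'$, with $C:=\sum_k\log_g(k)\equiv 0\pmod{p^t}$. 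The resulting congruence $A-2B\equiv -A/2-3B/2\pmod{p^t}$ yields $3A\equiv B\pmod{p^t}$. Since $-\tfrac{4}{3}\in\Zp^\times$ (as $p\geq 5$), $v_p(S)=v_p(B)$ in $\Z/p^t\Z$, giving $\min(s,v_p(S))=v$ as required.

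For the canonical isomorphism, the same calculation shows that the natural reduction $J\mathcal{K}/J^2\mathcal{K}\twoheadrightarrow J\overline{\mathcal{K}}/J^2\overline{\mathcal{K}}$ has kernel $p^v\cdot (J\mathcal{K}/J^2\mathcal{K})$; it therefore factors through $(J\mathcal{K}/J^2\mathcal{K})\otimes_{\Zp}\Z/p^v\Z$, where it becomes a surjection between two cyclic groups of the same order $p^v$, hence an isomorphism. The main obstacle is the elementary but bookkeeping-intensive identity $3A\equiv B\pmod{p^t}$; the computation is not conceptually deep but requires careful separation of odd/even contributions and repeated use of $p^t\mid N-1$.
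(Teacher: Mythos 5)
Your proposal is correct and follows essentially the same route as the paper's proof: reduce to $\overline{\mathcal{K}}^{(p)}$ via the norm, use the structure $\mathcal{K}^{(p)}\simeq\Lambda^{(p)}/(\zeta^{(p)})$ from Proposition \ref{Prop_K_theory} (\ref{Prop_K_theory_Lambda}), and identify the order of $\overline{J}^{(p)}/((\overline{J}^{(p)})^2+(\overline{\zeta}^{(p)}))$ with $p^v$ via the congruence $\sum_a \B_2(\tfrac{a}{N})\log(a)\equiv-\tfrac{4}{3}\sum_k k\log(k)$. The only difference is that you prove this last identity from scratch (your $3A\equiv B\pmod{p^t}$ computation checks out), whereas the paper simply cites it from \cite[Proposition 1.2]{Lecouturier_class_group}.
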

\begin{proof}
It suffices to prove the analogous statement where $\overline{\mathcal{K}}$ is replaced by $\overline{\mathcal{K}}^{(p)} := \mathcal{K}^{(p)} \otimes_{\Zp} \Z/p^s\Z$.

Recall that we have a isomorphism of $\Lambda^{(p)}$-modules $\mathcal{K}^{(p)}\simeq \Lambda^{(p)}/(\zeta^{(p)})$ by Proposition \ref{Prop_K_theory} (\ref{Prop_K_theory_Lambda}). Since the degree of $\zeta^{(p)}$ is $-\frac{N-1}{6N} \neq 0$, we have $J^{(p)} \cap (\zeta^{(p)}) = J^{(p)} \cdot (\zeta^{(p)})$. Thus, we have $$J^{(p)} \cdot \mathcal{K}^{(p)}/(J^{(p)})^2 \cdot \mathcal{K}^{(p)}\simeq J^{(p)}/((J^{(p)})^2+J^{(p)}\cdot (\zeta^{(p)})) \simeq \Lambda^{(p)}/(J^{(p)}+(\nu^{(p)})+(\zeta^{(p)}))\simeq \Z/p^t\Z$$
since the $p$-adic valuation of the degree of $\zeta^{(p)}$ and $\nu^{(p)}$ is $t$.

Since the degree of $\zeta^{(p)}$ is $0$ modulo $p^s$, we have $\overline{\zeta}^{(p)} \in \overline{J}^{(p)}$ where $\overline{\zeta}^{(p)}$ and $\overline{J}^{(p)}$ are the respective images of $\zeta^{(p)}$ and $J^{(p)}$ in $\overline{\Lambda}^{(p)}= \Lambda^{(p)} \otimes_{\Zp} \Z/p^s\Z$. Thus, we have:
$$J^{(p)}\cdot \overline{\mathcal{K}}^{(p)}/(J^{(p)})^2\cdot \overline{\mathcal{K}}^{(p)} \simeq \overline{J}^{(p)}/((\overline{J}^{(p)})^2+(\overline{\zeta}^{(p)})) \text{ .}$$
There is a group isomorphism $\overline{J}^{(p)}/(\overline{J}^{(p)})^2\xrightarrow{\sim} \Z/p^s\Z$ sending $\sum_{x \in (\Z/N\Z)^{\times}} \lambda_x \cdot [x]$ to $\sum_{x \in (\Z/N\Z)^{\times}} \lambda_x \cdot \log(x)$. The image of $\overline{\zeta}^{(p)}$ in $\Z/p^s\Z$ via this isomorphism is $\sum_{x \in (\Z/N\Z)^{\times}} \B_2(\frac{x}{N})\cdot \log(x)$. By \cite[Proposition 1.2]{Lecouturier_class_group}, we have 
$$\sum_{x \in (\Z/N\Z)^{\times}} \B_2(\frac{x}{N})\cdot \log(x) = -\frac{4}{3}\cdot \sum_{k=1}^{\frac{N-1}{2}} k \cdot \log(k) \text{ .}$$
This proves that $J^{(p)}\cdot \overline{\mathcal{K}}^{(p)}/(J^{(p)})^2\cdot \overline{\mathcal{K}}^{(p)}$ has order $p^v$, which concludes the proof of Lemma \ref{Lemma_structure_J_K_bar}.
\end{proof}

We first prove Theorem \ref{I/I^2_modulo_p}. 
\begin{proof}
As in section \ref{section_Sharifi_conj}, Theorem \ref{thm_Sharifi_mod_norms} shows that $\tilde{\varpi}^{(p)}$ induces a surjective group homomorphism
$$H^{(p)}_+/I_0\cdot H^{(p)}_+  \twoheadrightarrow J^{(p)}\cdot (\mathcal{K}^{(p)}/\nu^{(p)}\cdot \mathcal{K}^{(p)}) $$
and thus a surjective group homomorphism
$$I\cdot H_+/I^2\cdot H_+ = H^{(p)}_+/(I_0+J^{(p)})\cdot H^{(p)}_+  \twoheadrightarrow J^{(p)}\cdot (\mathcal{K}^{(p)}/\nu^{(p)}\cdot \mathcal{K}^{(p)})/(J^{(p)})^2\cdot (\mathcal{K}^{(p)}/\nu^{(p)}\cdot \mathcal{K}^{(p)}) \text{ .}$$
Since $s\leq t$, we have $\nu^{(p)}\cdot \overline{\mathcal{K}}^{(p)} \subset (J^{(p)})^2\cdot \overline{\mathcal{K}}^{(p)}$. We thus have a surjective map
$$I\cdot H_+/I^2\cdot H_+ \twoheadrightarrow J^{(p)}\cdot \overline{\mathcal{K}}^{(p)}/ (J^{(p)})^2\cdot \overline{\mathcal{K}}^{(p)} \text{ .}$$
Similarly as in Proposition \ref{Prop_K_theory} (\ref{Prop_K_theory_norm}), the norm map induces an isomorphism 
$$
J\cdot \overline{\mathcal{K}}/ J^2\cdot \overline{\mathcal{K}}  \xrightarrow{\sim} J^{(p)}\cdot \overline{\mathcal{K}}^{(p)}/ (J^{(p)})^2\cdot \overline{\mathcal{K}}^{(p)} \text{ .}
$$
We thus get a surjective map
$$\psi : I\cdot H_+/I^2\cdot H_+ \twoheadrightarrow J\cdot \overline{\mathcal{K}}/ J^2\cdot \overline{\mathcal{K}} \text{ .}$$
By Theorem \ref{Thm_structure_I_H_bar} and Lemma \ref{Lemma_structure_J_K_bar}, $\psi$ induces an isomorphism $\overline{\psi} : I\cdot \overline{H}_+/I^2\cdot \overline{H}_+ \xrightarrow{\sim} J\cdot \overline{\mathcal{K}}/J^2\cdot \overline{\mathcal{K}}$. The fact that $\overline{\psi}$ is given by the formula of Theorem \ref{I/I^2_modulo_p} is similar to the proof of Theorem \ref{thm_conj}.
\end{proof}

Under the assumption of Theorem \ref{thm_conj_2}, namely that $v=t$, we have the following commutative diagram whose vertical arrows are isomorphisms:
$$\xymatrix{
 I\cdot H_+/I^2\cdot H_+ \ar[r] \ar[d]^{\sim}  & J\cdot \mathcal{K}/J^2\cdot \mathcal{K} \ar[d]^{\sim}  \\
    I\cdot \overline{H}_+/I^2\cdot \overline{H}_+  \ar[r] & J\cdot \overline{\mathcal{K}}/J^2\cdot \overline{\mathcal{K}} }$$

Thus, Theorem \ref{thm_conj_2} is a consequence of Theorem \ref{I/I^2_modulo_p}.

\bibliography{biblio}
\bibliographystyle{plain}
\newpage
\end{document}